\documentclass[11pt]{article}
\textwidth=15cm
\setlength{\textheight}{21.5cm}
\setlength{\oddsidemargin}{.5cm}
\setlength{\evensidemargin}{-.5cm}
\setlength{\topmargin}{-.5cm}
\setlength{\abovedisplayskip}{3mm}
\setlength{\belowdisplayskip}{3mm}
\setlength{\abovedisplayshortskip}{3mm}
\setlength{\belowdisplayshortskip}{3mm}
\raggedbottom
\parskip=1.5mm
\pagestyle{myheadings} \markboth{Testing the maximal rank of the volatility process}
{Testing the maximal rank of the volatility process}

\usepackage{hyperref}
\usepackage{amsmath}
\usepackage{amsthm} 
\usepackage{amsfonts}
\usepackage{graphicx} 
\usepackage{latexsym}
\usepackage{url}
\usepackage{color}
\usepackage{caption} 
\usepackage{subcaption} 
\usepackage{graphicx}


\newcommand{\R}{\mathbb{R}}
\newcommand{\E}{\mathbb{E}}
\newcommand{\PP}{\mathbb{P}}


\newcommand{\al}{\alpha}

\newcommand{\ga}{\gamma}

\newcommand{\ep}{\varepsilon}

\newcommand{\om}{\omega}


\newcommand{\m}{\mathcal{M}}








\newcommand{\uu}{\underline{u}}


\def\nib{\noindent\bf}




\newcommand{\vsq}{\vskip 4mm}

\newcommand{\stab}{\stackrel{d_{st}}{\longrightarrow}}

\newcommand{\toop}{\stackrel{\PP}{\longrightarrow}}

\newcommand{\Var}{\mbox{\rm Var}}

\newcommand{\diag}{\mbox{\rm diag}}

\newcommand{\bee}{\begin{equation}}
\newcommand{\eee}{\end{equation}}
\newcommand{\bea}{\begin{eqnarray}}
\newcommand{\eea}{\end{eqnarray}}
\newcommand{\bean}{\begin{eqnarray*}}
\newcommand{\eean}{\end{eqnarray*}}

\renewcommand{\qed}{$\hfill\Box$}


\newcommand{\I}{\mathbf{I}}
\newcommand{\M}{\mathcal{M}}
\newcommand{\rank}{\mbox{\rm rank}}
\newcommand{\sgn}{\mbox{\rm sgn}}


\newtheorem{prop}{Proposition}[section]
\newtheorem{cor}[prop]{Corollary}

\newtheorem{lem}[prop]{Lemma}
\newtheorem{ex}[prop]{Example}
\newtheorem{theo}[prop]{Theorem}
\newtheorem{rem}[prop]{Remark}


\allowdisplaybreaks 

\begin{document}

\title{Testing the maximal rank of the volatility process for continuous diffusions observed with noise}
\author{ Tobias Fissler \thanks{University of Bern, Department of Mathematics and Statistics, Institute of Mathematical Statistics and Actuarial Science, Sidlerstrasse 5, 3012 Bern, Switzerland, Email: tobias.fissler@stat.unibe.ch} \thanks{This author is supported by Swiss National Science Foundation.} \and
Mark Podolskij \thanks{Department of Mathematics, University of Aarhus,
Ny Munkegade 118, 8000 Aarhus C,
Denmark, Email: mpodolskij@creates.au.dk.}}

\date{\today}

\maketitle

\begin{abstract}
In this paper, we present a test for the maximal rank of the volatility process in continuous diffusion models
observed with noise. Such models are typically applied in mathematical finance, where latent price processes are corrupted by
microstructure noise at ultra high frequencies. Using high frequency observations we construct a test statistic for the maximal
rank of the time varying stochastic volatility process. Our methodology is based upon a combination of a matrix perturbation approach 
and pre-averaging. We will show the asymptotic mixed normality of the test statistic and obtain a consistent testing procedure.

\ \

{\it Keywords}:  continuous It\^o semimartingales, high frequency data, microstructure noise, rank testing, stable convergence.\bigskip

{\it AMS 2010 Subject Classification:} 62M07, 60F05, 62E20, 60F17.

\end{abstract}

\section{Introduction}
\label{sec1}
\setcounter{equation}{0}
\renewcommand{\theequation}{\thesection.\arabic{equation}}

In the last twenty years, asymptotic theory for high frequency data has received a great deal of attention
in probability and statistics. This is mainly motivated by financial applications, where observations
of stock prices are recorded  very  frequently. In an ideal world, i.e. under no-arbitrage conditions, price processes must follow an It\^o
semimartingale, which is a celebrated result of Delbaen and Schachermayer \cite{DS94}. We refer to a monograph \cite{JP12} for a comprehensive
study of limit theorems for It\^o semimartingales and their manifold applications in statistics. 

Despite the aforementioned theoretical result, at ultra high frequencies, the financial
data is contaminated by {\it microstructure noise} such as rounding errors, bid-ask bounces and misprints. One of the standard models
for the microstructure noise is an additive i.i.d. process independent of the latent price (see e.g. \cite{BHLS08,ZMA05} among many others; an extension 
of this model can be found in \cite{JLMPV09}). More formally, the model is given as
\begin{align} \label{diffusion}
Y_{t_i} = X_{t_i} + \varepsilon_{t_i} \qquad \text{with} \qquad dX_t= b_t dt + \sigma_t dW_t, 
\end{align}   
where $(X_t)_{t\in [0,T]}$ is a $d$-dimensional continuous It\^o semimartingale, $t_i=i\Delta_n$ and $(\varepsilon_t)_{t\in [0,T]}$ is a 
$d$-dimensional i.i.d process
independent  of $X$ with  
\begin{align} \label{noise}
\E[\varepsilon_t]=0 \quad \text{and} \quad \E[\varepsilon_t \varepsilon_t^{\star}]=:\Sigma \in \R^{d \times d}. 
\end{align}
We are in the framework of infill asymptotics, i.e. $\Delta_n\rightarrow 0$ while $T$ remains fixed. This paper
is devoted to the test for the maximal rank of the co-volatility matrix $c_t=\sigma_t\sigma_t^{\star}$ of the unobserved 
diffusion process $X$. We remark that this is an equivalent formulation of the following problem: What is the minimal amount of independent
Brownian motions required for modeling the $d$-dimensional diffusion $X$? Answering this question might give a direct economical interpretation
of the financial data at hand. Furthermore, testing for the full rank of $c_t$ is connected to testing for completeness of financial markets.   

In a recent paper \cite{JP13}, the described statistical problem has been solved in a continuous diffusion setting without noise (we also 
refer to an earlier article \cite{JLT08} for a related problem). The main idea is based upon a  matrix perturbation method, which helps to identify
the rank of a given matrix. The maximal rank of the stochastic co-volatility process $(c_t)_{t\in [0,T]}$ is then asymptotically identified via 
a certain ratio statistic, which uses the scaling property of a Brownian motion. Clearly, the test statistic becomes invalid in the framework
of continuous diffusion models observed with noise. To overcome this problem we apply the pre-averaging approach, which has been originally proposed in 
\cite{JLMPV09,PV09}. As the name suggests,  weighted averages of increments of the process $Y$ are built over a certain window in order to eliminate
the influence of the noise to some extent. This in turn gives the possibility to infer the co-volatility process $(c_t)_{t\in [0,T]}$. The size of the 
pre-averaging window $k_n$
is typically chosen as $k_n=O(\Delta_n^{-1/2})$ and objects as the integrated co-volatility $\int_0^T c_t dt$ can be estimated with the convergence rate
of $\Delta_n^{-1/4}$, which is known to be optimal.

At this stage, we would like to stress that combining the pre-averaging approach and the matrix perturbation method is by far not trivial. There are mainly
two problems that need to be solved. First of all, when using the optimal window size of $k_n=O(\Delta_n^{-1/2})$ in the pre-averaging approach, the diffusion
and the noise parts have the same order, and it becomes virtually impossible to distinguish the rank of the co-volatility from the unknown rank of the covariance 
matrix $\Sigma$. Hence, we will choose a proper sub-optimal window size to still obtain a reasonable convergence rate for the test statistic. The second
and more severe problem is that the ratio statistic proposed in \cite{JP13} heavily relies on the scaling property of a Brownian motion. This scaling property is not shared by an i.i.d. noise process introduced in \eqref{noise}. Thus, a much deeper probabilistic analysis of the main statistic is required
to come up with a valid testing procedure.

The paper is organized as follows. Section \ref{sec2} gives the probabilistic description of the model, presents the main assumptions 
and defines the testing hypotheses. The background on matrix perturbation and pre-averaging method is demonstrated in Section \ref{sec3}.
Section \ref{sec4} presents the main results of the paper. 
Section \ref{sec6} is concerned with a simulation study.
All proofs are collected in Section \ref{sec5}.

\section{The setting and main assumptions}
\label{sec2}
\setcounter{equation}{0}
\renewcommand{\theequation}{\thesection.\arabic{equation}}
We start with a filtered probability space $(\Omega, \mathcal F, (\mathcal F_t)_{t\in [0,T]}, \mathbb P)$, on which all stochastic processes
are defined. As indicated at \eqref{diffusion} we observe the $d$-dimensional process $Y=X+\varepsilon$ at time points $i\Delta_n$, $i=0,1,\ldots, [T/\Delta_n]$.
The process $X$ is given via
\begin{align} \label{X}
X_t= X_0+ \int_0^t b_s ds + \int_0^t \sigma_s dW_s,
\end{align}
where $(b_t)_{t\in [0,T]}$ is a $d$-dimensional drift process, $(\sigma_t)_{t\in [0,T]}$ is a $\R^{d \times q}$-valued volatility process and $W$ denotes
a $q$-dimensional Brownian motion. We introduce the notation 
\begin{align*}
c_t=\sigma_t\sigma_t^{\star},\qquad r_t=\rank(c_t),\qquad
R_t=\sup_{s\in[0,t)}\,r_s.
\end{align*}
We need more structural assumptions on the processes $b$ and $\sigma$. 
\vsq

\nib Assumption (A): \rm The processes $b$ and $\sigma$ have the form
\begin{align}
\sigma _t&=\sigma_0+ \int_0^t a_s ds + \int_0^t v_s dW_s, \nonumber\\[1.5 ex]
\label{bsigma} b_t&=b_0+ \int_0^t a'_s ds + \int_0^t v'_s dW_s, \\[1.5 ex]
v_t &=v_0+ \int_0^t a''_s ds + \int_0^t v''_s dW_s, \nonumber
\end{align}
where  $b_t$ and $a'_t$ are
$\R^d$-valued, $\sigma_t$, $a_t$ and $v'_t$ are
$\R^{d\times q}$-valued, $v_t$ and $a''_t$ are $\R^{d\times q\times q}$-valued,
and $v''_t$ is $\R^{d\times q\times q\times q}$-valued, all those processes
being adapted. Finally, the processes
$a_t,v'_t,v''_t$ are c\`adl\`ag and the processes $a'_t,a''_t$
are locally bounded.\qed
\vsq

Notice that (A) is exactly the same assumption, which has been imposed in \cite{JP13}. We remark that, by enlarging the dimension $q$
of the Brownian motion $W$ if necessary, we may assume without loss of generality that all processes $X,b,\sigma,v$ are driven by the same Brownian motion.
In the framework of a stochastic differential equation, i.e. when $b_t=h_1(X_t)$ and $\sigma_t=h_2(X_t)$, assumption (A) is automatically satisfied whenever
$h_1\in C^2(\R)$ and $h_2\in C^4(\R)$ (due to It\^o's formula). We also remark that assumption (A) is rather unusual in the literature. Indeed, for
classical high frequency statistics, such as e.g. power variations (cf. \cite{BGJPS06}), only the first line of \eqref{bsigma} is required. However, 
when $R_T<d$ our test statistic, which will be introduced in Section \ref{sec4}, turns out to be degenerate and, in contrast to classical cases, 
we require a higher order stochastic expansion of the increments of $X$. This explains the role of the second and third line of \eqref{bsigma}. Finally, we specify our assumptions on the noise process $\ep$ introduced at \eqref{noise}.
\vsq

\nib Assumption (E): \rm The i.i.d. process $(\ep)_{t\in[0,T]}$ is $(\mathcal F_t)$-adapted and independent of $a$, $a'$, $a''$, $v'$, $v''$, $W$, hence also independent
of $b$, $\sigma$ and $X$. Furthermore, it is Gaussian, meaning that $\ep_t \sim \mathcal N_d(0, \Sigma)$ and $\E[ \ep_s \ep_t^\star]=0$ for all $s, t \in [0,T]$ with $s\neq t$.
\qed
\vsq 
\begin{rem}\rm
Theoretically, we could discuss a more general structure of the noise. In particular, we could give up the assumption of the Gaussianity. What we really require is the mutual independence of the noise at different times as well as the existence of the moments up to a certain order. Also  the independence assumption 
between the noise $\ep$ and the semimartingale $X$ could be generalized; see e.g. \cite{JLMPV09, PV09b} for an exposition of the details.
\qed
\end{rem}
Now, for any $r\in \{0,1,\ldots, d\}$, we introduce the following subsets of $\Omega$:
\begin{align} \label{omegar}
\Omega_T^r := \{\omega \in \Omega:~R_T(\omega)=r\}, \qquad \Omega_T^{\leq r} := \{\omega \in \Omega:~R_T(\omega)\leq r\}.
\end{align} 
Notice that the sets $\Omega_T^r$ and $\Omega_T^{\leq r}$ are indeed $\mathcal F_T$-measurable. This can be justified as follows. 
The rank $r_t$ is the biggest integer $r\leq d$ such that the sum of the
determinants of the matrices $(c_t^{ij})_{i,j\in J}$, where $J$ runs through
all subsets of $\{1,\ldots,d\}$ with $r$ points, is positive; see e.g. \cite[Lemma 3]{JLT08}. Since the mapping $t\mapsto c_t$ is
continuous by assumption (A), this implies that for any $r$ the random set
$\{t: r_t(\om)> r\}$ is open in $[0,T)$, so the mapping $t\mapsto r_t$ is lower
semi-continuous. The very same argument proves that the random set
\[
\{t\in [0,T):~R_T(\omega)=r_t(\omega)\} 
\]    
is non-empty and open for each $\omega \in \Omega$. Hence, this set has a positive Lebesgue measure, which helps to statistically 
identify the maximal rank $R_T$ (in contrast to lower ranks $r_t<R_T$, which might be attained at a single point on the interval $[0,T]$).

The following discussion is devoted to testing the null hypothesis $H_0:~R_T=r$ against the alternative $H_1:~R_T \not =r$ (or $H_0:~R_T\leq r$
against $H_1:~R_T >r$). Notice that this a \textit{pathwise hypothesis}, since we test whether a given path $\omega$ belongs to $\Omega_T^r$
(or $\Omega_T^{\leq r}$)
or not. It is in general impossible to know whether this hypothesis holds for another path $\omega'\in \Omega$.

\section{Matrix perturbation and pre-averaging approach}
\label{sec3}
\setcounter{equation}{0}
\renewcommand{\theequation}{\thesection.\arabic{equation}}

\subsection{Matrix perturbation method} \label{sec3.1}
The matrix perturbation method is a numerical approach to the computation of the rank of a given
matrix. It has been introduced in \cite{JP13} in the context of rank testing. 
To explain the main idea of our method, we need to introduce
some notation. Recall that $d$
and $q$ are the dimensions of $X$ and $W$, respectively. Let $\m$
denote the set of all $d\times d$ matrices and $\m_r$,  
$r\in\{0,\ldots,d\}$, the set of all matrices in $\m$ with rank $r$. Furthermore,
let $\m'$ be the set of all $d\times q$ matrices. For any matrix $A$
we denote by $A_i$ the $i$th column of $A$; for any vectors
$x_1,\ldots,x_d$  in $\R^d$, we write mat$(x_1,\cdots,x_d)$ for the
matrix in $\m$ whose $i$th column is the column vector $x_i$. For
$r\in\{0,\ldots,d\}$ and $A,B\in\m$ we define the quantity
\begin{align*} 
\m^r_{A,B} :=~ \big\{ G\in \m: \, G_i=A_i ~\text{or}~ G_i=B_i
~\text{with}~\#\{i:\,G_i=A_i\}=r\big\}.
\end{align*}
In other words,  $\m^r_{A,B}$ is the set
of all matrices $G\in\m$ with $r$ columns equal to those of $A$
and the remaining $d-r$ ones equal to those of $B$ (all of them being at their original places).
We also  define the number
\begin{align} \label{gamma}
\ga_r(A,B):=\sum_{G\in\m^r_{A,B}}\det(G).
\end{align}
We demonstrate the main ideas of the matrix perturbation approach for a deterministic problem first. Let
$A\in \m$ be an unknown matrix with rank $r$. Assume that, although
$A$ is unknown, we have a way of computing $\det (A+\lambda B)$ for all $\lambda>0$ and
some given matrix $B\in\m_d$. The multi-linearity property of the
determinant implies the following asymptotic expansion
\begin{align} \label{detexp}
\det(A+\lambda B) = \lambda^{d-r} \ga_r(A,B) + O(\lambda^{d-r+1}) \qquad \text{as} \quad \lambda\downarrow 0.
\end{align}
This expansion is the key to identification of the unknown rank $r$.
Indeed,  when $\ga_r(A,B) \not = 0$ we deduce that 
\begin{align} \label{detconv}
\frac{\det (A+2\lambda B)}{\det (A+\lambda B)} \rightarrow 2^{d-r} \qquad \text{as}
\quad \lambda\downarrow 0.
\end{align}
However, it is
impossible to choose a matrix $B\in \m$ which guarantees $\ga_r(A,B) \not = 0$
for all $A\in\m_r$. To solve this problem, we can use a \textit{random} perturbation.
As it has been shown in \cite{JP13}, for any $A\in\m_r$ we have $\ga_r(A,B) \not = 0$ almost surely 
when $B$ is the random matrix whose entries are independent  standard normal (in fact, the random variable $\ga_r(A,B)$
has a Lebesgue density). This is intuitively clear, because the multivariate standard normal distribution does not prefer directions.
It is exactly  this idea which will be the core of our testing procedure.

\subsection{Pre-averaging approach}  \label{sec3.2}
In this subsection, we briefly introduce the pre-averaging method; we refer to e.g. \cite{JLMPV09,PV09} for a more detailed exposition. 

Let $g:[0,1]\rightarrow \R$ be a weight function with $g(0)=g(1)=0$, which is continuous, piecewise $C^1$ with piecewise Lipschitz derivative $g'$ 
and $\int_0^1 g^2(x)dx>0$. A canonical choice of such a function is given by $g(x)= \min(x,1-x)$; see \cite{JLMPV09} for its interpretation.
Now, let $(k_n)_{n\ge1}$ be a sequence of positive integers representing the window size such that $k_n\rightarrow \infty$ and $u_n:=k_n \Delta_n\rightarrow 0$. For any stochastic process
$V$, we define the pre-averaged increments via
\begin{align} \label{prereturn}
\overline{V}_i^n := \sum_{j=1}^{k_n-1} g \left(\frac{j}{k_n} \right) \Delta_{i+j}^n V = - \sum_{j=0}^{k_n-1} 
\left( g \left(\frac{j+1}{k_n} \right) -g \left(\frac{j}{k_n} \right) \right) V_{(i+j)\Delta_n},  
\end{align} 
where $\Delta_i^n V:= V_{i\Delta_n} - V_{(i-1)\Delta_n}$. Roughly speaking, this local averaging procedure reduces the influence of the noise process
when we apply it to the noisy diffusion process $Y$ defined at \eqref{diffusion}. Indeed, we may show that 
\[
\overline{X}_i^n = O_{\mathbb P} (\sqrt{k_n \Delta_n}) \qquad \text{and} \qquad \overline{\varepsilon}_i^n = O_{\mathbb P} (\sqrt{1/k_n}),  
\]
where the first approximation is essentially justified by the independence of the increments of $W$ and the first identity of \eqref{prereturn},
and the second approximation follows from the i.i.d. structure of the noise process and the second identity of \eqref{prereturn}. We clearly
see that a large $k_n$ increases the influence of the diffusion part $X$ and diminishes the influence of the noise part $\varepsilon$. However, 
in standard statistical problems, e.g. estimation of quadratic variation, the optimal rate of convergence is obtained when the contributions 
of both terms  are balanced. This results in the choice of the window size $k_n$ with $k_n \sqrt{\Delta_n} = \theta + o(\Delta_n^{1/4})$, where $\theta \in(0,\infty)$. 
With this window size we deduce for instance that
\[
\sqrt{\Delta_n} \sum_{i=0}^{[T/\Delta_n] - k_n+1} (\overline{Y}_i^n) (\overline{Y}_i^n)^{\star} \toop \theta \psi_2 \int_0^T c_t dt +
\theta^{-1} \psi_1 T\Sigma , 
\]
where the constants $\psi_1$ and $\psi_2$ are defined by
\begin{align*}
\psi_1:= \int_0^1 (g'(x))^2 dx, \qquad \psi_2:= \int_0^1 g^2(x) dx, 
\end{align*} 
(cf. \cite{JLMPV09}). The bias can be corrected via
\[
C_t^n = \frac{\sqrt{\Delta_n}}{\theta \psi_2} \sum_{i=0}^{[T/\Delta_n] - k_n+1} (\overline{Y}_i^n) (\overline{Y}_i^n)^{\star} 
- \frac{\psi_1 \Delta_n}{2\theta^2 \psi_2} \sum_{i=0}^{[T/\Delta_n]} (\Delta_i^n Y) (\Delta_i^n Y)^{\star} \toop \int_0^T c_t dt,
\]
and the statistic $C_t^n$ becomes a consistent estimator of the quadratic covariation of $X$ with convergence rate $\Delta_n^{-1/4}$.
This rate is known to be optimal.

As explained in the introduction, the optimal choice of the window size $k_n$ as introduced above would not lead to a feasible testing
procedure for the maximal rank $R_T$. Due to the complex structure of the test statistic, which will be introduced in Section \ref{sec4},
there is no de-biasing procedure as above (when there are no further restrictions on the rank of the covariance matrix $\Sigma$). For this
reason we introduce the following window size $k_n$:
\begin{align} \label{kn}
k_n \Delta_n^{2/3} = \theta + o(\Delta_n^{1/6}), \quad \theta \in (0,\infty), \qquad u_n: =k_n\Delta_n.
\end{align}  
Within the framework of our test statistic, this choice of $k_n$ leads to an optimal rate of convergence, which becomes $\Delta_n^{-1/6}$
(although better rates of convergence are theoretically possible when using alternative test statistics). 
We show the intuition behind this choice in the next section. We remark that
an easier choice of the window
size would be $k_n= O(\Delta_n^{-3/4})$, which would completely eliminate the influence of the noise process on the central limit theorem.
However, this would lead to a slower rate of convergence $\Delta_n^{-1/8}$. For this reason we dispense with the exact exposition of this case.

\section{Main results}
\label{sec4}
\setcounter{equation}{0}
\renewcommand{\theequation}{\thesection.\arabic{equation}}

\subsection{Test statistic} \label{sec4.1}
In this subsection, we introduce a random perturbation of the original data and define the main statistics. 
Following the basic ideas of \cite{JP13} and the motivation of Subsection \ref{sec3.1}, we define a  $d$-dimensional `perturbation' process $X'$ by
\begin{align*}
	X'_t= \widetilde{\sigma} W'_t,
\end{align*}
where $\widetilde{\sigma} $ is a positive definite deterministic $d \times d$ matrix and $W'$ is a $d$-dimensional Brownian motion. Without loss of generality, we may assume that $W'$ is also defined on the filtered probability space
$(\Omega, \mathcal F, (\mathcal F_t)_{t\in [0,T]}, \mathbb P)$. Let $\mathcal G \subset \mathcal F$ be the 
sub-$\sigma$-algebra, which is generated by all processes appearing in (A) and by the noise process $\ep$. We assume that $W'$ is independent of $\mathcal G$.
 Now,  we use $X'$ to define the perturbed process
\begin{align}\label{Z}
	Z_t^{n,\kappa}:= Y_t + \sqrt{\kappa u_n} X'_t,
\end{align}
where $\kappa=1,2$ and the sequence $u_n$ is defined at \eqref{kn}. In some sense, the perturbation process $X'_t$ plays the role of the random perturbation
matrix $B$ introduced after \eqref{detconv}. As we will see below, our two main statistics will be constructed at two different frequencies $\Delta_n$
and $2\Delta_n$, which will be indicated by the constant $\kappa=1,2$. 

Recall the definition of the pre-averaged quantity $\overline{V}_i^n$ introduced in \eqref{prereturn} for a stochastic process $V$. We sometimes write $\overline{V}(g)_i^n$
instead of $\overline{V}_i^n$ if we want to stress the dependency of the term $\overline{V}_i^n$ on the weight function $g$. Furthermore, we use
the notation $\overline{V}(g)_i^{n,\kappa}$ to indicate that the quantity $\overline{V}(g)_i^n$ is built using frequency $\kappa \Delta_n$ with $\kappa=1,2$, i.e.
\begin{equation}\label{eqn: V(g)}
\overline{V}(g)_i^{n,\kappa}=  \sum_{j=1}^{k_n-1} g \left(\frac{j}{k_n} \right) (V_{(i+\kappa j)\Delta_n} - V_{(i+\kappa (j-1))\Delta_n}). 
\end{equation}
If $V = Z^{n,\kappa}$ defined at \eqref{Z}, we will slightly abuse the notation introduced in \eqref{eqn: V(g)} and use the convention $\overline Z(g)_i^{n,\kappa} := \overline{Z^{n,\kappa}}(g)_i^{n,\kappa}$.
Now, we define our main test statistics via
\begin{align} \label{main-statistic}
	S(g)_T^{n,\kappa}
	=3d u_n \sum_{i=0}^{[T/3d u_n]-1}
	f\left(\left({\overline Z(g)^{n,\kappa}_{((3i+\kappa-1)d+\kappa (j-1))k_n}}\big/{\sqrt{\kappa u_n}}\right)_{j=1,\ldots, d}\right),
\end{align}
for $\kappa=1,2$ with the test function $f$ on $(\R^d)^d$ given as
\begin{align}\label{f}
	f(x_1, \ldots, x_d):= \text{det}(\text{mat}(x_1,\ldots,x_d))^2.
\end{align}
Note that the summands in \eqref{main-statistic} use non-overlapping  increments of the process $Z^{n,\kappa}$, and also the statistics $Z^{n,1}$
and $Z^{n,2}$ are based on distinct increments.

\begin{rem} \label{rem1} \rm
The statistic $S(g)_T^{n,\kappa}$ is similar in spirit to the one introduced in \cite{JP13}, where a $d$-dimensional
continuous It\^o semimartingale without noise has been considered. Therein, the statistics $S_T^{n,\kappa}$
defined in \cite[Equation (2.13)]{JP13}, which use the raw increments instead of pre-averaged ones, satisfy the following law of large numbers
\[
\frac{S_T^{n,2}}{S_T^{n,1}} \toop 2^{d-R_T}.
\] 
This should be compared with the motivation described at \eqref{detconv}. The latter convergence asymptotically identifies the maximal rank
$R_T$. The crucial difference to our framework is that this convergence is no longer valid when we use the statistics $S(g)_T^{n,\kappa}$
introduced in \eqref{main-statistic}. It relies on the fact that the noise process $\varepsilon$ does not have the scaling property 
of the driving Brownian motion $W$. To overcome this issue, we will not only use different frequencies $\Delta_n$ and $2\Delta_n$, but also two different
weight functions $g$ and $h$, which are connected through certain identities. For this purpose a very thorough analysis of the asymptotic behaviour 
of $S(g)_T^{n,\kappa}$ is required. \qed  
\end{rem}

\begin{rem} \label{rem2} \rm
Let us explain the choice of the window size $k_n$ introduced at \eqref{kn} and the perturbation rate $\sqrt{\kappa u_n}$. Under assumptions (A) and (E)
we will prove the following asymptotic decomposition for $i=0,\ldots, [T/3d u_n]-1$
\begin{align}
\label{stoch decomp}
&\frac{1}{\sqrt{\kappa u_n}} \text{mat} \left(\overline Z(g)^{n,\kappa}_{(3i + \kappa - 1)dk_n}, \cdots, \overline Z(g)^{n,\kappa}_{((3i + \kappa - 1)d+\kappa(d-1))k_n}\right)\\[1.5 ex]
& = A(g)_{i}^{n,\kappa}
+ \sqrt{\kappa u_n} \left(B(1,g)_{i}^{n,\kappa} + B(2,g)_{i}^{n,\kappa} + B(3,g)_{i}^{n,\kappa} \right) + \kappa u_n C(g)_{i}^{n,\kappa} + \kappa u_n D(g)_{i}^{n,\kappa},  \nonumber
\end{align}
where the $\R^{d\times d}$-valued sequences $ A(g)_{i}^{n,\kappa}, C(g)_{i}^{n,\kappa}, D(g)_{i}^{n,\kappa}$ and $B(g)_i^{n,\kappa} := B(1,g)_{i}^{n,\kappa} + B(2,g)_{i}^{n,\kappa} + B(3,g)_{i}^{n,\kappa}$ are tight. The matrix $A(g)_{i}^{n,\kappa}$,
which is the dominating term in the expansion, is defined by
\begin{align*}
A(g)_{i}^{n,\kappa}= 
\frac{\sigma_{(3i+\kappa-1)du_n}}{\sqrt{\kappa u_n}} \text{mat} \left(\overline W(g)^{n,\kappa}_{(3i + \kappa - 1)dk_n}, \cdots, \overline W(g)^{n,\kappa}_{((3i + \kappa - 1)d+\kappa(d-1))k_n}\right),
\end{align*}
while $B(1,g)_{i}^{n,\kappa}$ depends on $b,v$ introduced in \eqref{bsigma}, $B(2,g)_{i}^{n,\kappa}$ comes solely from the perturbation $X'$
and $B(3,g)_{i}^{n,\kappa}$ is associated with the noise process $\varepsilon$ (the third order term $C(g)_{i}^{n,\kappa}$ is connected to $a,v',v''$
and the term $D(g)_{i}^{n,\kappa}$ depends on $a, a', a'', v', v''$,  defined in \eqref{bsigma}). Since $\text{det}(A(g)_{i}^{n,\kappa})=0$ whenever $R_T<d$, our statistic $S(g)_T^{n,\kappa}$ is degenerate in the sense
that the second order term enters the law of large numbers. At this stage, we realize that the choice of the window size $k_n=O(\Delta_n^{-2/3})$
and the perturbation rate $\sqrt{\kappa u_n}$ creates a balance between the second order term $B(g)_i^{n,\kappa}$ in the stochastic expansion coming from the diffusion process,
the noise process $\varepsilon$ and the perturbation process $X'$. The classical choice $k_n=O(\Delta_n^{-1/2})$ would make the noise part one of the dominating
terms, but in this case, the estimation of the maximal rank $R_T$ would be virtually impossible since we impose no assumptions on the covariance matrix $\Sigma$
of the noise. On the other hand, when $k_n=O(\Delta_n^{-3/4})$ the noise part would enter the third order term and thus would not influence the limit theory.
Although the asymptotic results become much easier in the latter case, the convergence rate gets rather low ($\Delta_n^{-1/8}$). Hence, within the framework
of our test statistic, the choice $k_n=O(\Delta_n^{-2/3})$ meets the balance between feasibility of the testing procedure and the optimal rate of convergence.

Clearly, $B(g)_{i}^{n,\kappa}$ plays the role of the perturbation matrix $B$ defined in section \ref{sec3.1}
while $\lambda=\sqrt{\kappa u_n}$. Since it is impossible to guarantee that the matrices $B(1,g)_{i}^{n,\kappa}$ and $B(3,g)_{i}^{n,\kappa}$ have full rank, we require the 
presence of the matrix $B(2,g)_{i}^{n,\kappa}$ to insure almost sure invertibility of the sum. Thus, the perturbation process $X'$ plays the role of regularization.
\qed 
\end{rem}

\subsection{Notation}\label{sec: Notation}
In order to state the limit theory for the statistics $S(g)_T^{n,\kappa}$, we need to introduce some notation.
For any weight function $g$ we define the quantities
\begin{align}\label{psi}
&\psi_1(g)= \int_0^1 (g'(x))^2 dx,  &&\psi_2(g)= \int_0^1 g^2(x) dx, \\
&\psi_3(g)= \int_0^1 g(x) dx,  &&\psi_4(g)= \int_0^1 xg^2(x) dx. \nonumber
\end{align}
For $r\in\{0,1,\ldots, d\}$, we define the function $F_r$ on $(\R^{2d})^d$ by
\begin{align}\label{F_r}
F_{r}(v_1,\ldots,v_d)=
	\gamma_r\left(\text{mat}(x_1,\ldots,x_d),\text{mat}(y_1,\ldots,y_d)
\right)^2,
\qquad  v_j=\begin{pmatrix} x_j \\ y_j \end{pmatrix}
\in \R^{2d},
\end{align}
where $\gamma_r$ was introduced at \eqref{gamma}.
Let $\overline W$ and $\overline W'$ be Brownian motions of dimension $q$ and $d$, respectively, and let $\overline\Theta=(\overline\Theta_i)_{i\ge1}$ be a i.i.d. sequences of $d$-dimensional standard normal random variables. $\overline W$, $\overline W'$, and $\overline\Theta$ are defined on some filtered probability space $(\overline\Omega, \overline{\mathcal{F}}, (\overline{\mathcal{F}}_t)_{t\ge0}, \overline{\mathbb P})$ and are assumed to be independent. Let $\mathcal M^{\ge0}$ be the space of all symmetric positive-semidefinite matrices $\varphi\in\mathcal M$. We introduce the space
$\mathcal U=\mathcal M' \times \mathcal M \times \R^{dq^2} \times \R^d \times \mathcal M^{\ge0}$, and let $\underline u=(\alpha,\beta,\gamma, a, \varphi)\in \mathcal U$. By $\varphi^{1/2}\in\mathcal M$ we denote the matrix root of $\varphi$.

Now, for $\kappa=1,2$, we define the $2d$-dimensional variables (explicitly writing the components with $l\in\{1,\ldots, d\}$)
\begin{align}
\label{Psi1}
	&\Psi(\underline u, g, \kappa)^l_j
	=\frac{1}{\sqrt \kappa} \sum_{m=1}^q \alpha^{lm}
	\int_{\kappa(j-1)}^{ \kappa j}  g(s/\kappa -  (j-1)) \, d\overline W^m_{s},\\[1.5ex]
\label{Psi2}
	&\Psi(\underline u, g, \kappa)^{d+l}_j
	= \frac{1}{\kappa} a^l \int_{\kappa(j-1)}^{ \kappa j}  g(s/\kappa-  (j-1)) \, ds  \\[1.5ex]
	&\qquad+ \frac{1}{\kappa} \sum_{m,k=1}^q \gamma^{lkm} \int_{\kappa(j-1)}^{ \kappa j} 
	g(s/\kappa- (j-1)) \,\overline W^k_s d\overline W^m_s\nonumber \\[1.5ex]
	&\qquad +\frac{1}{\sqrt \kappa} \sum_{m=1}^d \beta^{lm}
	\int_{\kappa(j-1)}^{ \kappa j}  g(s/\kappa- (j-1)) \, d\overline W'^m_{s} \nonumber \\[1.5ex]
	&\qquad+\frac{1}{ \kappa} \left( \frac{\psi_1(g)}{\theta^3}\right)^{1/2} \sum_{m=1}^d \left(\varphi^{1/2}\right)^{lm}\,\overline\Theta_{\kappa j}^m. \nonumber 
\end{align}
Some explanations are in order to understand these definitions. 

\begin{rem} \label{rem3} \rm
To get an intuition for the notation we remark that the components of $\underline u\in\mathcal U$ account for the processes in assumption (A) that will appear in the limit. This means that $\alpha$ is related to $\sigma_t$, $\beta$ to $\widetilde{\sigma}$, $\gamma$ to $v_t$ and $a$ to $b_t$. Finally, $\varphi$  accounts for the covariance structure of the noise and is associated with $\Sigma$.
As motivated above we use different rates in our procedure. Therefore, we also have to define the limit for the two cases $\kappa=1,2$. \qed
\end{rem}
\begin{rem}  \rm
Note that the random-vectors $\Psi(\uu,g,\kappa)_i$ and $\Psi(\uu,g,\kappa)_j$ are uncorrelated whenever $i\neq j$.
\qed
\end{rem}
Using the notation at \eqref{F_r}, we define for a weight function $g$, $\underline u 
=(\alpha,\beta,\gamma, a, \varphi)\in \mathcal U$ and $\kappa=1,2$ the real-valued random variables
\begin{align*} 
\overline F_{r}(\underline u, g, \kappa)
=F_{r}\left(\Psi(\underline u, g,\kappa)_1,\ldots,
\Psi(\underline u, g,\kappa)_d\right),
\end{align*}
and set
\begin{align}
\label{Gammar}
\Gamma_{r}(\underline u, g, \kappa)
	&=\overline\E\left[\overline F_{r}(\underline u,  g, \kappa)\right], \\[1.5 ex]
\Gamma'_{r}(\underline u, g, \kappa)
	&=\overline\E\left[\overline F_{r}(\underline u, g, \kappa)^2\right]
	-\Gamma_{r}(\underline u,  g, \kappa)^2 \nonumber. 
\end{align}
\begin{rem}  \rm
Under the special assumption that $\varphi=0$ (which corresponds to the situation without noise), the sequences $(\Psi(\uu,g,1)_j)_{j\ge1}$ and $(\Psi(\uu,g,2)_j)_{j\ge1}$ have the same global law which implies also that $\Gamma_{r}(\underline u, g, 1) = \Gamma_{r}(\underline u, g, 2)$ and $\Gamma'_{r}(\underline u, g, 1) = \Gamma'_{r}(\underline u, g, 2)$. This is not the case when $\varphi \neq0$. Proposition \ref{prop1} will demonstrate under which conditions one can find another weight function $h$ such that $\Gamma_{r}(\underline u, g, 1) = \Gamma_{r}(\underline u, h, 2)$ and $\Gamma'_{r}(\underline u, g, 1) = \Gamma'_{r}(\underline u, h, 2)$ even in the general situation that $\varphi \neq0$. \qed
\end{rem}

\begin{rem} \label{rem4} \rm
We have introduced the random variables $\Psi(\underline u, g, \kappa)_j$ only for weight functions, implying that $g$ is continuous and piecewise $C^1$ with a piecewise Lipschitz derivative $g'$. As a matter of fact, we will often work with a discretized version $g^n$ of $g$ defined as
\begin{align}\label{g^n}
g^n(s) := \sum_{i=1}^{k_n-1} g\left(\frac{i}{k_n} \right) \mathbf{1}_{\left(\frac{i-1}{k_n}, \frac{i}{k_n}\right]}(s).
\end{align}
Note that $g^n(0) = g^n(1) = 0$ and that $g^n$ converges to $g$ uniformly on $[0,1]$. By definition, $g^n$ fails to be a weight function as it is not continuous. Nevertheless, the integrals $\int_0^1g^n(s)ds$, $\int_0^1g^n(s)d\overline W^m_s$ and $\int_0^1g^n(s)\overline W^k_s d\overline W^m_s$ still make sense. This corresponds to the fact that $\psi_l(g^n)$ introduced at \eqref{psi} is well-defined for $l=2,3,4$. Moreover, we have by a Riemann approximation argument that
\[
\psi_l(g^n) = \psi_l(g) + O(k_n^{-1}), \quad l=2,3,4.
\]
For $\psi_1(g^n)$, we must approximate the derivative and set
\begin{align}\label{psi_1(gn)}
\psi_1(g^n) := \frac{1}{k_n} \sum_{i=0}^{k_n-1} \left(\frac{g\left(\frac{i+1}{k_n} \right) - g\left(\frac{i}{k_n} \right)}{1/k_n}\right)^2 
= \psi_1(g) + O(k_n^{-1}),
\end{align}
where the second identity follows again by a Riemann approximation argument.
With this convention, we can extend the notation and write $\Psi(\underline u, g^n, \kappa)_j$, $\overline F_{r}(\underline u,  g^n, \kappa)$, $\Gamma_{r}(\underline u, g^n, \kappa)$ and $\Gamma'_{r}(\underline u, g^n, \kappa)$, respectively. \qed
\end{rem}

\subsection{Law of large numbers}
In this subsection, we present the law of large numbers for the statistic $S(g)_T^{n,\kappa}$. The quantity $\Gamma_{r}(\underline u, g, \kappa)$
defined at \eqref{Gammar} will essentially determine the limit. First, we demonstrate how the terms $\Gamma_{r}(\underline u, g, \kappa)$ and $\Gamma'_{r}(\underline u, g, \kappa)$ 
depend on the rank of the argument $\alpha$. The following lemma has been shown in \cite[Lemma 3.1]{JP13}.
\begin{lem} \label{lemma:Gamma}
Let $\underline u=(\alpha,\beta,\gamma,a, \varphi) \in \mathcal U$ with $\beta \in \mathcal{M}_d$ and $g$ be a weight function. 
	Then, if $r\in \{0,\ldots,d\}$ and $\kappa=1,2$, we deduce that
	\begin{align}
	\label{eqn: positive}
		&\rank(\alpha) =r  \ \Longrightarrow \ \Gamma_r(\underline u,  g,\kappa)>0, \ \Gamma'_{r}(\underline u, g,\kappa) >0, \\
	\label{eqn: positive2}
		& \rank(\alpha) < r \ \Longrightarrow \
		\Gamma_{r}(\underline u, g,\kappa)= \Gamma'_{r}(\underline u, g,\kappa)=0.
	\end{align}
\end{lem}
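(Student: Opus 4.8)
The plan is to reduce the statement to the deterministic perturbation identity behind \eqref{detexp}--\eqref{detconv}. Writing $v_j=\binom{x_j}{y_j}=\Psi(\underline u,g,\kappa)_j$ and setting $A:=\text{mat}(x_1,\ldots,x_d)$, $B:=\text{mat}(y_1,\ldots,y_d)$, the definition \eqref{F_r} gives $\overline F_r(\underline u,g,\kappa)=\gamma_r(A,B)^2$. The upper block $A$ is built only from $\alpha$ and $\overline W$ via \eqref{Psi1}, whereas the lower block $B$ collects the contributions of $a$, $\gamma$, $\beta$ and $\varphi$ in \eqref{Psi2}; the noise term $\propto(\varphi^{1/2})\overline\Theta_{\kappa j}$ only enriches the randomness of $B$ and leaves $A$ untouched, so the argument is structurally the one of \cite[Lemma 3.1]{JP13}.

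Implication \eqref{eqn: positive2} is the easy direction. By \eqref{Psi1} the $j$th column of $A$ is $x_j=\kappa^{-1/2}\alpha\zeta_j$ for some $\zeta_j\in\R^q$, whence $\rank(A)\le\rank(\alpha)<r$ deterministically. Every $G\in\mathcal M^r_{A,B}$ contains $r$ columns taken from $A$; being $r$ columns of a matrix of rank $<r$ they are linearly dependent, so $\det(G)=0$ and $\gamma_r(A,B)=0$ identically by \eqref{gamma}. Thus $\overline F_r(\underline u,g,\kappa)=0$ surely and \eqref{Gammar} yields $\Gamma_r=\Gamma'_r=0$.

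For \eqref{eqn: positive}, fix $\rank(\alpha)=r$. First I would show $\rank(A)=r$ almost surely: the columns $x_j=\kappa^{-1/2}\alpha\zeta_j$ are i.i.d.\ centred Gaussian vectors with covariance $\psi_2(g)\,\alpha\alpha^\star$ of rank $r$, i.e.\ non-degenerate Gaussians supported on the $r$-dimensional column space of $\alpha$; since $d\ge r$, any $d$ of them span that space a.s., so $\rank(A)=r$ a.s. Next I would invoke the deterministic fact from \cite{JP13}: for every fixed $A$ with $\rank(A)=r$ the map $B\mapsto\gamma_r(A,B)$ is a polynomial in the entries of $B$ that is not identically zero, hence has a Lebesgue-null zero set. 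To avoid that null set I condition on all randomness except the perturbation motion $\overline W'$; because $\beta\in\mathcal M_d$ has full rank, the $\beta$-term in \eqref{Psi2} adds to each column of $B$ an independent non-degenerate Gaussian $\kappa^{-1/2}\beta H_j$ with $H_j\sim\mathcal N(0,\kappa\psi_2(g)I_d)$, so conditionally $B$ is absolutely continuous on $\R^{d\times d}$. Therefore $\gamma_r(A,B)\neq 0$ a.s.\ on $\{\rank(A)=r\}$, which has full probability, and $\Gamma_r=\overline\E[\gamma_r(A,B)^2]>0$.

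It remains to establish $\Gamma'_r=\Var(\overline F_r)>0$, and this non-degeneracy is the step I expect to require the most care. Conditioning again on everything but $\overline W'$ (equivalently on $A$ and on the part $B_0$ of $B$ free of the $\beta$-term), $\gamma_r(A,B)=\gamma_r(A,B_0+\kappa^{-1/2}\beta H)$ is, for $r<d$, a non-constant polynomial in the non-degenerate Gaussian $H$: it is not identically zero yet vanishes at $B=0$, since then every $G\in\mathcal M^r_{A,B}$ has a zero column because $d-r\ge 1$. A non-constant polynomial of a full-support Gaussian cannot be a.s.\ equal to a constant, so its square has strictly positive conditional variance on $\{\rank(A)=r\}$, giving $\Gamma'_r\ge\overline\E[\Var(\overline F_r\mid A,B_0)]>0$. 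The borderline case $r=d$ must be treated separately, since then $\gamma_d(A,B)=\det(A)$ does not depend on $B$; there I would instead exploit the randomness of $\overline W$, noting that $\det(A)^2$ is a non-constant polynomial in $\zeta_1,\ldots,\zeta_d$ that is nonzero a.s.\ because $\rank(\alpha)=d$, which again forces positive variance.
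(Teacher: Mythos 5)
Your proof is correct, and in substance it is the argument the paper relies on: the paper's own ``proof'' of Lemma \ref{lemma:Gamma} is a one-line deferral to the proof of Lemma 3.1 in \cite{JP13}, plus the remark that the noise contributions $(\varphi^{1/2})\overline\Theta_{\kappa j}$ can be absorbed into the analogous Gaussian terms appearing there --- which is exactly what you do by folding the $\varphi$-term into the $\overline W'$-free part $B_0$ of $B$ and observing that only the full-rank $\beta$-perturbation driven by $\overline W'$ matters. What your write-up adds is a self-contained verification of all the steps the paper leaves implicit: the deterministic rank bound $\rank(A)\le\rank(\alpha)$ for \eqref{eqn: positive2}; the conditioning on everything except $\overline W'$, under which $B=B_0+\kappa^{-1/2}\beta H$ is a nondegenerate Gaussian on $\R^{d\times d}$, combined with the fact that $\gamma_r(A,\cdot)$ is a not-identically-zero polynomial whenever $\rank(A)=r$ (this is the one ingredient you import from \cite{JP13}; note it also admits a two-line direct proof: pick $r$ independent columns of $A$ indexed by $I$, set $B_i=0$ for $i\in I$ and let the columns $B_i$, $i\notin I$, complete those columns of $A$ to a basis of $\R^d$, so the sum in \eqref{gamma} collapses to a single nonzero determinant); the law-of-total-variance step for $\Gamma'_r>0$, using that $\gamma_r(A,0)=0$ when $d-r\ge1$ makes $H\mapsto\gamma_r(A,B_0+\kappa^{-1/2}\beta H)$ a non-constant polynomial, and that a polynomial of a full-support Gaussian is a.s.\ constant only if it is a constant polynomial; and the separate treatment of the boundary case $r=d$, where $\gamma_d(A,B)=\det(A)$ no longer depends on $B$ and nondegeneracy must instead come from $\overline W$ together with $\rank(\alpha)=d$. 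All of these steps are sound, so your proposal is a complete and correct proof --- indeed more detailed than the one given in the paper.
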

The law of large numbers is as follows.

\begin{theo} \label{th1}
Assume that conditions (A) and (E) hold. Let $r\in \{0,\ldots,d\}$ and $g$ be a weight function. Then, on $\Omega_T^r$ and for $\kappa=1,2$, we obtain the convergence
\begin{align} \label{lln}
(\kappa u_n)^{r-d} S(g)_T^{n,\kappa} \toop S(r,g)_T^{\kappa}:= \int_0^T \Gamma_r(\sigma_s, \widetilde{\sigma}, v_s,b_s, \Sigma,  g,\kappa) ds>0. 
\end{align}  
\end{theo}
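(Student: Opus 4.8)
The plan is to reduce the statistic to a Riemann-type block sum whose leading contribution is governed by the function $\gamma_r$ from \eqref{gamma}, and then to establish a conditional law of large numbers for the resulting triangular array. Writing $M_i^{n,\kappa}$ for the normalized matrix appearing as the argument of $f$ in \eqref{main-statistic}, i.e. $M_i^{n,\kappa}=(\kappa u_n)^{-1/2}\,\text{mat}(\overline Z(g)^{n,\kappa}_{(3i+\kappa-1)dk_n},\ldots)$, one has $f(\cdots)=\det(M_i^{n,\kappa})^2$, so that $(\kappa u_n)^{r-d}S(g)_T^{n,\kappa}=3du_n\sum_i(\kappa u_n)^{r-d}\det(M_i^{n,\kappa})^2$. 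The left endpoint of the $i$-th block is $t_i=(3i+\kappa-1)du_n$.

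First I would invoke the stochastic decomposition \eqref{stoch decomp}, which identifies the dominating matrix $A(g)_i^{n,\kappa}$ (of rank equal to the local rank $r_{t_i}$), the tight second-order perturbation $B(g)_i^{n,\kappa}=B(1,g)_i^{n,\kappa}+B(2,g)_i^{n,\kappa}+B(3,g)_i^{n,\kappa}$, and the tight higher-order terms $C,D$. Applying the deterministic expansion \eqref{detexp} with $\lambda=\sqrt{\kappa u_n}$ to $M_i^{n,\kappa}=A+\lambda B+\lambda^2(C+D)$, multilinearity of the determinant yields, on a block with local rank $r$,
\[
(\kappa u_n)^{r-d}\det(M_i^{n,\kappa})^2=\gamma_r\big(A(g)_i^{n,\kappa},B(g)_i^{n,\kappa}\big)^2+\rho_i^{n,\kappa},
\]
where $\rho_i^{n,\kappa}$ collects the higher powers of $\lambda$. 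On $\Omega_T^r$ we have $r_{t_i}\le r$ for all $i$, so on blocks with $r_{t_i}<r$ the leading determinant power $\lambda^{d-r_{t_i}}$ already exceeds $\lambda^{d-r}$ and the whole contribution is $O_{\PP}((\kappa u_n)^{r-r_{t_i}})\to0$; the regularizing term $B(2,g)_i^{n,\kappa}$ coming from $X'$ guarantees $\gamma_{r_{t_i}}(A,B)\neq0$ a.s., so no cancellation spoils this accounting. I would then show $3du_n\sum_i\rho_i^{n,\kappa}\toop0$ from the tightness of $A,B,C,D$ and suitable moment bounds, reducing the problem to the convergence of $3du_n\sum_i\gamma_r(A_i^{n,\kappa},B_i^{n,\kappa})^2$.

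The core is a conditional law of large numbers for $\zeta_i^{n,\kappa}:=3du_n\,\gamma_r(A_i^{n,\kappa},B_i^{n,\kappa})^2$. I would freeze the coefficients $\sigma,v,b$ and the noise covariance $\Sigma$ at $t_i$, which is legitimate by the c\`adl\`ag and local boundedness properties in (A) and incurs an asymptotically negligible error. Conditionally on $\f_{t_i}$, the normalized Brownian, drift/volatility-of-volatility, perturbation and noise increments building $A_i^{n,\kappa}$ and $B_i^{n,\kappa}$ converge in law to the variables $\Psi(\uu,g,\kappa)_j$ of \eqref{Psi1}--\eqref{Psi2} with $\uu=(\sigma_{t_i},\widetilde\sigma,v_{t_i},b_{t_i},\Sigma)$; since $\gamma_r^2=F_r$ by \eqref{F_r}, passing to the conditional expectation gives $\E[\zeta_i^{n,\kappa}\mid\f_{t_i}]\approx 3du_n\,\Gamma_r(\sigma_{t_i},\widetilde\sigma,v_{t_i},b_{t_i},\Sigma,g,\kappa)$, a Riemann sum for $\int_0^T\Gamma_r(\sigma_s,\ldots)ds$. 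A conditional-variance estimate of order $(3du_n)^2\cdot(T/3du_n)=O(u_n)\to0$ then upgrades convergence of the conditional means to convergence in probability via the standard triangular-array argument (as in \cite{JP12}). Positivity of the limit follows from Lemma \ref{lemma:Gamma}: on $\Omega_T^r$ one has $\Gamma_r(\sigma_s,\ldots)=0$ wherever $r_s<r$ and $>0$ wherever $r_s=r$, and by the lower-semicontinuity argument of Section \ref{sec2} the set $\{s:r_s=r=R_T\}$ has positive Lebesgue measure.

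The hard part will be the conditional weak convergence together with the uniform integrability needed to pass the unbounded polynomial functional $\gamma_r^2$ through the limit, since $A_i^{n,\kappa}$ and $B_i^{n,\kappa}$ mix four heterogeneous sources entering at their own rates: the diffusion via $\sigma$, the drift and volatility-of-volatility via $b,v$, the independent perturbation $X'$, and the non-scaling i.i.d. noise $\varepsilon$. Establishing the requisite moment bounds, controlling the discretization error between $g$ and its step version $g^n$ of \eqref{g^n} (so that $\psi_1(g^n)\to\psi_1(g)$ as in \eqref{psi_1(gn)} feeds correctly into the $\overline\Theta$-term of \eqref{Psi2}), and verifying that $\rho_i^{n,\kappa}$ is negligible uniformly across blocks of differing local rank are the principal technical hurdles; these I would handle by isolating each contribution in \eqref{stoch decomp} and bounding it with Burkholder--Davis--Gundy together with the moment assumptions on $\varepsilon$ from (E).
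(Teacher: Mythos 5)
Your proposal is sound, but it takes a genuinely different route from the paper: the paper never proves Theorem \ref{th1} by a standalone law-of-large-numbers argument. Instead, Theorem \ref{th1} is obtained as a direct consequence of the central limit theorem (Theorem \ref{th2}), whose proof runs through Lemma \ref{lemma: 5.4} (the determinant inequality \eqref{eqn: inequality} reduces $(\kappa u_n)^{r-d}S(g)_T^{n,\kappa}$ to $3du_n\sum_i\gamma_r(A(g)_i^{n,\kappa},B(g)_i^{n,\kappa})^2$ at the rate $\sqrt{u_n}$, which requires an odd/even parity argument to kill the first-order correction), the exact conditional-law identification of Lemma \ref{lemma: law}, the replacement $(\Sigma^n,g^n)\to(\Sigma,g)$ of Lemma \ref{lemma: identification}, the Riemann approximation \eqref{eqn: riemann}, and Jacod's stable limit theorem; multiplying the resulting $\sqrt{u_n}$-rate statement by $\sqrt{u_n}$ gives \eqref{lln}, with positivity from Lemma \ref{lemma:Gamma} and the open-set argument of Section \ref{sec2}, exactly as you argue. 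Your direct route is legitimately lighter for the weaker statement: for convergence in probability without a rate you correctly observe that crude bounds suffice -- the first-order term is $O_{\PP}(\sqrt{u_n})$ by Lemma \ref{lemma: boundedness} alone (no parity argument), the $D$-part is handled by Lemma \ref{lemma: convergence}, and the conditional-mean/conditional-variance triangular-array argument replaces the stable CLT. What the paper's organization buys is economy: all of the heavier machinery is needed anyway for the testing procedure, so Theorem \ref{th1} comes for free.

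Two simplifications would collapse your argument onto the paper's and remove the step you flag as the principal difficulty. First, the ``conditional weak convergence plus uniform integrability'' step is unnecessary: under (E) the noise is exactly Gaussian and Brownian increments scale exactly, so after localization to (A1) the $\mathcal F_i^{n,\kappa}$-conditional law of $\gamma_r(A(g)_i^{n,\kappa},B(g)_i^{n,\kappa})^2$ coincides \emph{exactly} with that of $\overline F_r(\sigma_i^{n,\kappa},\widetilde\sigma,v_i^{n,\kappa},b_i^{n,\kappa},\Sigma^n,g^n,\kappa)$ (Lemma \ref{lemma: law}); hence the conditional expectation equals $\Gamma_r(\sigma_i^{n,\kappa},\widetilde\sigma,v_i^{n,\kappa},b_i^{n,\kappa},\Sigma^n,g^n,\kappa)$ with no limiting argument, and only the deterministic convergence $(\Sigma^n,\psi_l(g^n))\to(\Sigma,\psi_l(g))$ fed through the smooth polynomial $\tau_{r,\uu,\kappa}$ of Proposition \ref{prop1}(i) remains. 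Second, your block-by-block bookkeeping with exponents $d-r_{t_i}$ on low-rank blocks, and the remark that $B(2,g)_i^{n,\kappa}$ prevents cancellation there, is not needed: whenever $\rank(A(g)_i^{n,\kappa})<r$, every matrix in $\m^r_{A,B}$ contains $r$ linearly dependent columns, so $\gamma_r(A(g)_i^{n,\kappa},B(g)_i^{n,\kappa})=0$ identically, and the uniform inequality \eqref{eqn: inequality}, valid under the sole hypothesis $\rank\le r$ (which holds on all blocks on $\Omega_T^r$), treats all blocks at once; the almost-sure non-vanishing of $\gamma_{r_{t_i}}$ plays no role in the upper bound and only matters for the positivity of the limit, where your lower-semicontinuity argument is the right one.
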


In view of Remark \ref{rem1}, Theorem \ref{th1} is not directly applicable since the limit $S(r,g)_T^{\kappa}$ crucially depends on $\kappa$, meaning that generally $S(r,g)_T^{1} \neq S(r,g)_T^{2}$.
In particular, the ratio statistics $S(g)_T^{n,2}/S(g)_T^{n,1}$ does not contain any information about the unknown maximal rank $R_T$. To make
use of Theorem \ref{th1} we need a better understanding of the structure of the functional $\Gamma_r$. The following proposition is absolutely crucial
for our testing procedure.

\begin{prop} \label{prop1}
(i) Fix $r\in\{0,\ldots, d\}$, $\underline u \in \mathcal U$ and $\kappa=1,2$. Then there exist $C^\infty$-functions $\tau_{r,\underline u, \kappa}, \tau'_{r,\underline u, \kappa} \colon \R^4 \to \R$ such that 
\begin{align} \label{psidep}
\Gamma_r(\underline u,  g,\kappa) = \tau_{r,\underline u, \kappa} (\psi_1(g), \ldots, \psi_4(g)), \quad 
\Gamma'_r(\underline u,  g,\kappa) = \tau'_{r,\underline u, \kappa} (\psi_1(g), \ldots, \psi_4(g))
\end{align}
for any weight function $g$.\\
(ii) Let $g$ and $h$ be weight functions such that $\psi_1(h) = 4 \psi_1(g)$ and $\psi_l(h) = \psi_l(g)$ for $l=2,3,4$.
Then, for any $r\in\{0,\ldots, d\}$ and any $\underline u\in \mathcal U$, we obtain that
\[
\Gamma_r(\underline u,  g,1) = \Gamma_r(\underline u,  h,2), \qquad \Gamma'_r(\underline u,  g,1) = \Gamma'_r(\underline u,  h,2).
\]
\end{prop}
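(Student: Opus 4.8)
The plan is to reduce everything to moment computations for the vectors $\Psi(\uu,g,\kappa)_j$ and then to track the dependence on $g$ and $\kappa$ explicitly. The starting observation is that $\gamma_r$ is multilinear in its column arguments, so $F_r$ in \eqref{F_r} is a polynomial that is of degree at most $2$ in each $v_j$; consequently $\overline F_r(\uu,g,\kappa)$ and $\overline F_r(\uu,g,\kappa)^2$ are fixed polynomials (of total degree $2d$, resp.\ $4d$) in the components of $\Psi(\uu,g,\kappa)_1,\dots,\Psi(\uu,g,\kappa)_d$. Hence $\Gamma_r(\uu,g,\kappa)$ and $\Gamma'_r(\uu,g,\kappa)$ are finite linear combinations, with coefficients depending only on $r$ and $\uu$, of joint moments of these components; the whole argument consists in evaluating those moments and exhibiting their dependence on $g$ and $\kappa$.

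For part (i) I would split each component of $\Psi(\uu,g,\kappa)_j$, after the change of variables $s\mapsto s/\kappa-(j-1)$ that maps $[\kappa(j-1),\kappa j]$ onto $[0,1]$ and replaces $\overline W$ by a rescaled standard motion $\widehat W$ on $[0,1]$, into three pieces: the deterministic drift piece $a^l\psi_3(g)$; the first–chaos (Gaussian) pieces from $\overline W$, $\overline W'$, $\overline\Theta$ and from the boundary term $\kappa^{-1/2}\overline W^k_{\kappa(j-1)}\int_0^1 g\,d\widehat W^m$ that the double integral in \eqref{Psi2} produces; and the second–chaos piece, the double Wiener–Itô integrals $\int_0^1 g\,\widehat W^k\,d\widehat W^m$. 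The Gaussian pieces are handled by Wick's theorem: all their covariances are expressed through $\psi_1(g)$ (the noise block), $\psi_2(g)$ (the $\int g^2$–type variances of the $\overline W$–, $\overline W'$– and boundary terms) and $\psi_3(g)$ (the drift scale and boundary cross–covariances such as $\overline\E[\int_0^1 g\,d\widehat W\cdot \widehat W_1]=\psi_3(g)$). The delicate contributions are the moments of the second–chaos pieces, which I would evaluate by the product/diagram formula for iterated integrals, writing each double integral through its explicit kernel, and then collapsing the resulting trace–type integrals of $g$ to polynomials in $\psi_2(g),\psi_3(g),\psi_4(g)$ by repeated integration by parts, the boundary contributions vanishing because $g(0)=g(1)=0$. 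Once every moment is a polynomial in $\psi_1(g),\dots,\psi_4(g)$, the functions $\tau_{r,\uu,\kappa},\tau'_{r,\uu,\kappa}$ may be taken polynomial, hence $C^\infty$ (their values off the range of $(\psi_1,\dots,\psi_4)$ being irrelevant and fixed arbitrarily).

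I expect this second–chaos reduction to be the main obstacle. For $\Gamma_r$ it is benign: each $\Psi_j$ enters $F_r$ only to degree two, so only the second moment of a double integral is needed, and one checks at once that $\overline\E[(\int_0^1 g\,\widehat W^k\,d\widehat W^m)^2]=\psi_4(g)$. For $\Gamma'_r=\overline\E[\overline F_r^2]-\Gamma_r^2$ each $\Psi_j$ enters to degree four, so third– and fourth–order moments of the double integrals appear; the real work is to verify that these too collapse to $\psi_2,\psi_3,\psi_4$ and leave behind no genuinely new functional of $g$. This is the technical heart of the argument and is precisely the step that relies on the normalizations $g(0)=g(1)=0$.

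For part (ii) the decisive structural fact — already visible in the rescaling used above — is that $\kappa$ enters the law of the family $(\Psi(\uu,g,\kappa)_j)_{j=1,\dots,d}$ only through the noise block: after the change of variables every term except the $\overline\Theta$–term in \eqref{Psi1}--\eqref{Psi2} is $\kappa$–free, while the $\overline\Theta$–term is a block of i.i.d.\ (across $j$, since the indices $\kappa,2\kappa,\dots,d\kappa$ are distinct) Gaussian vectors, independent of everything else, with covariance $\kappa^{-2}\theta^{-3}\psi_1(g)\,\varphi$. Combined with part (i), this shows that $\Gamma_r$ and $\Gamma'_r$ depend on $(g,\kappa)$ only through the four numbers $\kappa^{-2}\psi_1(g),\psi_2(g),\psi_3(g),\psi_4(g)$, i.e.\ $\tau_{r,\uu,\kappa}(p_1,p_2,p_3,p_4)=\widetilde\tau_{r,\uu}(\kappa^{-2}p_1,p_2,p_3,p_4)$ for a single $\widetilde\tau_{r,\uu}$, and likewise for $\tau'$. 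Under $\psi_1(h)=4\psi_1(g)$ and $\psi_l(h)=\psi_l(g)$ for $l=2,3,4$ we then have $1^{-2}\psi_1(g)=2^{-2}\psi_1(h)$ while $\psi_2,\psi_3,\psi_4$ agree, so the arguments of $\widetilde\tau_{r,\uu}$ coincide for $(g,1)$ and $(h,2)$, whence $\Gamma_r(\uu,g,1)=\Gamma_r(\uu,h,2)$ and $\Gamma'_r(\uu,g,1)=\Gamma'_r(\uu,h,2)$.
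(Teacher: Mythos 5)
Your skeleton is the same as the paper's: both arguments reduce $\Gamma_r(\uu,g,\kappa)$ and $\Gamma'_r(\uu,g,\kappa)$ to joint moments of the components of $\Psi(\uu,g,\kappa)_j$, argue that $g$ enters these moments only through $\psi_1(g),\ldots,\psi_4(g)$, observe that $\kappa$ and $\psi_1(g)$ occur only in the combination $\psi_1(g)/(\kappa^2\theta^3)$ coming from the noise block, and then read off (ii) from (i). Your treatment of $\Gamma_r$ (only second moments are needed, and $\E\bigl[\bigl(\int_0^1 g\,\widehat W^k\,d\widehat W^m\bigr)^2\bigr]$ produces the $\psi_4$-terms) and your derivation of (ii) from (i) agree with what the paper does.

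The problem is exactly the step you single out and then postpone: for $\Gamma'_r$ you need third and fourth joint moments of the double integrals $J=\int_0^1 g\,\widehat W^k\,d\widehat W^m$, and you only assert that they ``collapse to $\psi_2,\psi_3,\psi_4$''. That assertion is not a deferrable computation --- it is the entire content of (i) for $\Gamma'_r$ --- and it fails, so the plan cannot be completed as described. The variable $J$ is not Gaussian: It\^o calculus gives
\[
\E[J^3]=6\int_0^1 g^2(t)\Bigl(\int_0^t s\,g(s)\,ds\Bigr)dt,
\qquad
\mathrm{cum}_4(J)=3\int_{[0,1]^4} g(a\vee b)\,g(b\vee c)\,g(c\vee d)\,g(d\vee a)\,da\,db\,dc\,dd,
\]
and these nested, respectively cyclic, integrals are \emph{not} functions of $(\psi_1,\ldots,\psi_4)$. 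Indeed, at the triangular weight $g(x)=\min(x,1-x)$ the derivative of $g\mapsto\E[J^3]$ is $\eta\mapsto\int_0^1\eta\,\Phi$ with $\Phi(x)=12\,g(x)\int_0^x s\,g(s)\,ds+6x\int_x^1 g^2(t)\,dt$, which on $(1/2,1)$ is a quartic polynomial with nonzero leading coefficient, whereas the derivatives of $\psi_1,\ldots,\psi_4$ span only a point mass at $1/2$ and the piecewise quadratics $g,\,1,\,xg$; so there are perturbations keeping all four $\psi_l$ fixed while moving $\E[J^3]$, and the implicit function theorem upgrades this to genuine pairs of weight functions with equal $\psi$'s and different $\E[J^3]$. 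These moments really do enter $\Gamma'_r$ with no cancellation: already for $d=q=1$, $r=0$, $\uu=(0,0,\gamma,a,0)$ one has $\Psi(\uu,g,\kappa)_1^{2}=a\psi_3(g)+\gamma J$, hence
\[
\Gamma'_0(\uu,g,\kappa)=4a^2\gamma^2\psi_3(g)^2\psi_4(g)+4a\gamma^3\psi_3(g)\,\E[J^3]+2\gamma^4\psi_4(g)^2+\gamma^4\,\mathrm{cum}_4(J).
\]
For comparison, the paper's own proof disposes of this step by listing ``five forms'' for the fourth moments that are pure products of $\psi$-factors, i.e.\ it applies Wick's rule as if every component of $\Psi_j$ were Gaussian, recording no third-cumulant or connected fourth-cumulant contribution; the diagram formula you propose, carried out honestly, produces precisely the extra terms displayed above and therefore cannot reproduce that list. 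So the gap you flagged is genuine and is not closable by ``repeated integration by parts'': a correct proof would have to exhibit a cancellation mechanism special to the determinant structure of $\gamma_r$ (neither your proposal nor the paper contains one), and without it the $\Gamma'_r$-half of the proposition --- and with it part (ii) for $\Gamma'_r$, on which the variance estimation in Proposition \ref{prop2} relies --- remains unproved.
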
 
Proposition \ref{prop1}(i) says that the quantity $\Gamma_r(\underline u,  g,\kappa)$ does not depend on the entire function $g$, but only
on the quantities $\psi_l(g)$, $l=1, \ldots, 4$. But most importantly, Proposition \ref{prop1}(ii) and Theorem \ref{th1} imply the convergence
\begin{align} \label{finalconv}
\frac{S(h)_T^{n,2}}{S(g)_T^{n,1}} \toop 2^{d-r} \qquad \text{on } \Omega_T^r,
\end{align} 
whenever the pair of weight functions $g,h$ satisfies the conditions of Proposition \ref{prop1}(ii).
This opens the door to hypothesis testing. We now give an example of a pair of weight function $g,h$ which fulfills the conditions of Proposition \ref{prop1}(ii).

\begin{ex} \rm \label{ex:weight functions}
We define the two auxiliary weight functions $\widetilde g(x) := \max(0,\min(x,1-x))$ and $\widetilde h(x) := \max(0,\min(ax,b(1-x)))$ with $a=\frac{2}{2-\sqrt{3}}$ and $b=\frac{2}{2+\sqrt{3}}$. Then, a pair of weight functions satisfying the conditions of Proposition \ref{prop1}(ii) is given by $g_c(x):= \widetilde g(cx)$ and $h_c(x):= \widetilde h(cx -c + 1)$ where $c=\frac{8 + \sqrt{3}}{8}$ (see Figure \ref{fig:weight functions}). Indeed, for $c\ge1$, we obtain that
\begin{align*}
\psi_1(h_c) &= 4\psi_1(g_c) = 4c, \\
\psi_2(h_c) &= \psi_2(g_c) = \frac{1}{12c}, \\
\psi_3(h_c) &= \psi_3(g_c) = \frac{1}{4c}, \\
\psi_4(h_c) &= \frac{8c - 4-\sqrt{3}}{96c^2}, \quad \psi_4(g_c) = \frac{1}{24c^2}, 
\end{align*}
and for $c=\frac{8 + \sqrt{3}}{8}$, we have $\frac{8c - 4-\sqrt{3}}{96c^2} =  \frac{1}{24c^2} = \frac{536 - 128 \sqrt{3}}{11163}$.
\qed
\end{ex}

\begin{figure}
        \centering
        \begin{subfigure}[b]{0.5\textwidth}
                \includegraphics[width=\textwidth]{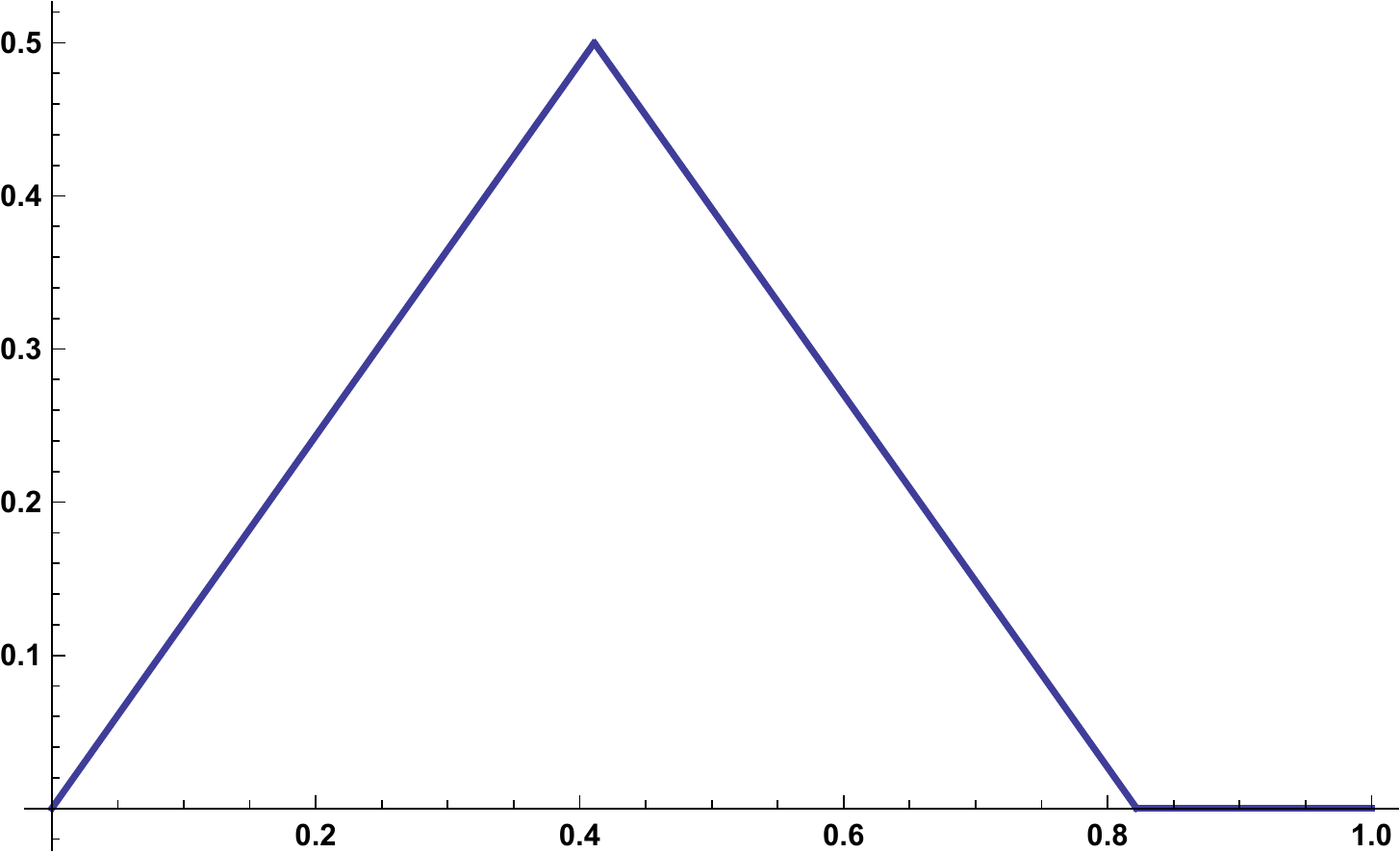}
                \caption{$g_c$ from Example \ref{ex:weight functions} with $c=\frac{8 + \sqrt{3}}{8}$.}
        \end{subfigure}%
        ~ 
        \begin{subfigure}[b]{0.5\textwidth}
                \includegraphics[width=\textwidth]{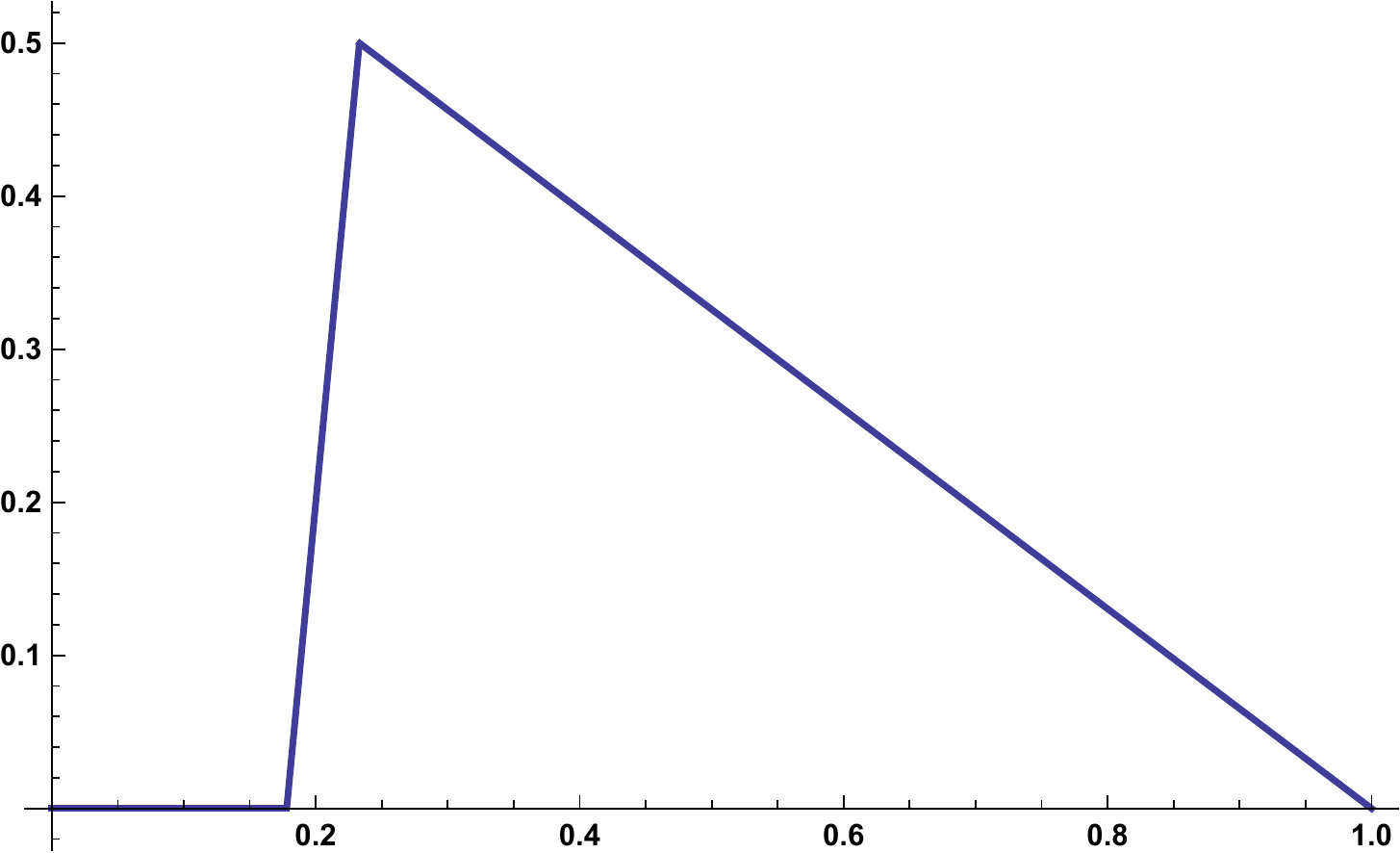}
                \caption{$h_c$ from Example \ref{ex:weight functions} with $c=\frac{8 + \sqrt{3}}{8}$.}
        \end{subfigure}
        \caption{A pair of weight functions satisfying the conditions of Proposition \ref{prop1}(ii).}\label{fig:weight functions}
\end{figure}

\begin{rem}\label{rem:weight functions} \rm
From a statistical point of view and regarding the definition of the pre-averaged increments in \eqref{prereturn}, we see that it is certainly not ideal to chose weight functions which are locally constant. Nevertheless, Example \ref{ex:weight functions} is an attempt to reduce the parts where the weight functions are constant while still sticking to a rather simple `triangular' form.
\hfill\qed
\end{rem}

\subsection{Central limit theorem and testing procedure}
In order to provide a formal testing procedure associated with the convergence in probability at \eqref{finalconv} we need to show a joint stable central limit theorem 
for the statistics $(S(g)_T^{n,1}, S(h)_T^{n,2} )$. We say that a sequence of random variables $H_n$ converges stably in law to
$H$ ($H_n \stab H$), where $H$ is defined on an extension $(\widetilde{\Omega}, \widetilde{\mathcal{F}}, \widetilde{\mathbb P})$ 
of the original probability space $(\Omega, \mathcal{F}, \mathbb P)$, if and only if
\begin{equation*} 
\lim_{n\rightarrow \infty} \E[\phi (H_n) Z] = \widetilde{\E}[\phi (H)Z]
\end{equation*} 
for any bounded and continuous function $\phi$ and any bounded $\mathcal F$-measurable
random variable $Z$.  We refer to \cite{AE78}, \cite{JS02} or \cite{R63} for a detailed study of stable convergence. 
Note that stable convergence is a stronger mode of convergence than weak convergence, but it is weaker than 
convergence in probability.

Now, let $g$ and $h$ be two weight functions satisfying the conditions of Proposition \ref{prop1}(ii). We define the statistic $U(r,g,h)_T^{n}=(U(r,g,h)_T^{n,1},
U(r,g,h)_T^{n,2})$ via 
\begin{equation}\label{U}
U(r,g,h)_T^{n} = \frac{1}{\sqrt{u_n}} \left(  u_n^{r-d} S(g)_T^{n,1} - S(r,g)_T^{1}, 
(2 u_n)^{r-d} S(h)_T^{n,2} - S(r,h)_T^{2}  \right). 
\end{equation}
The following theorem is one of the most important results of the paper.

\begin{theo} \label{th2}
Assume that conditions (A) and (E) are satisfied, the weight functions $g,h$ fulfill the assumptions of Proposition \ref{prop1}(ii) and $R_T(\omega)\leq r$
for some $r\in \{0, \ldots d\}$.
Then we obtain the stable convergence 
\begin{align} \label{clt}
U(r,g,h)_T^{n} \stab \mathcal{MN}(0,V(r,g,h)_T), 
\end{align}  
where 
\begin{equation}\label{V}
V(r,g,h)_T=  \diag\left(3d \int_0^T \Gamma'_r(\sigma_s, \widetilde{\sigma}, v_s,b_s, \Sigma ,  g,1) ds, 
3d \int_0^T \Gamma'_r(\sigma_s, \widetilde{\sigma}, v_s,b_s, \Sigma ,  h,2) ds \right)
\end{equation}
is a diagonal matrix. $\mathcal{MN}(0,V(r,g,h)_T)$ denotes the two dimensional mixed normal distribution with $\mathcal G$-conditional mean $0$ and 
$\mathcal G$-conditional covariance matrix $V(r,g,h)_T$.
\end{theo}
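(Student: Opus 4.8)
The plan is to rewrite each component of $U(r,g,h)_T^n$ as the $\frac1{\sqrt{u_n}}$-normalisation of a triangular array of conditional martingale differences and to apply the stable central limit theorem for such arrays (see \cite{JP12}). Throughout I work on $\Omega_T^{\le r}$, so that $\rank(\sigma_t)\le r$ for all $t\in[0,T]$, and I first localise by the usual stopping-time argument, which lets me assume that all processes in (A) and the matrix $\Sigma$ are uniformly bounded. The backbone is the stochastic decomposition \eqref{stoch decomp}, which I would establish first: writing $\lambda=\sqrt{\kappa u_n}$ and $M_i^{n,\kappa}=A(g)_i^{n,\kappa}+\lambda B(g)_i^{n,\kappa}+\lambda^2\big(C(g)_i^{n,\kappa}+D(g)_i^{n,\kappa}\big)$ for the matrix appearing there, the $i$-th summand of $S(g)_T^{n,\kappa}$ equals $\det(M_i^{n,\kappa})^2$. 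Because $\rank(A(g)_i^{n,\kappa})\le r$, the multilinearity expansion \eqref{detexp} in the columns gives $\det(A+\lambda B)=\sum_{k=0}^r\lambda^{d-k}\gamma_k(A,B)$, whence
\[
(\kappa u_n)^{r-d}\det(M_i^{n,\kappa})^2=\gamma_r\big(A(g)_i^{n,\kappa},B(g)_i^{n,\kappa}\big)^2+\lambda\,\Xi_i^{n,\kappa}+\mathrm{O}_\PP(\lambda^2),
\]
where $\Xi_i^{n,\kappa}=2\,\gamma_r(A,B)\,\gamma_{r-1}(A,B)$ is the first-order correction.

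The next step is to identify $\zeta_i^{n,\kappa}:=\gamma_r\big(A(g)_i^{n,\kappa},B(g)_i^{n,\kappa}\big)^2$ as the term driving the limit. Freezing the coefficients $\sigma,v,b$ (and keeping $\Sigma$) at the left endpoint $t_i$ of the $i$-th block and replacing the discretised increments by their continuous Gaussian analogues, the $\mathcal F_i^n$-conditional law of $\zeta_i^{n,\kappa}$ becomes that of $\overline F_r(\uu_i,g^n,\kappa)$ from \eqref{Gammar}, with $\uu_i=(\sigma_{t_i},\widetilde\sigma,v_{t_i},b_{t_i},\Sigma)$: the components of $\Psi$ in \eqref{Psi1}--\eqref{Psi2} are designed so that the $x$-block reproduces $A(g)_i^{n,\kappa}$ (through $\alpha\leftrightarrow\sigma$) and the $y$-block reproduces $B=B(1)+B(2)+B(3)$ (through $a\leftrightarrow b$, $\gamma\leftrightarrow v$, $\beta\leftrightarrow\widetilde\sigma$ and $\varphi\leftrightarrow\Sigma$). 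Consequently $\E_i[\zeta_i^{n,\kappa}]\approx\Gamma_r(\uu_i,g^n,\kappa)$ and $\Var_i(\zeta_i^{n,\kappa})\approx\Gamma_r'(\uu_i,g^n,\kappa)$, and by Remark \ref{rem4} together with the smooth dependence on $\psi_1,\dots,\psi_4$ from Proposition \ref{prop1}(i) these converge to $\Gamma_r(\uu_i,g,\kappa)$ and $\Gamma_r'(\uu_i,g,\kappa)$ at rate $\mathrm{O}(k_n^{-1})$.

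With this in hand I would control the remaining terms and verify the CLT. The $\mathrm{O}_\PP(\lambda^2)=\mathrm{O}_\PP(u_n)$ remainder contributes $\mathrm{O}_\PP(\sqrt{u_n})$ to $U$ and is negligible. The correction $\lambda\Xi_i^{n,\kappa}$ is asymptotically negligible as well: its conditional mean vanishes by a parity argument, since under $\overline W\mapsto-\overline W$ the $x$-block of $\Psi$ changes sign while the $y$-block is invariant (the drift, the double It\^o integral in \eqref{Psi2}, and the $\overline W'$- and $\overline\Theta$-terms are all even in $\overline W$), so $\gamma_r\gamma_{r-1}$, of odd total degree $2r-1$ in the $x$-variables, is odd and $\E_i[\Xi_i^{n,\kappa}]\to0$, while its fluctuation forms a martingale sum of variance $\mathrm{O}(u_n)\to0$. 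Writing then, for the component with weight $g$ at frequency $\kappa$ (and $h$ when $\kappa=2$) and with $\chi_i^{n,\kappa}:=\zeta_i^{n,\kappa}-\E_i[\zeta_i^{n,\kappa}]$,
\[
U(r,g,h)_T^{n,\kappa}=3d\sqrt{u_n}\sum_i\chi_i^{n,\kappa}+\frac1{\sqrt{u_n}}\Big(3du_n\sum_i\E_i[\zeta_i^{n,\kappa}]-S(r,g)_T^{\kappa}\Big)+\mathrm{o}_\PP(1),
\]
the bias term is $\mathrm{o}_\PP(1)$ by a Riemann-sum estimate: replacing $\E_i[\zeta_i^{n,\kappa}]$ by $\Gamma_r(\uu_{t_i},g,\kappa)$ and using that $s\mapsto\Gamma_r(\uu_s,g,\kappa)$, with $\uu_s=(\sigma_s,\widetilde\sigma,v_s,b_s,\Sigma)$, is itself an It\^o semimartingale, the gap between $3du_n\sum_i$ and $\int_0^T$ is $\mathrm{O}_\PP(u_n)$, hence $\mathrm{o}_\PP(\sqrt{u_n})$. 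For the martingale term, taken jointly over $(g,1)$ and $(h,2)$, I would verify the standard conditions: the conditional variances converge, $(3d)^2u_n\sum_i\Var_i(\zeta_i^{n,\kappa})\toop 3d\int_0^T\Gamma_r'(\sigma_s,\widetilde\sigma,v_s,b_s,\Sigma,g,\kappa)\,ds$, giving the diagonal entries of $V(r,g,h)_T$; the cross term vanishes, $(3d)^2u_n\sum_i\E_i[\chi_i^{n,1}\chi_i^{n,2}]\toop0$, because the blocks for $\kappa=1$ and $\kappa=2$ rest on disjoint observation windows and thus on conditionally independent increments of $W$, $W'$ and $\varepsilon$, which produces the off-diagonal zeros; a fourth-moment bound $u_n^2\sum_i\E_i[(\zeta_i^{n,\kappa})^4]=\mathrm{O}_\PP(u_n)\to0$ supplies the Lyapunov condition; and the asymptotic orthogonality $\sqrt{u_n}\sum_i\E_i[\chi_i^{n,\kappa}\,\Delta_i^n N]\toop0$ for $N\in\{W,W'\}$ and every bounded $\mathcal F$-martingale orthogonal to $(W,W')$ guarantees that the limit is, conditionally on $\mathcal G$, centered Gaussian with covariance $V(r,g,h)_T$ and built from randomness independent of $\mathcal F$. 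Since $V(r,g,h)_T$ is $\mathcal G$-measurable, this yields the mixed-normal stable convergence \eqref{clt}; on $\{R_T<r\}$ all $\Gamma_r'$ vanish by Lemma \ref{lemma:Gamma}, so the limit degenerates to $0$, in agreement with the statement.

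The hard part will be the second step: justifying, despite the degeneracy $\det(A(g)_i^{n,\kappa})=0$ that forces us to read $f=\det^2$ at its second order, that the finite-sample conditional moments of $\zeta_i^{n,\kappa}$ are genuinely close to $\Gamma_r$ and $\Gamma_r'$ evaluated at the discretised weight $g^n$. This demands a careful error analysis of \eqref{stoch decomp}: quantifying the replacement of $\sigma,v,b$ by their frozen values $\uu_i$, the passage from the discrete double sums to the It\^o integrals in the $\gamma$-term of \eqref{Psi2}, and, most subtly, showing that neither $\Xi_i^{n,\kappa}$ nor the mixed contributions involving $C(g)_i^{n,\kappa}$ and $D(g)_i^{n,\kappa}$ leak into the limiting mean or variance, for which the parity cancellation above is the decisive structural ingredient.
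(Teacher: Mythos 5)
Your overall architecture does coincide with the paper's (localization, the decomposition \eqref{stoch decomp}, a determinant expansion whose first-order correction is killed by parity, identification of conditional moments through $\Gamma_r,\Gamma'_r$, a Riemann-sum treatment of the bias, and a stable martingale CLT), but your verification list contains one condition that is genuinely false and would block the proof: you require asymptotic orthogonality with respect to \emph{both} $W$ and $W'$, i.e.\ that $\sum_i \E\bigl[\chi_i^{n,\kappa}\,(W'^m_{3(i+1)du_n}-W'^m_{3idu_n})\,\big|\,\mathcal F_i^{n,1}\bigr]\toop 0$, and you conclude that the limit is ``built from randomness independent of $\mathcal F$''. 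The parity argument works only for $W$: conditionally on the noise and on $\sigma(W')$, the summands are even functionals of the $W$-increments, which yields \eqref{eqn: Jacod 3 b}. They are \emph{not} even in $W'$, because in $B=B(1)+B(2)+B(3)$ only $B(2)$ changes sign under $W'\mapsto -W'$. Concretely, for $d=1$, $r=0$ one has $\gamma_0(A,B)^2=B^2$, which contains the cross term $2B(1)B(2)$, and
\begin{equation*}
\E\bigl[B(1)\bigr]\;\E\bigl[B(2)\,(W'_{3(i+1)du_n}-W'_{3idu_n})\bigr]\;\approx\; b\,\widetilde\sigma\,\psi_3(g)^2\sqrt{\kappa u_n},
\end{equation*}
so after summation the $W'$-bracket converges to a non-zero limit whenever $b\neq0$. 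This is exactly why the paper invokes Theorem IX.7.28 of \cite{JS02}, whose conclusion allows the limit \eqref{limit} to carry a component $\int_0^T y_s\,dW'_s$ driven by $W'$ with $y\neq 0$: the limit is \emph{not} independent of $\mathcal F$, and mixed normality with covariance $V(r,g,h)_T$ holds conditionally on $\mathcal G$ only because $W'$ is independent of $\mathcal G$ while $y,z$ are $\mathcal G$-measurable. As written, your scheme cannot be completed; the fix is to drop the $W'$-orthogonality requirement and let the $W'$-bracket characterize the $y$-part of the limit.

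Two further points are not cosmetic. First, the first-order correction of $(\kappa u_n)^{r-d}\det(M_i^{n,\kappa})^2$ is not $2\gamma_r(A,B)\gamma_{r-1}(A,B)$ alone but $2\gamma_r(A,B)\bigl(\gamma_{r-1}(A,B)+\gamma'_r(A,B,C)+\gamma'_r(A,B,D)\bigr)$; parity disposes of the $\gamma_{r-1}$- and $C$-terms, but it cannot dispose of the $D$-term, since $D$ is a remainder built from increments of $a,v',v''$ and has no definite parity. The paper keeps it inside the error bound of \eqref{first inequality}, controlled by $\|D\|$, and eliminates it through the modulus-of-continuity estimate of Lemma \ref{lemma: convergence}; your claim that the parity cancellation is ``the decisive structural ingredient'' for the $C$- \emph{and} $D$-contributions is therefore wrong for $D$. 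Second, the conditional law of $\gamma_r(A(g)_i^{n,\kappa},B(g)_i^{n,\kappa})^2$ coincides \emph{exactly} with $\overline F_r$ evaluated at the rescaled noise covariance $\Sigma^n$ of \eqref{Sigma^n} and the discretized weight $g^n$, not at $\Sigma$; replacing $\Sigma^n$ by $\Sigma$ costs $o(\Delta_n^{1/6})$ per block, and absorbing this at the CLT rate is precisely what the exact form of \eqref{kn} is designed for — your sketch tracks only the $O(k_n^{-1})$ error from $g^n\to g$. Relatedly, for the orthogonality to bounded martingales orthogonal to $(W,W')$, the martingales generated by the noise cannot be handled by a representation-theorem argument; they require the separate counting argument over the time points $t_1,\ldots,t_m$ (the class $\mathcal N^1$ in the paper, following \cite{JLMPV09}).
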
 
Note that the rate of convergence $u_n^{-1/2}$ corresponds to $\Delta_n^{-1/6}$ for our choice of the window size $k_n$ at \eqref{kn}.
We remark that due to Proposition \ref{prop1}(ii), we know that $S(r,g)_T^{1} = S(r,h)_T^{2}$ such that the same centering term appears in both components on the right-hand side of \eqref{U}. Again thanks to Proposition \ref{prop1}(ii) we see that the two diagonal elements of $V(r,g,h)_T$ coincide. In order to obtain a feasible
version of the stable convergence in \eqref{clt}, we need to construct a consistent estimator of the $\mathcal G$-conditional covariance matrix $V(r,g,h)_T$. To this end, we define the following estimators for the `second moments':
\begin{align}\label{V(g)}
V(g,h)_T^{n,11}
	&=9d^2 u_n \sum_{i=0}^{[T/3d u_n]-1}
	f^2 \left(\left({\overline Z(g)^{n,1}_{(3id+ (j-1))k_n}}\big/{\sqrt{u_n}}\right)_{j=1,\ldots, d}\right), \\
\label{V(h)}
V(g,h)_T^{n,22}
	&=9d^2 u_n \sum_{i=0}^{[T/3d u_n]-1}
	f^2 \left(\left({\overline Z(h)^{n,2}_{(3id+ d+ 2(j-1))k_n}}\big/{\sqrt{2u_n}}\right)_{j=1,\ldots, d}\right), \\
\label{V(g,h)}
\begin{split}
V(g,h)_T^{n,12}
	&=9d^2 u_n \sum_{i=0}^{[T/3d u_n]-1}
	f  \left(\left({\overline Z(g)^{n,1}_{(3id+ (j-1))k_n}}\big/{\sqrt{u_n}}\right)_{j=1,\ldots, d}\right) \\
	&\hspace{10em} \times f \left(\left({\overline Z(h)^{n,2}_{(3id+ d+ 2(j-1))k_n}}\big/{\sqrt{2u_n}}\right)_{j=1,\ldots, d}\right),
\end{split}
\end{align} 
where $f$ is given at \eqref{f}.
Following the intuition from \eqref{finalconv} we define an estimator $\widehat{R}(g,h)_T^n$ via
\begin{align} \label{rt}
\widehat{R}(g,h)_T^n:= d - \frac{\log\left( S(h)_T^{n,2}\big/ S(g)_T^{n,1} \right)}{\log 2}. 
\end{align}  
Now, we obtain the following proposition.

\begin{prop} \label{prop2}
Assume that conditions (A) and (E) are satisfied and the weight functions $g,h$ fulfill the assumptions of Proposition \ref{prop1}(ii). \\
(i) Let $r\in \{0, \ldots, d\}$. Then, on $\Omega_T^{\le r}$: 
\begin{align}\label{V(g,h)_T^{n,11}}
(u_n^2)^{r-d}V(g,h)_T^{n,11} 
	&\toop 3d \int_0^T \Gamma'_r(\sigma_s, \widetilde{\sigma}, v_s,b_s, \Sigma,  g,1) + \Gamma^2_r(\sigma_s,
	\widetilde{\sigma}, v_s,b_s, \Sigma,  g,1)ds, \\
\label{V(g,h)_T^{n,22}}
(4 u_n^2)^{r-d} V(g,h)_T^{n,22} 
	&\toop 3d \int_0^T \Gamma'_r(\sigma_s, \widetilde{\sigma}, v_s,b_s, \Sigma,  h,2) + \Gamma^2_r(\sigma_s,
	\widetilde{\sigma}, v_s,b_s, \Sigma,  h,2)ds, \\
\label{V(g,h)_T^{n,12}}	
(2 u_n^2)^{r-d} V(g,h)_T^{n,12} 
	&\toop 3d \int_0^T  \Gamma_r(\sigma_s,\widetilde{\sigma}, v_s,b_s, \Sigma,  g,1)
	\Gamma_r(\sigma_s,\widetilde{\sigma}, v_s,b_s, \Sigma,  h,2)ds.
\end{align}
(ii) We have the (stable) central limit theorem 
\begin{align} \label{finalclt}
\frac{1}{\sqrt{u_n}}\frac{\widehat{R}(g,h)_T^n - R_T}{\sqrt{V(n,T,g,h)}} \stab  \Phi \sim \mathcal N(0,1),
\end{align}
where $\Phi$ is defined on an extension $(\widetilde{\Omega}, \widetilde{\mathcal{F}}, \widetilde{\mathbb P})$ 
of the original probability space $(\Omega, \mathcal{F}, \mathbb P)$ and is independent of the $\sigma$-algebra $\mathcal G$.
The random variable $V(n,T,g,h)$ is defined via
\begin{equation}\label{V(g)n}
V(n,T,g,h):= \frac{V(g,h)_T^{n,11} + 4^{\widehat{R}(g,h)_T^n -d}V(g,h)_T^{n,22} - 2^{1+ \widehat{R}(g,h)_T^n -d}V(g,h)_T^{n,12}  }{(S(g)_T^{n,1}\log 2)^2 }.
\end{equation}
\end{prop}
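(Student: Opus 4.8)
The plan is to read (i) as a law of large numbers of the same type as Theorem~\ref{th1}, obtained by inserting the test functions $f^2$ and a product $f\cdot f$ into the machinery already developed for $S(g)_T^{n,\kappa}$, and then to deduce (ii) from Theorem~\ref{th2} by a delta-method argument followed by a studentization step that rests on (i).

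\textbf{Part (i).} Fix $r$ and work on $\Omega_T^{\le r}$, where $\rank(\sigma_s)\le R_T\le r$ for every $s$. Comparing \eqref{V(g)}--\eqref{V(g,h)} with \eqref{main-statistic}, the diagonal estimators have precisely the structure of $S(g)_T^{n,1}$ and $S(h)_T^{n,2}$, but with $f$ replaced by $f^2$ and the prefactor $3du_n$ replaced by $9d^2u_n=3d\cdot(3du_n)$. I would therefore re-use the stochastic expansion \eqref{stoch decomp} of Remark~\ref{rem2} together with the determinant expansion \eqref{detexp}: since $A(g)_i^{n,\kappa}$ has rank at most $r$, one has $f=\det(\cdot)^2\approx(\kappa u_n)^{d-r}\gamma_r(\cdot)^2$, hence $f^2\approx(\kappa u_n)^{2(d-r)}\gamma_r(\cdot)^4$. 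After normalizing by $(u_n^2)^{r-d}$ ($\kappa=1$) and $(4u_n^2)^{r-d}$ ($\kappa=2$), the block-wise Riemann sum converges, exactly as in Theorem~\ref{th1}, to $3d\int_0^T\overline{\E}[\overline F_r(\cdot,g,\kappa)^2]\,ds$; because $\overline{\E}[\overline F_r^2]=\Gamma'_r+\Gamma_r^2$ by \eqref{Gammar}, this is \eqref{V(g,h)_T^{n,11}} and \eqref{V(g,h)_T^{n,22}}.

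For the cross term the decisive point is the block geometry: within each macro-block of length $3du_n$, the pre-averaged increments feeding the $g$-factor (base indices $(3id+(j-1))k_n$) use the raw increments of the first $d$ sub-blocks, whereas those feeding the $h$-factor (base indices $(3id+d+2(j-1))k_n$, sampled at frequency $2$) use the raw increments of the remaining $2d$ sub-blocks. The two families are thus built on \emph{disjoint} increments of $W$ and $W'$ and on independent noise variables; since $\sigma$, $v$, $b$ are essentially frozen across a block as $u_n\to0$, conditionally on the processes in (A) the limiting variables $\overline F_r(\cdot,g,1)$ and $\overline F_r(\cdot,h,2)$ are driven by independent Gaussian inputs and hence independent. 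The conditional expectation of their product therefore factorizes as $\Gamma_r(\cdot,g,1)\Gamma_r(\cdot,h,2)$, and the normalization $(2u_n^2)^{r-d}$ produces the Riemann sum converging to \eqref{V(g,h)_T^{n,12}}. Carrying this disjoint-increment/asymptotic-independence argument through the full expansion \eqref{stoch decomp} is the main technical obstacle of the proposition.

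\textbf{Part (ii).} Fix $r$ and work on $\Omega_T^r$, so that $R_T=r$ and $\widehat R(g,h)_T^n\toop r$ by \eqref{finalconv}. Put $a_n:=u_n^{r-d}S(g)_T^{n,1}$ and $b_n:=(2u_n)^{r-d}S(h)_T^{n,2}$; from \eqref{rt} one computes directly that $\widehat R(g,h)_T^n-R_T=-(\log 2)^{-1}(\log b_n-\log a_n)$. By Proposition~\ref{prop1}(ii) the two centerings in \eqref{U} agree, $S(r,g)_T^1=S(r,h)_T^2=:S=\int_0^T\Gamma_r(\cdot,g,1)\,ds>0$, and the diagonal entries of $V(r,g,h)_T$ are both equal to $v:=3d\int_0^T\Gamma'_r(\cdot,g,1)\,ds$. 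Theorem~\ref{th2} gives $u_n^{-1/2}(a_n-S,\,b_n-S)\stab\mathcal{MN}(0,\diag(v,v))$, with $a_n,b_n\toop S$. Applying the delta method to $(x,y)\mapsto\log y-\log x$ at $(S,S)$ (gradient $(-1/S,1/S)$) and using that the limit covariance is diagonal, I obtain
\begin{align*}
\frac{1}{\sqrt{u_n}}\bigl(\widehat R(g,h)_T^n-R_T\bigr)\stab\mathcal{MN}\bigl(0,W\bigr),\qquad W:=\frac{2v}{(S\log 2)^2},
\end{align*}
a mixed normal whose $\mathcal G$-measurable conditional variance is $W$ (both $S$ and $v$ are $\mathcal G$-measurable).

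It remains to check $V(n,T,g,h)\toop W$ and to studentize. By part (i), the continuous mapping theorem for $4^{\widehat R-d}$ and $2^{1+\widehat R-d}$ (valid since $\widehat R\toop r$), and Proposition~\ref{prop1}(ii) (which gives $\Gamma_r(\cdot,h,2)=\Gamma_r(\cdot,g,1)$ and $\Gamma'_r(\cdot,h,2)=\Gamma'_r(\cdot,g,1)$), each of the three summands in the numerator of \eqref{V(g)n} is of order $u_n^{2(d-r)}$, and their leading coefficients combine to $u_n^{2(d-r)}\cdot 6d\int_0^T\Gamma'_r(\cdot,g,1)\,ds=u_n^{2(d-r)}\cdot 2v$; since the denominator is $\sim u_n^{2(d-r)}(S\log 2)^2$, we get $V(n,T,g,h)\toop 2v/(S\log 2)^2=W$. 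Because the mixed-normal limit has $\mathcal G$-measurable conditional variance $W>0$ and $V(n,T,g,h)$ estimates it consistently, the usual studentization property of stable convergence yields $u_n^{-1/2}(\widehat R(g,h)_T^n-R_T)/\sqrt{V(n,T,g,h)}\stab\Phi\sim\mathcal N(0,1)$ with $\Phi$ independent of $\mathcal G$. The limit law being free of $r$, patching over the $\mathcal F_T$-measurable partition $\{\Omega_T^r\}_{r=0}^d$ gives \eqref{finalclt} on all of $\Omega$.
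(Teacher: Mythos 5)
Your proposal is correct and follows essentially the same route as the paper: part (i) is proved there by the determinant expansion \eqref{eqn: (6.6)} reducing $f^2$ and $f\cdot f$ to powers of $\gamma_r$, followed by conditional-moment identification (Lemma \ref{lemma: law}, giving $\Gamma'_r+\Gamma_r^2$) and a martingale-plus-Riemann-sum argument, while part (ii) is obtained exactly as you do, via the log representation of $\widehat{R}(g,h)_T^n-r$, the delta method for stable convergence, and studentization using part (i), Theorem \ref{th1} and the positivity from Lemma \ref{lemma:Gamma}. Your "disjoint-increment/asymptotic-independence" treatment of the cross term is the informal version of the paper's rigorous argument, which exploits the same block disjointness through the nesting $\mathcal F_i^{n,1}\subset\mathcal F_i^{n,2}$, the tower property, and the $O(\sqrt{u_n})$ continuity of the frozen coefficients.
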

We remark that Proposition \ref{prop2}(ii) follows directly from Theorem \ref{th2}, Proposition \ref{prop2}(i) and the delta method for stable convergence. For this, it is essential to realize that, even though the estimator $V(n,T,g,h)$ for the conditional variance is not $\mathcal G$-measurable, it converges to a $\mathcal G$-measurable limit due to Proposition \ref{prop2}(i) and Theorem \ref{th1}.

Notice also that due to Proposition \ref{prop1}(ii) the right-hand side of \eqref{V(g,h)_T^{n,11}} and \eqref{V(g,h)_T^{n,22}} coincide and, moreover, that the right-hand side of \eqref{V(g,h)_T^{n,12}} can be written as 
\begin{equation*}
3d \int_0^T  \Gamma^2_r(\sigma_s,\widetilde{\sigma}, v_s,b_s, \Sigma,  g,1) ds.
\end{equation*}

\begin{rem}\label{remark4.14}\rm
Instead of using the estimators for the second moments given in \eqref{V(g)} till \eqref{V(g,h)}, we could also use a more direct approach and consider
\begin{align}\label{V'(g)}
\begin{split}
V'(g)_T^{n} &:= 3d^2 u_n \sum_{i=0}^{[T/2du_n]-1} \left\{ f\left( \left(\overline{Z}(g)^{n,1}_{(2id + j-1)k_n}\big/\sqrt{u_n}\right)_{j=1,\ldots, d} \right) \right.\\
&\hspace{12em}\left.- f\left( \left(\overline{Z}(g)^{n,1}_{(2id+d + j-1)k_n}\big/\sqrt{u_n}\right)_{j=1,\ldots, d} \right)\right\}^2.
\end{split}
\end{align}
Notice that similar to Proposition \ref{prop2}(i) we have that
\begin{align}\label{V'(g) convergence}
(u_n^2)^{r-d}V'(g)_T^{n} 
	\toop 3d \int_0^T \Gamma'_r(\sigma_s, \widetilde{\sigma}, v_s,b_s, \Sigma,  g,1) ds.
\end{align}
Then \eqref{finalclt} also holds upon replacing $V(n,T,g,h)$ defined at \eqref{V(g)n} by 
\begin{equation}\label{V'(g)n}
V'(n,T,g):= \frac{2V'(g)_T^{n}  }{(S(g)_T^{n,1}\log 2)^2 }.
\end{equation}
\qed
\end{rem}

The feasible central limit theorem at \eqref{finalclt} opens the door to hypothesis testing. Let us define the rejection regions via
\begin{align}
\label{test1} \mathcal{C}_{\alpha}^{n, =r}&:= \{\omega: ~|\widehat{R}(g,h)_T^n- r| > z_{1-\alpha/2} \sqrt{u_n V(n,T,g,h)}\}, \\[1.5 ex]
\label{test2} \mathcal{C}_{\alpha}^{n, \leq r}&:= \{\omega: ~\widehat{R}(g,h)_T^n- r > z_{1-\alpha} \sqrt{u_n V(n,T,g,h)}\},
\end{align}  
where $z_\alpha$ denotes the $\alpha$-quantile of the standard normal distribution. Obviously, the rejection region $\mathcal{C}_{\alpha}^{n, =r}$
corresponds to $H_0:~R_T=r$ vs. $H_1:~R_T \not=r$, while $\mathcal{C}_{\alpha}^{n, \leq r}$ corresponds to 
$H_0:~R_T\leq r$ vs. $H_1:~R_T >r$. The asymptotic level and consistency of the test are demonstrated in the following corollary.

\begin{cor}\label{corollary: testing}
Assume that conditions (A) and (E) are satisfied and the weight functions $g,h$ fulfill the assumptions of Proposition \ref{prop1}(ii). \\
(i) The test defined through \eqref{test1} has asymptotic level $\alpha$ in the sense that
\begin{equation}\label{test 1a}
A\subset \Omega_T^r, ~\mathbb P(A)>0 \ \Longrightarrow \  \mathbb P(\mathcal{C}_{\alpha}^{n, =r}|A)\rightarrow \alpha . 
\end{equation}
Furthermore, the test is consistent, i.e.
\begin{equation}\label{test 1b}
\mathbb P(\mathcal{C}_{\alpha}^{n, =r} \cap (\Omega_T^r)^c) \rightarrow \mathbb P((\Omega_T^r)^c).
\end{equation}
(ii) The test defined through \eqref{test2} has asymptotic level at most $\alpha$ in the sense that
\begin{equation}\label{test 2a}
A\subset \Omega_T^{\leq r}, ~\mathbb P(A)>0 \ \Longrightarrow \ \limsup_{n\to\infty} \mathbb P(\mathcal{C}_{\alpha}^{n, \leq r}|A)
\le \alpha . 
\end{equation}
Furthermore, the test is consistent, i.e.
\begin{equation}\label{test 2b}
\mathbb P(\mathcal{C}_{\alpha}^{n, \leq r} \cap (\Omega_T^{\leq r})^c) \rightarrow \mathbb P((\Omega_T^{\leq r})^c).
\end{equation}
\end{cor}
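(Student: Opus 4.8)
The plan is to reduce the four assertions to two results already in hand: the feasible central limit theorem \eqref{finalclt} of Proposition \ref{prop2}(ii) for the two level statements, and the ratio convergence \eqref{finalconv} for the two consistency statements. Writing $T_n$ for the left-hand side of \eqref{finalclt}, I would first record two preliminary facts that are used repeatedly. On each $\Omega_T^{\rho}$, $\rho\in\{0,\ldots,d\}$, the convergence \eqref{finalconv} gives $\widehat R(g,h)_T^n\toop\rho=R_T$ (apply $x\mapsto d-\log(x)/\log 2$ to the limit $2^{d-\rho}$). Moreover, combining Proposition \ref{prop2}(i) with Theorem \ref{th1} and balancing the powers of $u_n$ in the numerator and denominator of the definition \eqref{V(g)n}, I would show that $V(n,T,g,h)$ converges in probability on each $\Omega_T^{\rho}$ to a finite, $\mathcal G$-measurable limit, which is strictly positive on the boundary set $\Omega_T^r$ by Lemma \ref{lemma:Gamma}; since $u_n\to0$, the random threshold $\sqrt{u_n V(n,T,g,h)}$ then vanishes in probability. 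Throughout I would use that $\Omega_T^{\rho}$ and any set $A$ appearing in the statement are $\mathcal G$-measurable, because $R_T$ is a functional of $\sigma$ from assumption (A), and that the limit $\Phi$ in \eqref{finalclt} is standard normal and independent of $\mathcal G$.

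For the asymptotic level of the two-sided test \eqref{test 1a}, I would fix $A\subset\Omega_T^r$ with $\mathbb P(A)>0$. On $\Omega_T^r$ we have $R_T=r$, whence $\sqrt{u_n}\sqrt{V}=\sqrt{u_n V}$ gives the exact identity $\mathcal C_\alpha^{n,=r}\cap A=\{|T_n|>z_{1-\alpha/2}\}\cap A$ from the definition \eqref{test1}. Testing the stable convergence $T_n\stab\Phi$ against $Z=\mathbf 1_A$ and sandwiching the indicator of $\{|x|>z_{1-\alpha/2}\}$ between continuous functions, I would obtain $\mathbb E[\mathbf 1_{\{|T_n|>z_{1-\alpha/2}\}}\mathbf 1_A]\to\mathbb P(|\Phi|>z_{1-\alpha/2})\,\mathbb P(A)=\alpha\,\mathbb P(A)$, where the factorisation uses the independence of $\Phi$ and $\mathcal G\supset\sigma(A)$ and the atomlessness of the normal law (so that $\{|x|=z_{1-\alpha/2}\}$ is a $\Phi$-continuity set). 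Dividing by $\mathbb P(A)$ yields \eqref{test 1a}.

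For the one-sided level \eqref{test 2a}, I would take $A\subset\Omega_T^{\le r}$ and split $A=\bigcup_{\rho\le r}(A\cap\Omega_T^{\rho})$. On the boundary piece $A\cap\Omega_T^r$ the previous argument applies verbatim and gives $\mathbb P(\mathcal C_\alpha^{n,\le r}\cap A\cap\Omega_T^r)\to\alpha\,\mathbb P(A\cap\Omega_T^r)$, now using that $\{x>z_{1-\alpha}\}$ is a $\Phi$-continuity set. On each interior piece $A\cap\Omega_T^{\rho}$ with $\rho<r$, the first preliminary fact gives $\widehat R(g,h)_T^n-r\toop\rho-r<0$ while the threshold vanishes, so $(\widehat R(g,h)_T^n-r)/\sqrt{u_n V(n,T,g,h)}\toop-\infty$ and $\mathbb P(\mathcal C_\alpha^{n,\le r}\cap A\cap\Omega_T^{\rho})\to0$. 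Summing over $\rho\le r$ and dividing by $\mathbb P(A)$ then produces $\limsup_n\mathbb P(\mathcal C_\alpha^{n,\le r}\mid A)=\alpha\,\mathbb P(A\cap\Omega_T^r)/\mathbb P(A)\le\alpha$.

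Finally, for the two consistency statements \eqref{test 1b} and \eqref{test 2b}, I would argue on each $\Omega_T^{\rho}$ with $\rho\neq r$ (respectively $\rho>r$) that $\widehat R(g,h)_T^n-r\toop\rho-r$, which is nonzero (respectively strictly positive), while the threshold vanishes; hence the relevant normalised statistic diverges in probability ($|\widehat R(g,h)_T^n-r|/\sqrt{u_n V(n,T,g,h)}\toop+\infty$, respectively its one-sided version), and the test rejects with probability tending to one. Summing $\mathbb P(\mathcal C_\alpha^{n,=r}\cap\Omega_T^{\rho})\to\mathbb P(\Omega_T^{\rho})$ over $\rho\neq r$, respectively $\mathbb P(\mathcal C_\alpha^{n,\le r}\cap\Omega_T^{\rho})\to\mathbb P(\Omega_T^{\rho})$ over $\rho>r$, then yields the two claims. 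I expect the only genuine obstacle to be the passage from the \emph{unconditional} stable convergence in \eqref{finalclt} to \emph{conditional} rejection probabilities on the $\mathcal G$-measurable sets $A$; this is exactly where the independence of $\Phi$ from $\mathcal G$ and the absence of atoms in the normal law are indispensable, whereas all remaining steps are routine consequences of the law of large numbers and Slutsky-type arguments.
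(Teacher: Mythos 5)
Your proposal is correct and rests on the same two pillars as the paper's proof: the feasible stable CLT \eqref{finalclt} for the level statements, and the vanishing of the random threshold $\sqrt{u_n V(n,T,g,h)}$ (via Proposition \ref{prop2}(i), Theorem \ref{th1} and Lemma \ref{lemma:Gamma}) for consistency. Two steps, however, take a genuinely different route. For \eqref{test 2a} the paper exploits the monotonicity $\mathcal{C}_{\alpha}^{n,\le r}\subseteq\mathcal{C}_{\alpha}^{n,\le r'}$ for $r'\le r$ and applies the CLT at \emph{every} rank $r'\le r$, bounding the contribution of each $\Omega_T^{r'}$ by its own $\alpha$-level term; you instead use the law of large numbers ($\widehat{R}(g,h)_T^n\toop\rho$ on $\Omega_T^{\rho}$, from \eqref{finalconv}) to show that the interior pieces $\rho<r$ contribute nothing, so that only the boundary piece $\Omega_T^r$ matters. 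Your version is slightly more informative (it identifies the exact limit $\alpha\,\mathbb P(A\cap\Omega_T^r)/\mathbb P(A)$ rather than just a $\limsup$ bound), while the paper's version needs only the CLT and no consistency of $\widehat{R}(g,h)_T^n$. Likewise, for \eqref{test 1b} and \eqref{test 2b} the paper writes the normalized statistic as $\Phi+(r'-r)/\sqrt{u_nV(n,T,g,h)}$ via the CLT at the true rank $r'$ and lets the shift diverge, whereas you argue purely with the LLN plus the vanishing threshold; the two arguments are equivalent in substance. One caveat: your assertion that \emph{any} $A\subset\Omega_T^r$ is $\mathcal G$-measurable is false --- $\Omega_T^r$ itself lies in $\mathcal G$, but its subsets need not (an $A$ defined through $W'$ is a counterexample). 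What your factorization $\widetilde{\mathbb P}(\{\Phi\in\cdot\}\cap A)=\mathbb P(\Phi\in\cdot)\,\mathbb P(A)$ really requires is independence of $\Phi$ from the conditioning set; this is immediate for $\mathcal G$-measurable sets, and the paper's own proof (which conditions on arbitrary $A$ while invoking only independence of $\Phi$ from $\mathcal G$ and $\Omega_T^{r'}\in\mathcal G$) makes exactly the same leap, so this is a shared imprecision rather than a gap specific to your argument --- but you should not present it as a consequence of $R_T$ being a functional of $\sigma$.
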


\section{Simulations}
\label{sec6}
\setcounter{equation}{0}
\renewcommand{\theequation}{\thesection.\arabic{equation}}

In this section, we want to examine how well the testing procedure for the maximal rank performs in finite samples. The main focus lies on considering the convergence results in \eqref{finalclt}, \eqref{test 1a} and \eqref{test 1b}. Complementing these results, we examine how well the estimator $\widehat{R}(g,h)_T^n$ works to estimate the maximal rank $R_T$ (using the law of large numbers which is implicitly given by \eqref{finalclt}). To this end, we consider the integer-valued modification of $\widehat{R}(g,h)_T^n$ defined as 
\begin{equation}\label{R^int}
\widehat{R}^{\text{int}}(g,h)_T^n:= \max(0,\min(d,\widehat{R}(g,h)_T^n)).
\end{equation} 
We emphasize that due to the rate of convergence of $\Delta_n^{-1/6}$ we expect a worse performance in finite samples in comparison to the simulation study in \cite{JP13} (there, the rate of convergence is $\Delta_n^{-1/2}$).

\subsection{Results}
All processes are simulated on the interval $[0,1]$ and we use four different frequencies $\Delta_n = 10^{-4}, 10^{-5}$, $10^{-6}$, and $10^{-7}$. 
We remark that even the highest frequency $\Delta_n = 10^{-7}$ is nowadays available for liquid assets.
Following the simulation study in \cite{JP13}, we set $\widetilde \sigma = 2 I_d$ and due to \cite{JLMPV09} we use $\theta = 1/3$ for the pre-averaging procedure. This results in window sizes of $k_n = 157, 718$, $3\,333$, and $15\,471$, respectively. We use the weight functions $g,h$ explicitly constructed in Example \ref{ex:weight functions}. We perform 500 repetitions to uncover the finite sample properties. The following quantities are reported
\begin{itemize}
\item
$\Delta_n \colon$ the sampling frequency;
\item
$[1/3du_n] \colon$ the number of big blocks;
\item
the first four moments of the test statistic $\frac{\widehat{R}(g,h)_1^n - R_1}{\sqrt{u_nV(n,1,g,h)}} $ defined at \eqref{finalclt} to check for 
the normal approximation;
\item
$\Omega_1^r\colon$the proportion of rejection for the possible null hypotheses $\Omega_1^r$ with $r \in \{0, \ldots, d\}$ defined at \eqref{omegar} at level $\alpha = 0.05$;
\item
$\widehat{R}^{\text{int}}(g,h)_1^n\colon$the proportion of the event that the estimator $\widehat{R}^{\text{int}}(g,h)_1^n$ defined at \eqref{R^int} coincides with $R_1$.
\end{itemize}

We conduct the simulation study for the cases $d=1,2,3$. For each of them, we examine different models for the semimartingale $X$. We are interested in how robust our testing procedure is with respect to a violation of assumption (A). This can be seen in case 3, respectively, where the volatility $\sigma_t$ is not continuous. For the noise part, we always assume the covariance structure of $\Sigma= 0.0005 I_d$.

\subsubsection{$d=1$}
We consider the following four models:
\begin{itemize}
\item[(i)]
Model 1: We have vanishing drift $b_t=0$ and constant volatility $\sigma_t=1$, implying $R_1=1$.
\item[(ii)]
Model 2: We observe pure noise, so $b_t=0$ and $\sigma_t=0$, implying $R_1=0$.
\item[(iii)]
Model 3: We have a constant drift of $b_t = 1$ and a volatility of $\sigma_t = 1_{\{t\le0.5\}}$, implying $R_1=1$.
\item[(iv)] 
Model 4: We have a drift of $b_t = 1 + \sin(2t\pi)$ and a volatility of $\sigma_t = \cos(2t\pi)$, implying $R_1=1$.
\end{itemize}
The results for the four models are summarized in the following table according to their order:
\begin{center}
\begin{tabular}{c|c||c|c|c|c||c|c||c }
$\Delta_n$& $[1/3du_n]$	& 1st mt	& 2nd mt 	& 3rd mt 	& 4th mt 	& $\Omega_1^0$ 	& $\Omega_1^1$ 	& $\widehat{R}^{\text{int}}(g,h)_1^n$ \\[1mm]
\hline\hline
$10^{-4}$	& \phantom{1}21	& -0.134 	& 1.460 	& -0.363 	& \phantom{1}6.567	& 0.378	& 0.106 	& 0.734\\
$10^{-5}$	& \phantom{1}46	&  -0.075 	& 1.268	& -0.579	& \phantom{1}5.343	& 0.652	& 0.084 	& 0.862\\
$10^{-6}$	& 100			&  -0.036	& 1.089	& -0.153	& \phantom{1}3.665	& 0.934	& 0.068 	& 0.950\\
$10^{-7}$	& 215			&  -0.068	& 1.097	& -0.079	& \phantom{1}3.932	& 0.998	& 0.060 	& 0.990 \\[1mm]
\hline
$10^{-4}$	& \phantom{1}21	&  -0.056			& 1.403	& -0.403			& \phantom{1}6.206	& 0.096	& 0.454	& 0.790\\
$10^{-5}$	& \phantom{1}46	&  \phantom{-}0.020	& 1.323	& \phantom{-}0.171	& \phantom{1}5.998	& 0.084	& 0.672	& 0.860\\
$10^{-6}$	& 100			&  \phantom{-}0.006	& 1.129	& \phantom{-}0.168	& \phantom{1}3.913	& 0.062	& 0.926	& 0.952\\
$10^{-7}$	& 215			&  -0.016			& 1.024	& -0.039			& \phantom{1}3.536	& 0.048	& 1.000	& 0.992 \\[1mm]
\hline
$10^{-4}$	& \phantom{1}21	& -0.252	& 2.018	& -2.222	& 17.611			& 0.298	& 0.132	& 0.664 \\
$10^{-5}$	& \phantom{1}46	& -0.050	& 1.364	& -0.477	& \phantom{1}6.220	& 0.484	& 0.092	& 0.802 \\
$10^{-6}$	& 100			& -0.140	& 1.105	& -0.427	& \phantom{1}3.596	& 0.708	& 0.076	& 0.878 \\
$10^{-7}$	& 215			& -0.026	& 1.013	& -0.159	& \phantom{1}3.100	& 0.940	& 0.054	& 0.966 \\[1mm]
\hline
$10^{-4}$	& \phantom{1}21	& -0.373	& 1.959	& -3.042	& 15.488			& 0.310	& 0.154	& 0.684 \\
$10^{-5}$	& \phantom{1}46	& -0.231	& 1.389	& -1.271	& \phantom{1}6.902	& 0.484	& 0.076	& 0.788 \\
$10^{-6}$	& 100			& -0.115	& 1.080	& -0.489	& \phantom{1}3.475	& 0.808	& 0.058	& 0.912 \\
$10^{-7}$	& 215			& -0.048	& 0.985	& -0.257	& \phantom{1}3.129	& 0.986	& 0.052	& 0.982
\end{tabular}
\end{center}

\subsubsection{$d=2$}
We consider the following four models:
\begin{itemize}
\item[(i)]
Model 1: We have vanishing drift $b_t = 0$ and constant volatility $\sigma_t = I_2$, implying $R_1=2$.
\item[(ii)]
Model 2: We have pure noise, so $R_1=0$.
\item[(iii)]
Model 3: We have a drift of $b_t = \begin{pmatrix} \phantom{-}1 \\ -1\end{pmatrix}$, and a volatility of $\sigma_t = \begin{pmatrix}1_{\{t\le 0.5\}} & 0 \\ 0 & 1_{\{t> 0.5\}}\end{pmatrix}$, implying $R_1=1$.
\item[(iv)]
Model 4: We have a drift of $b_t = 
\begin{pmatrix}
1+ \sin(2t\pi) \\
1+ \cos(2t\pi) 
\end{pmatrix}$,
and a volatility of\\
$\sigma_t = 
\begin{pmatrix}
\cos(2t\pi) & \cos(2t\pi)  \\
\sin(2t\pi)  & \sin(2t\pi)  
\end{pmatrix}$,
implying $R_1=1$.
\end{itemize}
The results for the four models are summarized in the following table according to their order:

\resizebox{\textwidth}{!}{
\begin{tabular}{c|c||c|c|c|c||c|c|c||c }
$\Delta_n$& $[1/3du_n]$	& 1st mt	& 2nd mt 	& 3rd mt 	& 4th mt 	& $\Omega_1^0$ 	& $\Omega_1^1$ & $\Omega_1^2$ 	& $\widehat{R}^{\text{int}}(g,h)_1^n$ \\[1mm]
\hline\hline
$10^{-4}$	& \phantom{1}10	& -0.146	& 2.836	& \phantom{1}-1.952	& \phantom{1}33.933& 0.524	& 0.264	& 0.226 	& 0.616\\
$10^{-5}$	& \phantom{1}23	& -0.034	& 1.892	& \phantom{1}-0.119	& \phantom{1}13.661& 0.690	& 0.336	& 0.140	& 0.678\\
$10^{-6}$	& \phantom{1}50	& -0.033	& 1.379	& \phantom{1}-0.081	& \phantom{11}5.987	& 0.890	& 0.448	& 0.088	& 0.774\\
$10^{-7}$	& 107			& -0.059	& 1.238	& \phantom{1}-0.263	& \phantom{11}4.451	& 0.992	& 0.626	& 0.078 & 0.842 \\[1mm]
\hline
$10^{-4}$	& \phantom{1}10	& -0.097			& 3.839	& -15.410			& 297.391				& 0.212	& 0.316	& 0.558	& 0.646 \\
$10^{-5}$	& \phantom{1}23	& -0.074			& 2.054	& \phantom{1}-0.181	& \phantom{2}24.391	& 0.130	& 0.328	& 0.720	& 0.716\\
$10^{-6}$	& \phantom{1}50	& \phantom{-}0.113 	& 1.357	& \phantom{-1}0.775	& \phantom{29}6.365	& 0.104	& 0.390	& 0.926	& 0.780\\
$10^{-7}$	& 107			& -0.066			& 1.290	& \phantom{1}-0.274	& \phantom{29}4.658	& 0.084	& 0.672	& 0.992	& 0.882\\[1mm]
\hline
$10^{-4}$	& \phantom{1}10	& -0.293	& 3.747	& \phantom{1}-0.119	& 101.560			& 0.288	& 0.246	& 0.388	& 0.286 \\
$10^{-5}$	& \phantom{1}23	& -0.388	& 1.916	& \phantom{1}-1.635	& \phantom{1}11.157	& 0.236	& 0.156	& 0.428	& 0.346\\
$10^{-6}$	& \phantom{1}50	& -0.090	& 1.251	& \phantom{1}-0.396	& \phantom{10}4.483& 0.424	& 0.090	& 0.482	& 0.590 \\
$10^{-7}$	& 107			& -0.152	& 1.214	& \phantom{1}-0.501	& \phantom{10}4.777& 0.602	& 0.082	& 0.704	& 0.740\\[1mm]
\hline
$10^{-4}$	& \phantom{1}10	& -0.062	& 3.308	& \phantom{1}-6.196	& \phantom{1}88.100& 0.294	& 0.206	& 0.304	& 0.260 \\
$10^{-5}$	& \phantom{1}23	& -0.279	& 2.020	& \phantom{1}-2.432	& \phantom{1}17.677& 0.246	& 0.162	& 0.386 	& 0.418\\
$10^{-6}$	& \phantom{1}50	& -0.061	& 1.438	& \phantom{1}-0.481	& \phantom{11}6.609 & 0.434	& 0.098	& 0.450	& 0.522\\
$10^{-7}$	& 107			& \phantom{-}0.008	& 1.148	& \phantom{1}-0.123	& \phantom{11}4.544	& 0.668	& 0.058	& 0.678		& 0.756
\end{tabular}
}

\subsubsection{$d=3$}
We consider the following four models:
\begin{itemize}
\item[(i)]
Model 1: We have vanishing drift $b_t = 0$ and constant volatility $\sigma_t = I_3$. Hence, the maximal rank is $R_1=3$.
\item[(ii)]
Model 2: We have pure noise, so $R_1=0$.
\item[(iii)]
Model 3: We have a drift of $b_t = \begin{pmatrix} \phantom{-}1 \\ -1 \\ \phantom{-}5\end{pmatrix}$, and a volatility of $\sigma_t = \begin{pmatrix}1_{\{t\le 0.5\}} & 0 & 0 \\ 0 & 1_{\{t> 0.5\}} & 0 \\ 0 & 0 & 0 \end{pmatrix}$, implying $R_1=1$.
\item[(iv)]
Model 4: We have a drift of $b_t = 
\begin{pmatrix}
1+ \sin(2t\pi) \\
1+ \cos(2t\pi) \\
0
\end{pmatrix}$,
and a volatility of\\
$\sigma_t = 
\begin{pmatrix}
\cos(2t\pi) & \cos(2t\pi)	& 0 \\
\sin(2t\pi)  & \sin(2t\pi)	& 0 \\
0		& 0			& 1
\end{pmatrix}$,
implying $R_1 = 2$.
\end{itemize}
The results for the four models are summarized in the following table according to their order:

\resizebox{\textwidth}{!}{
\begin{tabular}{c|c||c|c|c|c||c|c|c|c||c }
$\Delta_n$& $[1/3du_n]$	& 1st mt	& 2nd mt 	& 3rd mt 	& 4th mt 	& $\Omega_1^0$ 	& $\Omega_1^1$ & $\Omega_1^2$ 	&  $\Omega_1^3$ 	& $\widehat{R}^{\text{int}}(g,h)_1^n$  \\[1mm]
\hline\hline
$10^{-4}$	& 	\phantom{1}7	& -0.291	& 4.854	& -11.705			& \phantom{2}195.934				& 0.622	& 0.460	& 0.344	& 0.326	& 0.556\\
$10^{-5}$	& 	15			& -0.041	& 2.631	& \phantom{1}-1.521	& \phantom{21}37.908	& 0.764	& 0.538	& 0.288	& 0.192	& 0.636\\
$10^{-6}$	& 	33			& \phantom{-}0.057	& 1.844	& \phantom{1}-0.258	& \phantom{21}10.721	& 0.930	& 0.692	& 0.368	& 0.146	& 0.702 \\
$10^{-7}$	& 	71			& \phantom{-}0.014	& 1.459	&  \phantom{-1}0.116& \phantom{291}6.382	& 0.988	& 0.870	& 0.414  	& 0.098	& 0.776\\[1mm]
\hline
$10^{-4}$	& 	\phantom{1}7	& -0.177			& 5.507	& \phantom{1}-9.964			& \phantom{2}221.460				& 0.318	& 0.370	& 0.500	& 0.674	& 0.594 \\
$10^{-5}$	& 	15			& \phantom{-}0.017	& 2.779	& \phantom{-1}2.579	& \phantom{22}59.508	& 0.174	& 0.314	& 0.570	& 0.792	& 0.652 \\
$10^{-6}$	& 	33			& \phantom{-}0.023	& 1.565	& \phantom{1}-0.003			& \phantom{225}7.529	& 0.122	& 0.308	& 0.708	& 0.940	& 0.706\\
$10^{-7}$	& 	71			& -0.035		& 1.507		& \phantom{1}-0.340			& \phantom{225}6.874	& 0.100	& 0.452	& 0.860  	& 0.990	& 0.754 \\[1mm]
\hline
$10^{-4}$	& 	\phantom{1}7	& -0.365	& 5.509	& -11.808	& \phantom{2}413.241	& 0.294	& 0.278	& 0.378	& 0.562	& 0.202\\
$10^{-5}$	& 	15			& -0.282	& 2.392	& \phantom{1}-1.297	& \phantom{24}21.282& 0.264	& 0.184	& 0.344	& 0.660	& 0.288 \\
$10^{-6}$	& 	33			& -0.333	& 1.701	& \phantom{1}-1.506	& \phantom{242}8.125& 0.254	& 0.130	& 0.418	& 0.818	& 0.392 \\
$10^{-7}$	& 	71			& -0.074	& 1.279	& \phantom{1-}0.026	& \phantom{242}5.215& 0.446	& 0.074	& 0.502  	& 0.948	& 0.574\\[1mm]
\hline
$10^{-4}$	& 	\phantom{1}7	& -0.238		& 9.259	& 70.679				& 2873.157	 		& 0.470	& 0.368	& 0.348	& 0.440 	& 0.190 \\
$10^{-5}$	& 	15			& -0.082		& 3.063	& -3.513		& \phantom{28}56.660	& 0.560	& 0.320	& 0.212	& 0.340	& 0.246 \\
$10^{-6}$	& 	33			& -0.137		& 1.762	& -0.736		& \phantom{285}9.407	& 0.644	& 0.292	& 0.140  	& 0.362	& 0.380 \\
$10^{-7}$	& 	71			& -0.054		& 1.683	& -0.691		& \phantom{285}8.858	& 0.848	& 0.412	& 0.128  	& 0.430	& 0.494
\end{tabular}
}

\subsection{Summary}

According to our theoretical results, the empirical counterpart of the first four moments, level and power seem to converge to their theoretical
analogues as $\Delta_n \to 0$. However, the speed of convergence depends on the particular model and the dimension $d$.

First, we observe that higher moments seem to converge much slower than lower moments. An extreme example is $d=3$, Model 4, where the simulated fourth
moment equals $2873.157$ at frequency $\Delta_n=10^{-4}$. This effect appears to be stronger for higher dimensions. It can be explained by the fact that the true rate of convergence is $[T/3du_n]^{1/2}$ rather than $\Delta_n^{-1/6}$, which decreases when $d$ is growing. Furthermore, there are several 
small order terms in the expansion of the main statistic, which seem to influence the finite sample performance at relatively low frequencies. This is confirmed
by the observation that we get the best simulation results for constant volatility and vanishing drift, where these lower order terms do not appear.

The approximation of power again depends on the complexity of time-varying coefficients of the model and the dimension $d$. Quite intuitively, we observe
a better power performance for alternative hypotheses, which are more distant to the true one. For instance, for $d=3$ and Model 1, where the maximal
rank is 3, the simulated powers for $\Omega_1^2$, $\Omega_1^1$ and  $\Omega_1^0$ at frequency $\Delta_n=10^{-7}$ are
$0.414$, $0.87$ and $0.988$, respectively. Finally, we remark that, although Model 3 ($d=1,2,3$) does not satisfy our assumptions since the volatility
process is not continuous, the power and level performance is well comparable with other simulated models.

\section{Proofs}
\label{sec5}
\setcounter{equation}{0}
\renewcommand{\theequation}{\thesection.\arabic{equation}}

Before presenting the proofs in detail, let us briefly outline the roadmap of this section. In Subsection \ref{sec5.1} we introduce some technical results about expansions of determinants. We justify the asymptotic expansion at \eqref{detexp} and also show some more involved results.

In Subsection \ref{sec5.2}, we show that -- using a standard localization procedure -- we obtain the stochastic decomposition explained in Remark \ref{rem2}. Moreover, we show how the law of the dominating term in the expansion can be expressed in terms of the notation introduced in Subsection \ref{sec: Notation}. 

Subsection \ref{sec5.3} is especially concerned with the proof of Proposition \ref{prop1}. To this end, we perform a very detailed analysis of the terms $\Gamma_r$ and $\Gamma'_r$ introduced at \eqref{Gammar} and their dependency on the weight function $g$. This mainly relies on an application of the Leibniz rule for the calculation of the determinants and a repeated use of the It\^o isometry to calculate the expectations.

Subsection \ref{sec5.4} deals with the proof of the main Theorem \ref{th2} and of Proposition \ref{prop2}. First, we show that -- thanks to the stochastic expansion established in Subsection \ref{sec5.2} -- the main approximation idea motivated in Subsection \ref{sec3.1} works in the stochastic setting. The second main step in the proof of Theorem \ref{th2} is the application of a stable central limit theorem for semimartingales (see e.g. \cite[Theorem IX.7.28 ]{JS02}). Proposition \ref{prop2}(i) follows along the lines of parts of the proof of Theorem \ref{th2}. Proposition \ref{prop2}(ii) follows by Theorem \ref{th2}, Theorem \ref{th1} -- which in tern is a direct consequence of Theorem \ref{th2} -- and Proposition \ref{prop2}(ii) by applying the delta method for stable convergence. Note that this procedure does only work under a proper choice of the pair of weight functions which fulfills Proposition \ref{prop1}(ii).

The proof of Corollary \ref{corollary: testing} is essentially a consequence of the stable convergence at \eqref{finalclt} and is referred to Subsection \ref{sec5.5}.

\subsection{Expansion of determinants}\label{sec5.1}
Due to Subsection \ref{sec3.1}, the key to identifying the unknown rank of a matrix $A \in \mathcal M$ is the matrix perturbation method which results in the expansion at \eqref{detexp}. While we could show the law of large numbers at \eqref{lln} with an expansion like the one at \eqref{detexp}, we need a higher order expansion of the determinant to derive the central limit theorem at \eqref{clt}. Therefore, we shall introduce some additional notation to the one in Subsection \ref{sec3.1} which is similar to the one introduced in \cite{JP13}.

In the sequel, $\|A\|$ denotes the Euclidean norm of a matrix $A\in \mathcal M$. For any positive integer $m\ge1$ we denote with $\mathcal P_m$ the set of all multi-integers $\mathbf p =(p_1,\ldots, p_m)$ with $p_j\ge0$ and $p_1+\cdots+p_m=d$, and $\mathcal I_\mathbf{p}$ the set of all partitions $\I=(I_1, \ldots, I_m)$ of $\{0,\ldots, d\}$ such that $I_j$ contains exactly $p_j$ points.
For $\mathbf p \in \mathcal P_m$, $\I\in\mathcal I_\mathbf{p}$ and $A_1,\ldots, A_m\in \M$, we call $G^\I_{A_1, \ldots, A_m}$ the matrix in $\mathcal M$ whose $i$th column is the $i$th column of $A_j$ when $j\in I_j$.

Due to the multi-linearity property of the determinant we have the following identity for all $A_1, \ldots, A_m \in \mathcal M$
\begin{equation}
\label{eqn: det sum}
\det(A_1 + \cdots+ A_m)= \sum_{\mathbf p \in \mathcal P_m} \sum_{\I\in\mathcal I_p} \det(G^\I_{A_1, \ldots, A_m}).
\end{equation}
For $A,B,C\in\M$ and $r\in \{0,\ldots,d\}$, recalling \eqref{gamma}, we recover the identity
\begin{equation}\label{gamma2}
\gamma_r(A,B)= \sum_{\I\in \mathcal I_{(r,d-r)}}\det(G^\I_{A, B})
\end{equation}
and set
\begin{equation}\label{gamma'}
\gamma_r'(A,B,C):= \sum_{\I\in \mathcal I_{(r,d-r-1,1)}}\det(G^\I_{A, B, C}),
\end{equation}
with the convention that $\gamma_{-1}(A,B)=0$ and $\gamma'_{d}(A,B,C)=0$. Let $A,B,C,D \in \M$ and $\rank(A)\le r$. Using \eqref{eqn: det sum} we obtain the asymptotic expansion
\begin{align}
\label{asyexp}
&\det(A+\lambda B + \lambda^2 C + \lambda^2 D) 
= \lambda^{d-r} \ga_r(A,B) \\
&\qquad+ \lambda^{d-r+1} \big(\ga_{r-1} (A,B) + \ga'_r(A,B,C) + \ga'_r(A,B,D)  \big) + O(\lambda^{d-r+2}) \qquad \text{as} \quad \lambda\downarrow 0. \nonumber
\end{align}
This observation gives rise to the following lemma (see also \cite[Lemma 6.2]{JP13}).

\begin{lem}
There is a constant $K>0$ such that for all $r\in\{0,\ldots, d\}$, all $\lambda \in(0,1]$ and all $A,B,C,D\in\M$ with $\rank(A)\le r$ we have with $\Lambda = \|A\| + \|B\| + \|C\| + \|D\|$
\begin{multline}\label{first inequality}
\left| \det(A+\lambda B + \lambda^2 C + \lambda^2 D) - \lambda^{d-r} \ga_r(A,B) \right.\\
\left. - \lambda^{d-r+1} 
(\ga_{r-1}(A,B) + \ga_r'(A,B,C) )\right| 
\le K \lambda^{r-d+1} \Lambda^{d-1} (\lambda\Lambda + \|D\|),
\end{multline}
\begin{multline}
\label{eqn: inequality}
\Big | \frac{1}{\lambda^{2d-2r}} \det(A+\lambda B + \lambda^2 C + \lambda^2 D)^2 - \ga_r(A,B)^2 \\
-2\lambda \ga_r(A,B)(\ga_{r-1}(A,B) + \ga_r'(A,B,C))\Big | 
\le K\lambda \Lambda^{2d-1} (\lambda\Lambda + \|D\|).
\end{multline}
If further $\lambda'\in(0,1]$, $A',B',C',D'\in\M$ with $\rank(A')\le r$ and $\Lambda' = \|A'\| + \|B'\| + \|C'\| + \|D'\|$, then
\begin{multline}
\label{eqn: (6.6)}
\Big| \frac{1}{(\lambda\lambda')^{2d-2r}} \det(A+\lambda B + \lambda^2 C + \lambda^2 D)^2 \det(A'+\lambda' B' + \lambda'^2 C' + \lambda'^2 D')^2 \\
 -\ga_r(A,B)^2 \ga_r(A',B')^2 \Big| 
 \le  K(\lambda + \lambda')\Lambda\Lambda'. 
\end{multline}
\end{lem}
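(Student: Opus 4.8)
The plan is to obtain all three estimates from the single multilinearity identity \eqref{eqn: det sum}, applied with $m=4$ to the summands $A$, $\lambda B$, $\lambda^2 C$, $\lambda^2 D$. Each resulting term $\det(G^{\I}_{A,\lambda B,\lambda^2 C,\lambda^2 D})$ attached to a partition $\I\in\mathcal I_{\mathbf p}$, $\mathbf p=(p_1,p_2,p_3,p_4)\in\mathcal P_4$, factors the power $\lambda^{p_2+2p_3+2p_4}$ out of its columns. The decisive structural input is $\rank(A)\le r$: as soon as $p_1>r$, the matrix $G^{\I}$ contains more than $r$ columns of $A$, hence linearly dependent columns, so its determinant vanishes. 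Consequently only partitions with $p_1\le r$ contribute, and for these the exponent equals $p_2+2p_3+2p_4=d-p_1+p_3+p_4\ge d-r$. The value $d-r$ is reached solely by $\mathbf p=(r,d-r,0,0)$, giving $\lambda^{d-r}\ga_r(A,B)$ via \eqref{gamma2}, whereas $d-r+1$ is reached exactly by $(r-1,d-r+1,0,0)$, $(r,d-r-1,1,0)$ and $(r,d-r-1,0,1)$, which by \eqref{gamma'} produce $\ga_{r-1}(A,B)$, $\ga_r'(A,B,C)$ and $\ga_r'(A,B,D)$. This turns the asymptotic expansion \eqref{asyexp} into an exact, finite decomposition.

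For \eqref{first inequality} I would note that its left-hand side is precisely $\lambda^{d-r+1}\ga_r'(A,B,D)$ plus the sum of all terms of exponent at least $d-r+2$. The first piece is controlled by Hadamard's inequality, $|\det G^{\I}|\le\prod_i\|G^{\I}_i\|\le\Lambda^{d-1}\|D\|$, since exactly one column stems from $D$ and the remaining $d-1$ from $A$ or $B$; this produces the $\|D\|$ part of the bound. For each remaining term I would again apply Hadamard, $|\det G^{\I}|\le\Lambda^d$, and use $\lambda\le1$ to replace its exponent by $\lambda^{d-r+2}$. Since the number of partitions is bounded by a constant depending only on $d$, their sum is of order $\lambda^{d-r+2}\Lambda^d=\lambda^{d-r+1}\Lambda^{d-1}\cdot\lambda\Lambda$, which is the $\lambda\Lambda$ part. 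Adding the two pieces yields the estimate asserted in \eqref{first inequality}.

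To pass to \eqref{eqn: inequality} I would divide the previous estimate by $\lambda^{d-r}$ and write $Q:=\lambda^{r-d}\det(A+\lambda B+\lambda^2 C+\lambda^2 D)=\ga_r(A,B)+\lambda\Theta+e$, where $\Theta:=\ga_{r-1}(A,B)+\ga_r'(A,B,C)$ and $e$ obeys $|e|\le K\lambda\Lambda^{d-1}(\lambda\Lambda+\|D\|)$. Squaring,
\[
Q^2-\ga_r(A,B)^2-2\lambda\ga_r(A,B)\Theta = 2\ga_r(A,B)\,e+(\lambda\Theta+e)^2,
\]
and I would bound the right-hand side termwise using the Hadamard estimates $|\ga_r(A,B)|,|\Theta|\le K\Lambda^d$ together with the elementary facts $\|D\|\le\Lambda$ and $\lambda\le1$, so that $\lambda\Lambda+\|D\|\le2\Lambda$. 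This last observation is exactly what allows the quadratic term $e^2$ to be absorbed into one factor $(\lambda\Lambda+\|D\|)$, and every contribution then collapses to the form $K\lambda\Lambda^{2d-1}(\lambda\Lambda+\|D\|)$ claimed in \eqref{eqn: inequality}.

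Finally, \eqref{eqn: (6.6)} would follow from the telescoping identity $\hat a\hat b-ab=\hat a(\hat b-b)+b(\hat a-a)$ with $a=\ga_r(A,B)^2$, $\hat a=Q^2$, $b=\ga_r(A',B')^2$, $\hat b=Q'^2$, the primed quantities being defined analogously from $\lambda',A',B',C',D'$. The bounds already established give $|\hat a-a|\le K\lambda\Lambda^{2d}$ and $|\hat b-b|\le K\lambda'\Lambda'^{2d}$, while the same Hadamard estimates give $|\hat a|\le K\Lambda^{2d}$ and $|b|\le K\Lambda'^{2d}$; the two summands are therefore of order $\lambda'\Lambda^{2d}\Lambda'^{2d}$ and $\lambda\Lambda^{2d}\Lambda'^{2d}$, which is the desired estimate. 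The genuinely delicate point throughout is the isolation of the order-$(d-r+1)$ term $\ga_r'(A,B,D)$: it is the only contribution at that order which is not subtracted off, it carries $\|D\|$ linearly, and it is precisely what forces the non-homogeneous error shape $\Lambda^{d-1}(\lambda\Lambda+\|D\|)$ rather than a pure power of $\lambda$. Once this term is correctly separated, the remaining steps are routine algebra exploiting $\|D\|\le\Lambda$ and $\lambda,\lambda'\le1$.
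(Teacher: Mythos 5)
Your proof is correct and follows essentially the same route as the paper's own (very condensed) proof: both rest on the exact multilinear expansion \eqref{eqn: det sum}, the observation that any $G^{\I}$ containing more than $r$ columns of $A$ has vanishing determinant, the Hadamard-type column bound $|\det(G^{\I}_{A,\lambda B,\lambda^2 C,\lambda^2 D})|\le K\lambda^{p_2+2p_3+2p_4}\Lambda^{d-p_4}\|D\|^{p_4}$, and squaring to pass from \eqref{first inequality} to \eqref{eqn: inequality}.

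One caveat, which concerns the statement rather than your argument. What your telescoping step actually proves for \eqref{eqn: (6.6)} is the bound $K(\lambda+\lambda')\Lambda^{2d}(\Lambda')^{2d}$, not the displayed $K(\lambda+\lambda')\Lambda\Lambda'$; you call the former ``the desired estimate'' without noting the mismatch. The displayed bound is in fact unprovable: the left-hand side of \eqref{eqn: (6.6)} is homogeneous of degree $4d$ under the scaling $(A,\ldots,D,A',\ldots,D')\mapsto(tA,\ldots,tD,tA',\ldots,tD')$, while $\Lambda\Lambda'$ has degree $2$, so no universal constant $K$ can work unless the left-hand side vanishes identically. Hence the paper's exponents there are a typo and yours are the correct ones; the same holds for your tacit reading of the factor $\lambda^{r-d+1}$ in \eqref{first inequality} as $\lambda^{d-r+1}$, which is what consistency with \eqref{eqn: inequality} (division by $\lambda^{d-r}$) requires. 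In the lemma's only application (the proof of Lemma \ref{lemma: 5.4}), the matrices involved have all moments bounded, so these corrections change nothing downstream.
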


\begin{proof}
The inequalities at \eqref{first inequality} and \eqref{eqn: (6.6)} essentially follow from the asymptotic expansion at \eqref{asyexp} and the fact that there is a $K>0$ such that for any $\mathbf p \in \mathcal P_4$, $\I \in \mathcal I_{\mathbf p}$ and $\lambda \in (0,1]$ we have $|\det (G^\I_{A,\lambda B, \lambda^2 C, \lambda^2 D})| = \lambda^{p_2 + 2p_3 + 2p_4} |\det (G^\I_{A,B,C,D})|\le K \lambda^{p_2 + 2p_3 + 2p_4} \Lambda^{d-p_4} \|D\|^{p_4}$. \eqref{eqn: inequality} follows from \eqref{first inequality} by taking squares.
\end{proof}

\subsection{The stochastic decomposition}\label{sec5.2}
Under assumption (A) and by a standard localization procedure (see e.g. \cite[Section 3]{BGJPS06}), it is no restriction to make the following technical assumption.

\nib Assumption (A1): \rm Assumption (A) holds and the processes $X_t$, $b_t$, $\sigma_t$, $a_t$, $v_t$, $a'_t$, $v'_t$, $a''_t$, $v''_t$ defined at \eqref{X} and \eqref{bsigma} are uniformly bounded in $(\omega, t)$.
\qed
\vsq

We make the convention that all constants are denoted by $K$, or $K_p$ if they depend on an additional parameter $p$. The constants never depend on $T,t,n, i,j$. To ease notation, we use generic constants that may change from line to line. 
We introduce the filtration $\mathcal (\mathcal H_t)_{t\in[0,T]}$ defined as
\[
\mathcal H_t := \mathcal F_t \vee \sigma(\varepsilon),
\]
where $\sigma(\varepsilon)$ is the $\sigma$-field generated by the whole process $(\ep)_{t\in[0,T]}$.
For any process $V$ and for the filtrations $(\mathcal F_t)_{t\in[0,T]}$, $(\mathcal H_t)_{t\in[0,T]}$ and $\kappa =1,2$, we will use the simplifying notation
\begin{equation} \label{eqn: filtrations}
V_i^{n,\kappa} = V_{(3i +\kappa-1)du_n}, \qquad \mathcal F_i^{n,\kappa} = \mathcal F_{(3i +\kappa-1)du_n}, \qquad \mathcal H_i^{n,\kappa} = \mathcal H_{(3i +\kappa-1)du_n}.
\end{equation}
Note that we have the `nesting property' $\mathcal F_i^{n,1}\subset \mathcal F_i^{n,2}$ and $\mathcal H_i^{n,1} \subset \mathcal H_i^{n,2}$, respectively.
Now, we show that under (A) we can obtain the stochastic decomposition at \eqref{stoch decomp} explained in Remark \ref{rem2}. To do so, we notice (see \cite[Section 6]{JP13}) that under (A), and for any $z\le t\le s$, we have the following expansion for the increment 
$X_s-X_t = \int_t^s b_u\,ds + \int_t^s \sigma_s\,dW_s$
(using vector notation):
\begin{align*}
&\int_t^s b_u\,du = \alpha_1 + \alpha_2 + \alpha_3 + \alpha_4, \\
&\int_t^s \sigma_u\,dW_u= \alpha_5 +\alpha_6 + \alpha_7 + \alpha_8 + \alpha_9 + \alpha_{10} +\alpha_{11},
\end{align*}
where
\begin{align}
\label{eqn: expansion 4}
& \alpha_1 = b_z(s-t),& 
&\alpha_2= \int_t^s \left(\int_z^u a'_w\,dw\right)\,du, \\
&\alpha_3= v'_z\int_t^s (W_u-W_z)\, du, & \nonumber
& \alpha_4 = \int_t^s \left(\int_z^u (v'_w-v'_z)\,dW_w\right)\,du, \\
& \alpha_5= \sigma_z(W_s-W_t),&\nonumber
&\alpha_6= a_z\int_t^s (u-z)\,dW_u,\\
&\alpha_7= \int_t^s\left(\int_z^u(a_w-a_z)\,dw\right)\,dW_u, &\nonumber
& \alpha_8= v_z\int_t^s(W_u-W_z)\,dW_u,\\
&\alpha_9= \int_t^s\left(\int_z^u\left(\int_z^w a''_r\,dr\right)\,dW_w \right)\, dW_u, &\nonumber
& \alpha_{10}= v''_z \int_t^s\left(\int_z^u(W_w-W_z)\,dW_w\right)\,dW_u, \\
& \alpha_{11}= \int_t^s\left(\int_z^u \left(\int_z^w(v''_r - v''_z)\,dW_r\right)\,dW_w\right)\,dW_u.\nonumber
\end{align}
By the Burkholder-Gundy inequality (see e.g. \cite{Yor}) we have under (A1) for all $p,t,s>0$ and for $V= X,\sigma, b,v$ that 
\begin{equation}\label{Burkholder}
\E[\sup_{u\in[0,s]} \|V_{t+u} - V_t \|^p\,|\,\mathcal F_t]\le K_p \, s^{p/2}.
\end{equation}
We set
\begin{equation*}
\eta_{t,s} = \sup_{u\in[0,s], \ V=a,v', v''} \|V_{t+u} - V_t\|^2,
\quad
n_i^{n,\kappa} = \sqrt{\E[\eta_{(3i +\kappa-1)du_n, \kappa du_n}\,|\,\mathcal F_i^{n,\kappa}]}.
\end{equation*}
Using the Burkholder-Gundy inequality and H\"older inequality leads to (recall that $z\le s$)
\begin{align}
\label{eqn: estimates}
	\E[\|\alpha_j\|^p\,|\,\mathcal F_z]\le
	\begin{cases}
		K_p(s-z)^{p/2} & \text{if } j=5 \\
		K_p(s-z)^{p} & \text{if } j=1,8 \\
		K_p(s-z)^{3p/2} & \text{if } j=3,6,10 \\
		K_p(s-z)^{2p} & \text{if } j=2,9 \\
		K_p(s-z)^{3p/2} \,\E[\eta_{z,s-z}^p \,|\, \mathcal F_z]  & \text{if } j=4,7,11.
	\end{cases}
\end{align}

Let $g$ be a weight function (see Subsection \ref{sec3.2}) and $g^n$ its discretization introduced at \eqref{g^n}. For $\kappa = 1,2$ we define the function
\[
g_{n,\kappa}(x):= g^n(\kappa u_n x) = \sum_{j=1}^{k_n-1} g\left(\frac{j}{k_n}\right) \mathbf{1}_{(\kappa(j-1)\Delta_n, \kappa j \Delta_n]}(x).
\]
Using \eqref{eqn: expansion 4} with $z=(3i + \kappa - 1)du_n, t= ((3i + \kappa - 1)d+\kappa(j-1))u_n, s = ((3i + \kappa - 1)d+\kappa j)u_n$  with $i \in\{ 0,\ldots, [T/3d u_n]-1\}, j \in\{1,\ldots, d\}$ and $\kappa=1,2$ we then obtain the stochastic decomposition at \eqref{stoch decomp}, namely
\begin{align*}
&\frac{1}{\sqrt{\kappa u_n}} \text{mat} \left(\overline Z(g)^{n,\kappa}_{(3i + \kappa - 1)dk_n}, \cdots, \overline Z(g)^{n,\kappa}_{((3i + \kappa - 1)d+\kappa(d-1))k_n}\right)\\[1.5 ex]
& = A(g)_{i}^{n,\kappa}
+ \sqrt{\kappa u_n} \left(B(1,g)_{i}^{n,\kappa} + B(2,g)_{i}^{n,\kappa} + B(3,g)_{i}^{n,\kappa} \right) + \kappa u_n C(g)_{i}^{n,\kappa} + \kappa u_n D(g)_{i}^{n,\kappa},  
\end{align*}
where (using vector notation)
\begin{align*}
	A(g)_{i,j}^{n,\kappa} 
	&= \frac{\sigma_i^{n,\kappa}}{\sqrt{\kappa u_n}}
		\int_{((3i + \kappa - 1)d+\kappa(j-1))u_n}^{((3i + \kappa - 1)d+\kappa j)u_n}
		g_{n,\kappa}(s - ((3i + \kappa - 1)d+\kappa(j-1))u_n)
		\,d W_s, \\
	B(1,g)_{i,j}^{n,\kappa}
	&= 	\frac{b_i^{n,\kappa}}{\kappa u_n}
		\int_{((3i + \kappa - 1)d+\kappa(j-1))u_n}^{((3i + \kappa - 1)d+\kappa j)u_n}
		g_{n,\kappa}(s - ((3i + \kappa - 1)d+\kappa(j-1))u_n)
		\,ds \\
	&\hspace{-4em}
		+ \frac{v_i^{n,\kappa}}{\kappa u_n}
		\int_{((3i + \kappa - 1)d+\kappa(j-1))u_n}^{((3i + \kappa - 1)d+\kappa j)u_n}
		g_{n,\kappa}(s - ((3i + \kappa - 1)d+\kappa(j-1))u_n)\\
	&	\quad \times	\left(W_s - W_{(3i + \kappa - 1)du_n}\right) dW_s,\\
	B(2,g)_{i,j}^{n,\kappa}
	&= \frac{\widetilde\sigma}{\sqrt{\kappa u_n}}
		\int_{((3i + \kappa - 1)d+\kappa(j-1))u_n}^{((3i + \kappa - 1)d+\kappa j)u_n}
		g_{n,\kappa}(s - ((3i + \kappa - 1)d+\kappa(j-1))u_n)
		\,d W'_s, \\
	B(3,g)_{i,j}^{n,\kappa}
	&=	 \frac{1}{\kappa u_n} \overline \ep(g)^{n,\kappa}_{((3i + \kappa - 1)d+\kappa(j-1))k_n} , \\
	C(g)_{i,j}^{n,\kappa}
	&= 	\frac{a_i^{n,\kappa}}{(\kappa u_n)^{3/2}}
		\int_{((3i + \kappa - 1)d+\kappa(j-1))u_n}^{((3i + \kappa - 1)d+\kappa j)u_n}
		g_{n,\kappa}(s - ((3i + \kappa - 1)d+\kappa(j-1))u_n)\\
	&	\quad \times	
		\left(s -(3i + \kappa - 1)du_n\right) dW_s  \\
	&\hspace{-4em}	
		+\frac{{v'}_i^{n,\kappa}}{(\kappa u_n)^{3/2}}
		\int_{((3i + \kappa - 1)d+\kappa(j-1))u_n}^{((3i + \kappa - 1)d+\kappa j)u_n}
		g_{n,\kappa}(s - ((3i + \kappa - 1)d+\kappa(j-1))u_n)\\
	&	\quad \times 	\left(W_s - W_{(3i + \kappa - 1)du_n}\right) ds \\
	&\hspace{-4em}	
		+\frac{{v''}_i^{n,\kappa}}{(\kappa u_n)^{3/2}}
		\int_{((3i + \kappa - 1)d+\kappa(j-1))u_n}^{((3i + \kappa - 1)d+\kappa j)u_n}
		g_{n,\kappa}(s - ((3i + \kappa - 1)d+\kappa(j-1))u_n) \\
	&	\quad \times \left(\int_{(3i + \kappa - 1)du_n}^s \left(W_u-W_{(3i + \kappa - 1)du_n}\right)dW_u\right) dW_s,
\end{align*}
and $D(g)_{i,j}^{n,\kappa}$ is the remainder term. In the sequel, we will make the convention that $B(g)_{i}^{n,\kappa} := B(1,g)_{i}^{n,\kappa} + B(2,g)_{i}^{n,\kappa} + B(3,g)_{i}^{n,\kappa}$. With the following lemma, we can deduce that under assumption (A1) the $\R^{d\times d}$-valued sequences $ A(g)_{i}^{n,\kappa}, B(g)_{i}^{n,\kappa}, C(g)_{i}^{n,\kappa}, D(g)_{i}^{n,\kappa}$ are tight (see also equation (6.15) in \cite{JP13}).

\begin{lem}
\label{lemma: boundedness}
Let the assumptions (A1) and (E) be statisfied.
For $p\ge 1$ there is a $K_p>0$ such that we have the following estimates
\begin{align*}
	&\E\left[\|A(g)_{i,j}^{n,\kappa} \|^p
	+\|B(g)_{i,j}^{n,\kappa} \|^p\,
	+\|C(g)_{i,j}^{n,\kappa} \|^p \,
	\big|\,\mathcal F_i^{n,\kappa}\right]\le K_p, \\
	&\E\left[\|D(g)_{i,j}^{n,\kappa} \|^p\,
	\big|\,\mathcal F_i^{n,\kappa}\right]
	\le K_p\left(u_n^{p/2} + (\eta_i^{n,\kappa})^{p\wedge 2} \right)
	\le K_p.
\end{align*}
\end{lem}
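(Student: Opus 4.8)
The plan is to estimate the four sequences one at a time, exploiting that---by construction of the stochastic decomposition in Subsection~\ref{sec5.2}---each summand is an integral against $dW$, $dW'$ or $ds$, or a pre-averaged noise term, whose order in $u_n$ is matched to the normalising powers $(\kappa u_n)^{-1/2}$, $(\kappa u_n)^{-1}$, $(\kappa u_n)^{-3/2}$. The two workhorses are the conditional Burkholder--Gundy inequality~\eqref{Burkholder} together with its Lebesgue counterpart (conditional H\"older/Minkowski), and the elementary bounds $\int_0^{\kappa u_n} g_{n,\kappa}^2(s)\,ds = O(u_n)$ and $\int_0^{\kappa u_n} g_{n,\kappa}(s)\,ds = O(u_n)$, valid because $g_{n,\kappa}$ is uniformly bounded and supported on an interval of length $\kappa u_n$ (cf.\ Remark~\ref{rem4}). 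Throughout, the uniform boundedness of all coefficient processes guaranteed by (A1) is used without further mention.

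First I would treat the `Gaussian' terms $A(g)_{i,j}^{n,\kappa}$ and $B(2,g)_{i,j}^{n,\kappa}$: a bounded $\mathcal F_i^{n,\kappa}$-measurable factor $\sigma_i^{n,\kappa}$ (respectively the deterministic $\widetilde\sigma$) times a Wiener integral of $g_{n,\kappa}$. Conditional Burkholder--Gundy gives an $L^p$-bound of order $(\int g_{n,\kappa}^2)^{p/2} = O(u_n^{p/2})$, cancelled exactly by the prefactor $(\kappa u_n)^{-p/2}$; for $B(2)$ one additionally uses that $W'$ is independent of $\mathcal G \supseteq \mathcal F_i^{n,\kappa}$ to drop the conditioning. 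The terms $B(1,g)_{i,j}^{n,\kappa}$ and $C(g)_{i,j}^{n,\kappa}$ are treated with the same two tools; they are either $ds$-integrals or carry an extra factor $(W_s - W_{(3i+\kappa-1)du_n})$, $(s-(3i+\kappa-1)du_n)$ or an inner It\^o integral, each contributing---via~\eqref{eqn: estimates}---a further power $u_n^{1/2}$ or $u_n$ in $L^p$ that precisely offsets the heavier normalisations $(\kappa u_n)^{-1}$ and $(\kappa u_n)^{-3/2}$, again yielding the bound $K_p$.

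The noise term $B(3,g)_{i,j}^{n,\kappa} = (\kappa u_n)^{-1}\,\overline\ep(g)^{n,\kappa}_{((3i+\kappa-1)d+\kappa(j-1))k_n}$ is where the window size~\eqref{kn} is decisive. Writing $\overline\ep(g)^{n,\kappa}$ through the summation-by-parts representation in~\eqref{prereturn}, adapted to frequency $\kappa\Delta_n$, as a weighted sum of i.i.d.\ centred Gaussian vectors with weights $g((l+1)/k_n)-g(l/k_n)$, Assumption (E) gives $\E[\|\overline\ep(g)^{n,\kappa}\|^p] \le K_p(\sum_l (g((l+1)/k_n)-g(l/k_n))^2)^{p/2} = K_p(\psi_1(g^n)/k_n)^{p/2} = O(k_n^{-p/2})$ by~\eqref{psi_1(gn)}; since the noise at times inside the current block is independent of $\mathcal F_i^{n,\kappa}$, the conditional bound is the same. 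Consequently $\E[\|B(3)\|^p\mid\mathcal F_i^{n,\kappa}] \le K_p(\kappa u_n)^{-p}k_n^{-p/2} = K_p\,\kappa^{-p}k_n^{-3p/2}\Delta_n^{-p}$, and the calibration $k_n\Delta_n^{2/3}\to\theta$ turns $k_n^{-3p/2}\Delta_n^{-p}$ into an $O(1)$ quantity---exactly the balance anticipated in Remark~\ref{rem2}.

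Finally, the remainder $D(g)_{i,j}^{n,\kappa}$ collects the higher-order contributions $\alpha_2,\alpha_4,\alpha_7,\alpha_9,\alpha_{11}$ from~\eqref{eqn: expansion 4} in their pre-averaged, $(\kappa u_n)^{-3/2}$-rescaled form. Applying~\eqref{eqn: estimates}, the genuinely higher-order terms $\alpha_2,\alpha_9$ (of order $(s-z)^{2p}$) produce the $u_n^{p/2}$-part of the bound, while $\alpha_4,\alpha_7,\alpha_{11}$, which involve the increments of $a,v',v''$, are controlled by a conditional moment $\E[\eta^{p/2}\mid\mathcal F_i^{n,\kappa}]$ of the modulus quantity $\eta$, where $(n_i^{n,\kappa})^2 = \E[\eta_{(3i+\kappa-1)du_n,\kappa du_n}\mid\mathcal F_i^{n,\kappa}]$. \textbf{The main obstacle} is to obtain the exponent $p\wedge 2$ here: for $p\le 2$ one applies conditional Jensen to pass from $\E[\eta^{p/2}\mid\mathcal F_i^{n,\kappa}]$ to $(\E[\eta\mid\mathcal F_i^{n,\kappa}])^{p/2} = (n_i^{n,\kappa})^{p}$, whereas for $p\ge 2$ one first invokes the uniform boundedness from (A1) to bound $\eta^{p/2}$ by a constant times $\eta$, which gives $(n_i^{n,\kappa})^{2}$. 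The terminal inequality $\le K_p$ then follows because $n_i^{n,\kappa}$ is bounded under (A1) and $u_n\to0$.
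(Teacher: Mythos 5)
Your proof is correct and follows essentially the same route as the paper: the paper's (very terse) proof likewise bounds all diffusion-related terms via the conditional moment estimates \eqref{eqn: estimates} applied with $z=(3i+\kappa-1)du_n$, $t=((3i+\kappa-1)d+\kappa(j-1))u_n$, $s=((3i+\kappa-1)d+\kappa j)u_n$ together with the uniform boundedness of $g_{n,\kappa}$, and handles $B(3,g)_{i,j}^{n,\kappa}$ through the moment bound $\E[\|\overline\ep(g)^{n,\kappa}_i\|^p]\le K_p k_n^{-p/2}$ (cited from \cite[Equation (16.2.3)]{JP12}, which you instead derive directly from the Gaussianity in (E)) combined with the window calibration \eqref{kn}. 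The extra details you supply -- the Burkholder--Gundy computations for $A$, $B(1)$, $B(2)$, $C$, and the Jensen/boundedness dichotomy producing the exponent $p\wedge 2$ in the bound for $D$ -- are precisely the steps the paper leaves implicit, so there is no substantive difference in approach.
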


\begin{proof}
To show the estimate for the term $B(3,g)_{i,j}^{n,\kappa}$, we refer to \cite[Equation (16.2.3)]{JP12} which implies that $\E[\|\overline \ep(g)^{n,\kappa}_{i}\|^p]\le K_p k_n^{-p/2}$, such that the claim follows by recalling \eqref{kn}.
For the remaining terms we use \eqref{eqn: estimates} with $z=(3i + \kappa - 1)du_n, t= ((3i + \kappa - 1)d+\kappa(j-1))u_n, s = ((3i + \kappa - 1)d+\kappa j)u_n$ plus the fact that $g_{n,\kappa}$ is uniformly bounded in $n$.
\end{proof}

\begin{lem}\label{lemma: convergence}
Assume (A1) and (E). Then $u_n \E\left[\sum_{i=0}^{[T/3du_n]-1} \eta_i^{n,\kappa}\right]\to0$.
\end{lem}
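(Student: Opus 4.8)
The plan is to establish the convergence pathwise and then lift it to convergence of the expectation by bounded convergence, the point being that under (A1) the summand is uniformly bounded. Indeed, under (A1) the processes $a,v',v''$ are bounded by a constant $K$ uniformly in $(\omega,t)$, so every increment satisfies $\|V_{t+u}-V_t\|\le 2K$ and hence $\eta_i^{n,\kappa}\le 4K^2$. Since there are $[T/3du_n]$ summands,
\[
0\;\le\;u_n\sum_{i=0}^{[T/3du_n]-1}\eta_i^{n,\kappa}\;\le\;u_n\Big[\tfrac{T}{3du_n}\Big]4K^2\;\le\;\frac{4K^2T}{3d},
\]
uniformly in $n$ and $\omega$. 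By bounded convergence it therefore suffices to prove that $u_n\sum_i\eta_i^{n,\kappa}\to0$ for $\PP$-almost every $\omega$.

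For the pathwise statement, fix $\omega$ such that $t\mapsto a_t,v'_t,v''_t$ are c\`adl\`ag, and set $J_i:=[(3i+\kappa-1)du_n,(3i+\kappa-1)du_n+\kappa du_n]$ together with $\mathrm{osc}(V;J):=\sup_{s,t\in J}\|V_s-V_t\|$. The intervals $J_i$, $i=0,\ldots,[T/3du_n]-1$, are pairwise disjoint subintervals of $[0,T]$ of length $\kappa du_n\le 2du_n$, and $\eta_i^{n,\kappa}\le\sum_{V\in\{a,v',v''\}}\mathrm{osc}(V;J_i)^2$, so it is enough to bound $u_n\sum_i\mathrm{osc}(V;J_i)^2$ for each of the three c\`adl\`ag processes separately. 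Given $\varepsilon>0$, the standard characterisation of c\`adl\`ag (regulated) functions (see e.g. \cite{JS02}) yields a finite partition $0=\tau_0<\cdots<\tau_M=T$ with $\mathrm{osc}(V;[\tau_{j-1},\tau_j))<\varepsilon$ for every $j$. Any $J_i$ contained in a single $[\tau_{j-1},\tau_j)$ then contributes at most $\varepsilon^2$; any other $J_i$ must contain one of the $M-1$ interior points $\tau_j$, and since the $J_i$ are disjoint there are at most $M-1$ such intervals, each contributing at most $(2K)^2$.

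Collecting the two types of intervals gives
\[
u_n\sum_{i=0}^{[T/3du_n]-1}\mathrm{osc}(V;J_i)^2\;\le\;\frac{T}{3d}\,\varepsilon^2+4K^2(M-1)\,u_n .
\]
Letting $n\to\infty$ with $\varepsilon$ (hence $M$) fixed, the second term vanishes, whence $\limsup_n u_n\sum_i\mathrm{osc}(V;J_i)^2\le\frac{T}{3d}\varepsilon^2$; as $\varepsilon>0$ is arbitrary this $\limsup$ is $0$. Summing over $V\in\{a,v',v''\}$ yields $u_n\sum_i\eta_i^{n,\kappa}\to0$ almost surely, and the bounded convergence theorem then gives $u_n\,\E[\sum_i\eta_i^{n,\kappa}]\to0$. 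The main difficulty is precisely that $a,v',v''$ are only assumed c\`adl\`ag, with no control on the size of their increments over short windows near a jump (so no quantitative bound of the form $\E[\eta_i^{n,\kappa}]\lesssim u_n$ is available); the resolution is to isolate the finitely many oscillations exceeding a fixed threshold $\varepsilon$ from the uniformly small ones, i.e. to exploit the regulated structure rather than a modulus of continuity.
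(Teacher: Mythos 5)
Your proof is correct and, in substance, is exactly the argument the paper invokes by citing Lemma 6.3 of \cite{JP13}: localization gives uniform boundedness, the pathwise Riemann-type sum of squared oscillations of the c\`adl\`ag paths $a,v',v''$ vanishes (your regulated-function partition argument, equivalently a.e.-continuity of the paths plus dominated convergence), and bounded convergence lifts the pathwise statement to expectations. The only, immaterial, slip is in the count of exceptional blocks: a $J_i$ not contained in a single $[\tau_{j-1},\tau_j)$ could also contain the endpoint $\tau_M=T$ rather than an interior $\tau_j$, so there are at most $M$ rather than $M-1$ such blocks — the resulting bound $4K^2 M u_n\to0$ is unaffected.
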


\begin{proof}
The proof follows along the lines of the proof of Lemma 6.3 in \cite{JP13}.
\end{proof}

\begin{lem}\label{lemma: law}
Let the assumptions (A1) and (E) be satisfied.
Fix a weight function $g$. Then, for any $r\in\{0,\ldots, d\}$, $i=0,\ldots, [T/3du_n]-1$ and $\kappa=1,2$ the $\mathcal F_i^{n,\kappa}$-conditional law of 
\[
\gamma_r(A(g)_i^{n,\kappa},B(g)_i^{n,\kappa})^2
\] 
coincides with the $\mathcal F_i^{n,\kappa}$-conditional law of 
\[
\overline F_r(\sigma_i^{n,\kappa}, \widetilde \sigma, v_i^{n,\kappa}, b_i^{n,\kappa}, \Sigma^n, g^n,\kappa),
\]
where 
\begin{equation}\label{Sigma^n}
\Sigma^n := \frac{\theta^3}{k_n^3 \Delta_n^2}\Sigma.
\end{equation}
\end{lem}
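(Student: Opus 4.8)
The plan is to prove the stronger, self-evidently sufficient statement that, conditionally on $\mathcal F_i^{n,\kappa}$, the whole $(\R^{2d})^d$-valued array of column pairs $\big(A(g)_{i,j}^{n,\kappa},\,B(g)_{i,j}^{n,\kappa}\big)_{j=1,\ldots,d}$ has the same law as $\big(\Psi(\underline u, g^n,\kappa)_j\big)_{j=1,\ldots,d}$ with $\underline u=(\sigma_i^{n,\kappa},\widetilde\sigma, v_i^{n,\kappa}, b_i^{n,\kappa},\Sigma^n)$. Indeed, writing $A=A(g)_i^{n,\kappa}$, $B=B(g)_i^{n,\kappa}$, the definition \eqref{F_r} gives $\gamma_r(A,B)^2=F_r\big((A_{i,1},B_{i,1}),\ldots,(A_{i,d},B_{i,d})\big)$ and $\overline F_r(\underline u,g^n,\kappa)=F_r(\Psi_1,\ldots,\Psi_d)$, so once the arrays of arguments have matching conditional laws, applying the fixed Borel map $F_r$ yields the claim.

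Set $z:=(3i+\kappa-1)du_n$, so $\mathcal F_i^{n,\kappa}=\mathcal F_z$ and the $j$th block occupies $[t_j,t_j+\kappa u_n]$ with $t_j=z+\kappa(j-1)u_n$. Conditioning on $\mathcal F_z$ freezes $\sigma_i^{n,\kappa}, v_i^{n,\kappa}, b_i^{n,\kappa}$ (and $\widetilde\sigma$ is deterministic), which I identify with $\alpha,\gamma,a,\beta$ in \eqref{Psi1}--\eqref{Psi2}. Recalling that the noise enters only through $\mathcal H_t=\mathcal F_t\vee\sigma(\varepsilon)$ and that $W'\perp\mathcal G\supseteq\mathcal F_z$, the post-$z$ increments of $W$, the process $W'$, and the noise $\varepsilon$ are, conditionally on $\mathcal F_z$, mutually independent and independent of $\mathcal F_z$. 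I then introduce the conditionally standard Brownian motions $\widetilde W_{\tilde s}:=u_n^{-1/2}(W_{z+u_n\tilde s}-W_z)$ and $\widetilde W'_{\tilde s}:=u_n^{-1/2}(W'_{z+u_n\tilde s}-W'_z)$.

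The heart of the argument is the deterministic time change $s\mapsto\tilde s=(s-z)/u_n$, under which $g_{n,\kappa}(s-t_j)=g^n(\tilde s/\kappa-(j-1))$, $dW_s=u_n^{1/2}\,d\widetilde W_{\tilde s}$, $W_s-W_z=u_n^{1/2}\widetilde W_{\tilde s}$ and $ds=u_n\,d\tilde s$. Substituting into $A(g)_{i,j}^{n,\kappa}$, $B(1,g)_{i,j}^{n,\kappa}$ and $B(2,g)_{i,j}^{n,\kappa}$, the explicit $u_n$-prefactors cancel exactly, and each term is carried onto the corresponding term of $\Psi(\underline u,g^n,\kappa)_j$ with $\widetilde W,\widetilde W'$ in the roles of $\overline W,\overline W'$: the $W$-integral in $A$ gives the $x$-component \eqref{Psi1}, the drift and $v$-parts of $B(1)$ give the first two lines of \eqref{Psi2}, and $B(2)$ gives the $\widetilde\sigma$-line. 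Since the blocks occupy adjacent disjoint intervals and $\widetilde W_{\tilde s}$ carries the accumulated value $\widetilde W_{\kappa(j-1)}$ into block $j$ exactly as $\overline W_{\kappa(j-1)}$ does on the limit side, both the within-block joint law of $(A_{i,j},\text{diffusive part of }B_{i,j})$ and the across-block dependence match those of the $\Psi_j$.

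It remains to handle $B(3,g)_{i,j}^{n,\kappa}=(\kappa u_n)^{-1}\overline\varepsilon(g)^{n,\kappa}_{t_j/\Delta_n}$. Using the summation-by-parts representation in \eqref{prereturn}, this is a linear combination of the $k_n$ independent vectors $\varepsilon_{(t_j/\Delta_n+\kappa j')\Delta_n}\sim\mathcal N_d(0,\Sigma)$, $j'=0,\ldots,k_n-1$, with weights $-(\kappa u_n)^{-1}\big(g(\tfrac{j'+1}{k_n})-g(\tfrac{j'}{k_n})\big)$; hence it is centred Gaussian with covariance $(\kappa u_n)^{-2}\big(\sum_{j'=0}^{k_n-1}(g(\tfrac{j'+1}{k_n})-g(\tfrac{j'}{k_n}))^2\big)\Sigma$. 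By the definition \eqref{psi_1(gn)} this sum equals $\psi_1(g^n)/k_n$, and since $u_n=k_n\Delta_n$ the covariance reduces to $\tfrac{\psi_1(g^n)}{\kappa^2 k_n^3\Delta_n^2}\Sigma$, which is exactly $\tfrac{1}{\kappa^2}\tfrac{\psi_1(g^n)}{\theta^3}\Sigma^n$ for $\Sigma^n=\tfrac{\theta^3}{k_n^3\Delta_n^2}\Sigma$, i.e. the law of the last line of \eqref{Psi2} with $\varphi=\Sigma^n$ and $\overline\Theta_{\kappa j}\sim\mathcal N_d(0,I_d)$. Distinct blocks use disjoint noise indices, giving independence across $j$ that matches that of the $\overline\Theta_{\kappa j}$, and $\varepsilon\perp(W,W')$ makes the noise contribution independent of the diffusive one, matching $\overline\Theta\perp(\overline W,\overline W')$. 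Collecting everything, the conditional joint law of the column pairs coincides with that of $(\Psi(\underline u,g^n,\kappa)_j)_j$, and the lemma follows.

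I expect the main care point to be precisely the noise term: checking that the discretised pre-averaged noise is exactly Gaussian with the stated covariance, and that the seemingly odd scaling $\Sigma^n=\theta^3(k_n^3\Delta_n^2)^{-1}\Sigma$ is forced by matching it to the $\overline\Theta$-line (this is where the discrete $\psi_1(g^n)$ of \eqref{psi_1(gn)}, not $\psi_1(g)$, is indispensable). The one genuinely delicate measurability issue is the single boundary value $\varepsilon_z$ feeding block $j=1$: the exact distributional identity relies on $\varepsilon$ being independent of $\mathcal F_i^{n,\kappa}$ (consistent with the noise being adjoined only through $\mathcal H_t=\mathcal F_t\vee\sigma(\varepsilon)$), for otherwise $B(3)_{i,1}^{n,\kappa}$ would carry a nonzero conditional mean and the match with the centred $\overline\Theta$-term would fail.
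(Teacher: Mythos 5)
Your proof is correct and is essentially the paper's own argument: the paper realizes $\overline W,\overline W'$ as exactly your rescaled post-window increments of $W,W'$, and defines $\overline\Theta$ implicitly through $B(3,g)_{i,j}^{n,\kappa}$ via the same Gaussian covariance computation
$\frac{1}{\kappa^2 u_n^2}\sum_{\mu=0}^{k_n-1}\bigl(g(\tfrac{\mu+1}{k_n})-g(\tfrac{\mu}{k_n})\bigr)^2\Sigma=\frac{1}{\kappa^2}\frac{\psi_1(g^n)}{\theta^3}\Sigma^n$
that you use to force \eqref{Sigma^n}, your write-up merely making explicit the term-by-term matching of $A$, $B(1)$, $B(2)$, the role of the discretized weight $g^n$, and the cross-block dependence. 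Your closing caveat about the boundary value $\varepsilon_z$ is well spotted and applies equally to the paper's proof (which silently computes the unconditional law of $B(3)$): under the literal adaptedness in assumption (E), $\varepsilon_z$ is $\mathcal F_i^{n,\kappa}$-measurable, so the exact identity of conditional laws needs the reading you describe; since the offending coefficient is $(\kappa u_n)^{-1}g(1/k_n)=O(\Delta_n^{1/3})$, the discrepancy is harmless for the subsequent asymptotics, and it is an imprecision of the paper rather than of your argument.
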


\begin{proof}
The quantity  $ \overline F_r(\sigma_i^{n,\kappa}, \widetilde \sigma, v_i^{n,\kappa}, b_i^{n,\kappa}, \Sigma^n, g^n,\kappa)$ can be realized on $(\Omega, \mathcal F, (\mathcal F_t)_{t\in [0,T]}, \mathbb P)$ by taking 
\[
\overline W_t = \frac{W_{((3id +\kappa-1)d+ t)u_n} - W_{(3id +\kappa-1)du_n}}{\sqrt{u_n}}, 
\quad \overline W'_t = \frac{W'_{((3id +\kappa-1)d+ t)u_n} - W'_{(3id +\kappa-1)du_n}}{\sqrt{u_n}}.
\]
Then define $\overline\Theta$ implicitly, such that 
\[
\frac{1}{ \kappa} \left( \frac{\psi_1(g)}{\theta^3}\right)^{1/2} \sum_{m=1}^d \left((\Sigma^n)^{1/2}\right)^{lm}\,\overline\Theta_{\kappa j}^m = B(3,g)_{i,j}^{n,\kappa}.
\]
Indeed, we know that $B(3,g)_{i,j}^{n,\kappa}$ is a centered Gaussian random variable with covariance matrix
\[
\frac{1}{\kappa^2 u_n^2} \sum_{\mu=0}^{k_n-1} \left(g\left(\frac{\mu+1}{k_n}\right) - g\left(\frac{\mu}{k_n}\right)\right)^2 \Sigma
= \frac{\psi_1(g^n)}{\kappa^2 k_n^3 \Delta_n^2} \Sigma
= \frac{1}{\kappa^2} \frac{\psi_1(g^n)}{\theta^3}\Sigma^n,
\]
where $\psi_1(g^n)$ is defined at \eqref{psi_1(gn)}. We also remark that due to \eqref{kn} we obtain that $\Sigma^n= (1 + o(\Delta_n^{1/6}))\Sigma$.
\end{proof}
As a direct consequence of Lemma \ref{lemma: law} we can deduce that
\begin{align}\label{conditional exp}
\E[\gamma_r(A(g)_i^{n,\kappa},B(g)_i^{n,\kappa})^2 \,|\, \mathcal F_i^{n,\kappa} ] &= \Gamma_r(\sigma_i^{n,\kappa}, \widetilde \sigma, v_i^{n,\kappa}, b_i^{n,\kappa}, \Sigma^n, g^n,\kappa), \\
\Var[\gamma_r(A(g)_i^{n,\kappa},B(g)_i^{n,\kappa})^2 \,|\, \mathcal F_i^{n,\kappa} ] &= \Gamma'_r(\sigma_i^{n,\kappa}, \widetilde \sigma, v_i^{n,\kappa}, b_i^{n,\kappa}, \Sigma^n, g^n,\kappa).\label{conditional var}
\end{align}

\subsection{Proof of Lemma \ref{lemma:Gamma} and Proposition \ref{prop1}} \label{sec5.3}

\begin{proof}[Proof of Lemma \ref{lemma:Gamma}]
The proof of Lemma \ref{lemma:Gamma} follows along the lines of the proof of \cite[Lemma 3.1]{JP13}. (Notice that we can incorporate the additional terms with $\overline \Theta_j$ appearing in \eqref{Psi2} in the terms $\Theta_j$ at \cite[Equation (6.8)]{JP13}.) 
\end{proof}

\begin{proof}[Proof of Proposition \ref{prop1}]
We start with the proof of part (i). Let $r\in\{0,\ldots, d\}$, $\underline u = (\alpha, \beta, \gamma, a, \varphi) \in \mathcal U$, $\kappa=1,2$ and $g$ be any weight function. Using the notation at \eqref{Psi1} and \eqref{Psi2}, we define the matrices
\[
A(\underline u, g, \kappa):= \left(\Psi(\underline u, g, \kappa)_i^j\right)_{i,j=1,\ldots, d}, \qquad
B(\underline u, g, \kappa):= \left(\Psi(\underline u, g, \kappa)_i^{d+j}\right)_{i,j=1,\ldots, d},
\]
being elements of $\mathcal M$.
Furthermore, for $\I\in\mathcal I_{(r,d-r)}$ we will use the notation 
\[
G^\I_{A(\underline u, g, \kappa), B(\underline u, g, \kappa)} = \left(\left(G^\I_{A(\underline u, g, \kappa), B(\underline u, g, \kappa)}\right)_i^j\right)_{i,j=1,\ldots, d}.
\]
Then, developing the determinant with the Leibniz rule, we obtain the identity
\begin{align}\label{determinant}
&\Gamma_r (\underline u, g,\kappa)
=\overline \E\left[ \gamma_r(A(\underline u, g, \kappa), B(\underline u, g, \kappa))^2\right] \\
&=\overline \E\left[ \sum_{\I, \I' \in \mathcal I_{(r,d-r)}} \det\left(G^\I_{A(\underline u, g, \kappa), B(\underline u, g, \kappa)}\right) \det\left(G^{\I'}_{A(\underline u, g, \kappa), B(\underline u, g, \kappa)}\right) \right] \nonumber\\
&=\overline \E\left[ \sum_{\I, \I' \in \mathcal I_{(r,d-r)}} 
\sum_{\pi, \pi'\in \mathfrak S_d} \sgn(\pi)\sgn(\pi')
\prod_{i=1}^d \left(G^{\I}_{A(\underline u, g, \kappa), B(\underline u, g, \kappa)}\right)_i^{\pi(i)}\prod_{j=1}^d \left(G^{\I'}_{A(\underline u, g, \kappa), B(\underline u, g, \kappa)}\right)_j^{\pi'(j)}  \right]\nonumber \\
&= \sum_{\I, \I' \in \mathcal I_{(r,d-r)}} 
\sum_{\pi, \pi'\in \mathfrak S_d}\sgn(\pi)\sgn(\pi')
\prod_{i=1}^d 
\overline \E \left[\left(G^{\I}_{A(\underline u, g, \kappa), B(\underline u, g, \kappa)}\right)_i^{\pi(i)} \left(G^{\I'}_{A(\underline u, g, \kappa), B(\underline u, g, \kappa)}\right)_i^{\pi'(i)}\right],\nonumber
\end{align}
where $\mathfrak S_d$ denotes the group of all permutations of the set $\{1,\ldots,d\}$ and $\sgn(\pi)\in\{-1,1\}$ is the sign of the permutation $\pi\in \mathfrak S_d$. The last step in the computation is due to the fact that the vectors $\left(G^{\I}_{A(\underline u, g, \kappa), B(\underline u, g, \kappa)}\right)_i$ and $\left(G^{\I'}_{A(\underline u, g, \kappa), B(\underline u, g, \kappa)}\right)_j$ are uncorrelated if $i\neq j$. Thus, for fixed $r\in\{0,\ldots, d\}$ and $\kappa=1,2$, the mapping $(\underline u,g)\mapsto\Gamma_r (\underline u, g,\kappa)$ can be considered as a polynomial in $\binom{d}{r}^2\times(d\,!)^2 \times d$
variables of the form
\begin{equation}
\label{eqn: variables}
\overline \E \left[\left(G^{\I}_{A(\underline u, g, \kappa), B(\underline u, g, \kappa)}\right)_i^{\pi(i)} \left(G^{\I'}_{A(\underline u, g, \kappa), B(\underline u, g, \kappa)}\right)_i^{\pi'(i)}\right],
\end{equation}
where $\I, \I'\in\mathcal I_{(r,d-r)}$, $\pi, \pi'\in \mathfrak S_d$ and $i=1,\ldots, d$.
Using It\^o's isometry, \eqref{eqn: variables} takes one of the following three forms with $l,l'\in\{1,\ldots,d\}$:
\begin{align*}
&\overline \E \left[  \Psi(\underline u, g, \kappa)_i^{l} \, \Psi(\underline u, g, \kappa)_i^{l'}\right]
= \psi_2(g) \sum_{m=1}^q \al^{lm}\al^{l'm},\\
&\overline \E \left[  \Psi(\underline u, g, \kappa)_i^{l} \, \Psi(\underline u, g, \kappa)_i^{d+l'}\right]
=0, \\
&\overline \E \left[  \Psi(\underline u, g, \kappa)_i^{d+l} \, \Psi(\underline u, g, \kappa)_i^{d+l'}\right]
= \psi_3(g)^2 a^l a^{l'} + \big(\psi_4(g) + (i-1)\psi_2(g)\big) \sum_{m,k=1}^q \gamma^{lkm} \gamma^{l'km} \\
&+ \psi_2(g)  \sum_{m=1}^d \beta^{lm}\beta^{l'm} + \frac{\psi_1(g)}{\kappa^2\theta^3}  \sum_{m=1}^d \left(\varphi^{1/2}\right)^{lm}\left(\varphi^{1/2}\right)^{l'm}.
\end{align*}

Hence, if we additionally fix $\underline u\in \mathcal U$, then there is a polynomial $ \tau_{r,\underline u,\kappa}\colon \R^4 \to \R$ such that the mapping $g\mapsto \Gamma_r(\underline u,g,\kappa)$ can be written as 
\[
g\mapsto \tau_{r,\underline u,\kappa}(\psi_1(g),\psi_2(g),\psi_3(g),\psi_4(g)).
\]
This shows the first part of \eqref{psidep}.
To show the second part, we use the relationship
\[
\Gamma'_{r}(\underline u,  g,\kappa)
	=\overline \E\left[ \gamma_r(A(\underline u, g, \kappa), B(\underline u, g, \kappa))^4\right]
	- \Gamma_{r}(\underline u, g,\kappa)^2.
\]
By a similar calculation as in \eqref{determinant} we obtain that 
\begin{multline*}
\overline \E\left[ \gamma_r(A(\underline u, g, \kappa), B(\underline u, g, \kappa))^4\right] \\
= \sum_{\I, \I', \I'', \I''' \in \mathcal I_{(r,d-r)}} 
\sum_{\pi, \pi',\pi'', \pi'''\in \mathfrak S_d}\sgn(\pi)\sgn(\pi')\sgn(\pi'')\sgn(\pi''')  
\prod_{i=1}^d 
\overline \E \left[\left(G^{\I}_{A(\underline u, g, \kappa), B(\underline u, g, \kappa)}\right)_i^{\pi(i)} \right. \\
\left.\times
\left(G^{\I'}_{A(\underline u, g, \kappa), B(\underline u, g, \kappa)}\right)_i^{\pi'(i)}
\left(G^{\I''}_{A(\underline u, g, \kappa), B(\underline u, g, \kappa)}\right)_i^{\pi''(i)}
\left(G^{\I'''}_{A(\underline u, g, \kappa), B(\underline u, g, \kappa)}\right)_i^{\pi'''(i)}
\right].
\end{multline*}
If we fix again $r\in\{0,\ldots, d\}$ and $\kappa=1,2$, the mapping $(\uu, g)\mapsto \overline \E\left[ \gamma_r(A(\underline u, g, \kappa), B(\underline u, g, \kappa))^4\right]$ can be considered as a polynomial in $\binom{d}{r}^4\times(d\,!)^4 \times d$
variables of the form
\begin{equation}
\label{eqn: variables 2}
\overline \E \left[\left(G^{\I}_{A(\underline u, g, \kappa), B(\underline u, g, \kappa)}\right)_i^{\pi(i)} 
\left(G^{\I'}_{A(\underline u, g, \kappa), B(\underline u, g, \kappa)}\right)_i^{\pi'(i)}
\left(G^{\I''}_{A(\underline u, g, \kappa), B(\underline u, g, \kappa)}\right)_i^{\pi''(i)}
\left(G^{\I'''}_{A(\underline u, g, \kappa), B(\underline u, g, \kappa)}\right)_i^{\pi'''(i)}
\right],
\end{equation}
where $\I, \I', \I'', \I'''\in\mathcal I_{(r,d-r)}$, $\pi, \pi', \pi'', \pi'''\in \mathfrak S_d$ and $i=1,\ldots, d$.
By a careful calculation, we can see that \eqref{eqn: variables 2} takes one of the following five forms with $l,l', l'', l'''\in\{1,\ldots,d\}$:
\begin{align*}
&\overline \E \left[  \Psi(\underline u, g, \kappa)_i^{l} \, \Psi(\underline u, g, \kappa)_i^{l'}
\, \Psi(\underline u, g, \kappa)_i^{l''}\, \Psi(\underline u, g, \kappa)_i^{l'''}\right]
= \psi_2(g)^2 K_\al,\\
&\overline \E \left[  \Psi(\underline u, g, \kappa)_i^{l} \, \Psi(\underline u, g, \kappa)_i^{l'}
\, \Psi(\underline u, g, \kappa)_i^{l''}\, \Psi(\underline u, g, \kappa)_i^{d+l'''}\right]
=0, \\
&\overline \E \left[  \Psi(\underline u, g, \kappa)_i^{l} \, \Psi(\underline u, g, \kappa)_i^{l'}
\, \Psi(\underline u, g, \kappa)_i^{d+l''}\, \Psi(\underline u, g, \kappa)_i^{d+l'''}\right]
=\psi_2(g) \sum_{m=1}^q \al^{lm}\al^{l'm} \\
&\quad\times
\left(\psi_3(g)^2 a^{l''} a^{l'''} + \big(\psi_4(g) + (i-1)\psi_2(g)\big) \sum_{m,k=1}^q \gamma^{l''km} \gamma^{l'''km}\right. \\
&\qquad\qquad\left.+ \psi_2(g)  \sum_{m=1}^d \beta^{l''m}\beta^{l'''m} + \frac{\psi_1(g)}{\kappa^2\theta^3}  \sum_{m=1}^d \left(\varphi^{1/2}\right)^{l''m}\left(\varphi^{1/2}\right)^{l'''m}
\right), \\
&\overline \E \left[  \Psi(\underline u, g, \kappa)_i^{l} \, \Psi(\underline u, g, \kappa)_i^{d+l'}
\, \Psi(\underline u, g, \kappa)_i^{d+l''}\, \Psi(\underline u, g, \kappa)_i^{d+l'''}\right]
=0,\\
&\overline \E \left[  \Psi(\underline u, g, \kappa)_i^{l} \, \Psi(\underline u, g, \kappa)_i^{l'}
\, \Psi(\underline u, g, \kappa)_i^{d+l''}\, \Psi(\underline u, g, \kappa)_i^{d+l'''}\right]
=\psi_3(g)^4 a^la^{l'}a^{l''}a^{l'''} \\
&+ \big(\psi_4(g) + (i-1)\psi_2(g)\big)^2 K_\gamma + \psi_2(g)^2 K_\beta +\big( \frac{\psi_1(g)}{\kappa^2\theta^3}\big)^2 K_\varphi
+ \psi_3(g)^2 \big(\psi_4(g) + (i-1)\psi_2(g)\big) K_{a,\gamma}   \\ 
&+\psi_3(g)^2 \psi_2(g) K_{a,\beta} + \psi_3(g)^2\frac{\psi_1(g)}{\kappa^2\theta^3} K_{a,\varphi} +\big(\psi_4(g) + (i-1)\psi_2(g)\big)\psi_2(g) K_{\gamma, \beta} \\
&+ \big(\psi_4(g) + (i-1)\psi_2(g)\big) \frac{\psi_1(g)}{\kappa^2\theta^3} K_{\gamma, \varphi} + \psi_2(g)\frac{\psi_1(g)}{\kappa^2\theta^3} K_{\beta, \varphi}.
\end{align*} 
We remark that the constants do not depend on $\kappa$. 
Consequently, if we additionally fix $\uu\in \mathcal U$, there is a polynomial $\tau'_{r,\uu,\kappa}\colon \R^4\to\R$ such that the mapping $g\mapsto \Gamma'_r(\uu, g, \kappa)$ can be written as 
\[
g\mapsto \tau'_{r,\underline u,\kappa}(\psi_1(g),\psi_2(g),\psi_3(g),\psi_4(g)),
\]
which proves part (i) of Proposition \ref{prop1}. By an inspection of the previous calculations, we see that the only term where $\kappa$ appears is the term $\frac{\psi_1(g)}{\kappa^2\theta^3}$. Hence, for any $r\in\{0,\ldots, d\}$, $\uu\in \mathcal U$, we have 
\begin{align*}
\tau_{r,\uu, 1}(x_1,x_2,x_3,x_4) &= \tau_{r,\uu,2}(4x_1,x_2,x_3,x_4), \\
\tau'_{r,\uu, 1}(x_1,x_2,x_3,x_4) &= \tau'_{r,\uu,2}(4x_1,x_2,x_3,x_4), \qquad (x_1,x_2,x_3,x_4)\in\R^4.
\end{align*}
This shows part (ii) of Proposition \ref{prop1}.
\end{proof}

\subsection{Proof of Theorem \ref{th2} and Proposition \ref{prop2}} \label{sec5.4}
Let $g$ be a weight function. We begin by constructing approximations for the main test statistics $S(g)_T^{n,\kappa}$ defined at \eqref{main-statistic} and $V(g,h)_T^{n,\kappa\kappa'}$ given at \eqref{V(g)}, \eqref{V(h)} and \eqref{V(g,h)}.
The approximations are discretized versions of $S(r,g)_T^\kappa$ (see \eqref{lln}) and the right-hand sides of \eqref{V(g,h)_T^{n,11}} to \eqref{V(g,h)_T^{n,12}}
\begin{align}\nonumber
S(r,g)_T^{n,\kappa} &:= 3du_n \sum_{i=0}^{[T/3du_n]} \gamma_r(A(g)_i^{n,\kappa},B(g)_i^{n,\kappa})^2, \\
 \label{eqn: V}
V(r,g,h)_T^{n,\kappa\kappa'} &:= 
\begin{cases}
9d^2 u_n \sum_{i=0}^{[T/3du_n]-1} \gamma_r(A(g)_i^{n,1},B(g)_i^{n,1})^4, & \text{if } \kappa = \kappa'=1, \\
9d^2 u_n \sum_{i=0}^{[T/3du_n]-1} \gamma_r(A(h)_i^{n,2},B(h)_i^{n,2})^4, & \text{if } \kappa = \kappa'=2, \\
9d^2 u_n \sum_{i=0}^{[T/3du_n]-1} \gamma_r(A(g)_i^{n,1},B(g)_i^{n,1})^2  \\
\quad \times \gamma_r(A(h)_i^{n,2},B(h)_i^{n,2})^2, & \text{if } \kappa =1,  \kappa'=2. 
\end{cases}
\end{align}
The lemma is based on the asymptotic expansion at \eqref{asyexp}.

\begin{lem}\label{lemma: 5.4}
Assume (A1), (E), let $r\in\{0,\ldots, d\}$, $\kappa, \kappa'=1,2$ and $g,h$ be two weight functions (not necessarily satisfying the conditions of Proposition \ref{prop1}(ii)). Then, on $\Omega_T^{\le r}$, we have that 
\begin{gather}\label{(6.18)}
\frac{1}{\sqrt{u_n}}\left( \frac{1}{(\kappa u_n)^{d-r}}S(g)_T^{n,\kappa} - S(r,g)_T^{n,\kappa} \right) \toop 0, \\ \label{(6.19)}
\frac{1}{(\kappa\kappa' u_n^2)^{d-r}}V(g,h)_T^{n,\kappa\kappa'} - V(r,g,h)_T^{n,\kappa\kappa'}  \toop 0.
\end{gather}
\end{lem}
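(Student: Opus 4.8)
The plan is to substitute the stochastic decomposition \eqref{stoch decomp} into the determinant and expand it via the inequalities established in the first lemma of this section, treating \eqref{(6.18)} and \eqref{(6.19)} separately. Throughout I write $\lambda=\sqrt{\kappa u_n}$, $A_i=A(g)_i^{n,\kappa}$, $B_i=B(g)_i^{n,\kappa}$, $C_i=C(g)_i^{n,\kappa}$, $D_i=D(g)_i^{n,\kappa}$ and $\Lambda_i=\|A_i\|+\|B_i\|+\|C_i\|+\|D_i\|$. On $\Omega_T^{\le r}$ we have $\rank(\sigma_s)=\rank(c_s)\le R_T\le r$ for every $s\in[0,T)$, hence $\rank(A_i)\le\rank(\sigma_i^{n,\kappa})\le r$, so the hypotheses of the expansion lemma are satisfied with this $\lambda$.

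For \eqref{(6.18)}, by \eqref{main-statistic} and \eqref{f} the $i$-th summand of $(\kappa u_n)^{r-d}S(g)_T^{n,\kappa}$ equals $\lambda^{-(2d-2r)}\det(A_i+\lambda B_i+\lambda^2 C_i+\lambda^2 D_i)^2$. Subtracting $\gamma_r(A_i,B_i)^2$ and invoking \eqref{eqn: inequality} gives the pointwise identity
\begin{equation*}
\lambda^{-(2d-2r)}\det(A_i+\lambda B_i+\lambda^2 C_i+\lambda^2 D_i)^2-\gamma_r(A_i,B_i)^2=2\lambda\,\gamma_r(A_i,B_i)\big(\gamma_{r-1}(A_i,B_i)+\gamma_r'(A_i,B_i,C_i)\big)+\mathrm{Rem}_i,
\end{equation*}
with $|\mathrm{Rem}_i|\le K\lambda\Lambda_i^{2d-1}(\lambda\Lambda_i+\|D_i\|)$. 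Multiplying by $3du_n/\sqrt{u_n}$ and summing, the remainder is bounded by $Ku_n\sum_i\Lambda_i^{2d-1}(\sqrt{u_n}\Lambda_i+\|D_i\|)$; using the conditional moment bounds of Lemma \ref{lemma: boundedness}, the estimate $\E[\|D_i\|^2\mid\mathcal F_i^{n,\kappa}]\le K(u_n+(\eta_i^{n,\kappa})^2)$ with H\"older's inequality, and finally Lemma \ref{lemma: convergence}, this contribution tends to $0$ in $L^1$, hence in probability.

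The decisive point is the first-order term $M_n:=2\sqrt\kappa\,(3du_n)\sum_i W_i$ with $W_i:=\gamma_r(A_i,B_i)\big(\gamma_{r-1}(A_i,B_i)+\gamma_r'(A_i,B_i,C_i)\big)$, which a priori is only $O_{\mathbb P}(1)$. To kill it I would use the symmetry that flips the block increments of $W$ while leaving those of $W'$ and $\varepsilon$ fixed. Inspecting the definitions in Subsection \ref{sec5.2}, this flip leaves $B_i$ invariant (the $b$-term carries no $W$, the quadratic $v$-term and $B(3,g)_{i}^{n,\kappa}$ are even, and $B(2,g)_{i}^{n,\kappa}$ is untouched), whereas $A_i\mapsto -A_i$ and $C_i\mapsto -C_i$. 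Since $\gamma_r$ is multilinear with $r$ columns from its first argument and $\gamma_r'$ with $r$ from the first and one from the third, the products satisfy $\gamma_r\gamma_{r-1}\mapsto(-1)^{2r-1}\gamma_r\gamma_{r-1}$ and $\gamma_r\gamma_r'\mapsto(-1)^{2r+1}\gamma_r\gamma_r'$, so $W_i$ is odd. As $W$ is independent of $W'$ and $\varepsilon$ and its $\mathcal F_i^{n,\kappa}$-conditional increment law is symmetric, this forces $\E[W_i\mid\mathcal F_i^{n,\kappa}]=0$. Because the blocks are non-overlapping, $(W_i)$ is a martingale difference array for $(\mathcal F_i^{n,\kappa})$, and Lemma \ref{lemma: boundedness} gives $\E[W_i^2]\le K$; orthogonality then yields $\E[M_n^2]=4\kappa(3du_n)^2\sum_i\E[W_i^2]=O(u_n)\to0$, so $M_n\toop0$ and \eqref{(6.18)} follows.

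The second assertion \eqref{(6.19)} is easier, as there is no factor $u_n^{-1/2}$ and the crude estimate \eqref{eqn: (6.6)} suffices, so no parity argument is needed. In the mixed case $\kappa=1,\kappa'=2$ one applies \eqref{eqn: (6.6)} with $\lambda=\sqrt{u_n}$, $\lambda'=\sqrt{2u_n}$ and the matrices built from $g$ at frequency $1$ and from $h$ at frequency $2$, so that $(\lambda\lambda')^{2d-2r}=(2u_n^2)^{d-r}$ matches the normalisation; the two diagonal cases follow from \eqref{eqn: (6.6)} with $A'=A_i$, $B'=B_i$, $C'=C_i$, $D'=D_i$ and $\lambda'=\lambda$. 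In each case the per-block discrepancy is at most $K(\lambda+\lambda')\Lambda_i\Lambda_i'$, whence the total is bounded by $K(\lambda+\lambda')\big(u_n\sum_i\Lambda_i\Lambda_i'\big)$; the bracket is $O_{\mathbb P}(1)$ by Lemma \ref{lemma: boundedness} and Cauchy-Schwarz, so the whole expression is $O_{\mathbb P}(\sqrt{u_n})\to0$. The main obstacle is plainly the treatment of $M_n$ in \eqref{(6.18)}: showing that the genuinely first-order correction has vanishing $\mathcal F_i^{n,\kappa}$-conditional expectation through the parity argument above is what preserves the rate, while everything else reduces to the moment estimates already at hand.
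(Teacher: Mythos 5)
Your proof is correct and takes essentially the same approach as the paper: it rests on the expansion inequalities \eqref{eqn: inequality} and \eqref{eqn: (6.6)}, on the moment bounds of Lemma \ref{lemma: boundedness} together with Lemma \ref{lemma: convergence} for the remainder, and on the key parity argument ($A$ and $C$ odd, $B$ even under flipping the $W$-increments while keeping $W'$ and $\varepsilon$ fixed) combined with $L^2$-orthogonality of martingale differences to eliminate the first-order term, exactly as in the paper's proof. The only (immaterial) difference is organizational: the paper first splits the first-order summands into their $\mathcal F_i^{n,\kappa}$-conditional expectations and the centered remainders, handles the centered part by the Doob-type $L^2$ computation and then shows the conditional expectations vanish by parity (conditioning on $\mathcal H_i^{n,\kappa}\vee\sigma(W')$ and towering down), whereas you establish the vanishing conditional expectation first and then apply the orthogonality computation to the whole sum.
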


\begin{proof}
The proof is an adaption of the proof of \cite[Lemma 6.4]{JP13}. Let $\xi(g)_i^{n,\kappa}$ denote the $i$th summand on the right-hand side of \eqref{main-statistic}. We start by showing \eqref{(6.18)}. To this end, we use the fact that $\rank(A(g)_i^{n,\kappa})\le r$ for all $i$ to apply the inequality at \eqref{eqn: inequality} with $\lambda = \sqrt{\kappa u_n}$ to obtain
\begin{align*}
\frac{1}{(\kappa u_n)^{d-r}} \xi(g)_i^{n,\kappa} = \gamma_r(A(g)_i^{n,\kappa},B(g)_i^{n,\kappa})^2 + 2\sqrt{\kappa u_n} \zeta(g)_i^{n,\kappa} + \widetilde \zeta(g)_i^{n,\kappa},
\end{align*}
where with the Cauchy-Schwarz inequality (using the conventions after \eqref{gamma'})
\begin{gather*}
\zeta(g)_i^{n,\kappa}:= \gamma_r(A(g)_i^{n,\kappa}, B(g)_i^{n,\kappa})(\gamma_{r-1}(A(g)_i^{n,\kappa}, B(g)_i^{n,\kappa}) + \gamma'_r(A(g)_i^{n,\kappa}, B(g)_i^{n,\kappa}, C(g)_i^{n,\kappa})), \\
\E\left[|\widetilde \zeta(g)_i^{n,\kappa}|\right]\le Ku_n + K\sqrt{u_n}\,\E[\eta_i^{n,\kappa}].
\end{gather*}
Applying Lemma \ref{lemma: convergence}, we deduce that $\sqrt{u_n} \sum_{i=0}^{[T/3du_n]-1} \widetilde \zeta(g)_i^{n,\kappa} \toop 0$. Regarding the structure of $S(r,g)_T^{n,\kappa}$ we need to prove that $u_n \sum_{i=0}^{[T/3du_n]-1} \zeta(g)_i^{n,\kappa} \toop 0$. To this end, we consider the decomposition $\zeta(g)_i^{n,\kappa} = \zeta'(g)_i^{n,\kappa} + \zeta''(g)_i^{n,\kappa}$, where $\zeta''(g)_i^{n,\kappa} = \E[\zeta(g)_i^{n,\kappa}\,|\,\mathcal F_i^{n,\kappa}]$. We obtain
\begin{align}\label{Doob}
\begin{split}
\E\left[\left(u_n \sum_{i=0}^{[T/3du_n]-1} \zeta''(g)_i^{n,\kappa} \right)^2\right]
&= u_n^2 \sum_{i,j=0}^{[T/3du_n]-1} \E[ \zeta''(g)_i^{n,\kappa}\zeta''(g)_j^{n,\kappa}] \\
&= u_n^2 \sum_{i=0}^{[T/3du_n]-1} \E[ |\zeta''(g)_i^{n,\kappa}|^2] \\ 
&\le u_n KT\to0,
\end{split}
\end{align}
where the second identity follows from the fact that $\zeta''(g)_i^{n,\kappa}$ is $\mathcal F_{i+1}^{n,\kappa}$-measurable and the last estimate is a consequence of Lemma \ref{lemma: boundedness} and the fact that $\gamma_r$ and $\gamma'_r$ are continuous functions. Hence, we know that $u_n \sum_{i=0}^{[T/3du_n]-1} \zeta''(g)_i^{n,\kappa} \toop 0$. So it is sufficient to show that $\zeta'(g)_i^{n,\kappa}=0$, or the even stronger result that 
\begin{equation}\label{=0}
\E[\zeta(g)_i^{n,\kappa}\,|\,\mathcal H_i^{n,\kappa}\vee \sigma(W')]=0,
\end{equation} 
where $\sigma(W')$ is the $\sigma$-field generated by the whole process $W'$ and $\mathcal H_i^{n,\kappa}$ was introduced before and in \eqref{eqn: filtrations}. Recalling the definitions at \eqref{gamma2} and \eqref{gamma'}, equation \eqref{=0} follows by the implication
\begin{gather}
\I\in \mathcal I_{(r,d-r)}, \ \I'\in \mathcal I_{(r-1,d-r+1)}, \ \I'' \in \mathcal I_{(r,d-r-1,1)}
\ \Longrightarrow \notag\\
\label{eqn: 5.20 b}
\E\left[ \det \left( G^{\I}_{A(g)_i^{n,\kappa}, B(g)_i^{n,\kappa}} \right) 
\det \left( G^{\I'}_{A(g)_i^{n,\kappa}, B(g)_i^{n,\kappa}} \right)
\,|\,\mathcal H_i^{n,\kappa} \vee \sigma(W') \right] = 0, \\
\E\left[ \det \left( G^{\I}_{A(g)_i^{n,\kappa}, B(g)_i^{n,\kappa}} \right) 
\det \left( G^{\I''}_{A(g)_i^{n,\kappa}, B(g)_i^{n,\kappa}, C(g)_i^{n,\kappa}} \right)
\,|\,\mathcal H_i^{n,\kappa} \vee \sigma(W') \right] = 0.\label{eqn: 5.20 c}
\end{gather}
Note that due to the conventions after \eqref{gamma'} the left-hand side of \eqref{eqn: 5.20 b} is 0 if $r=0$, and the left-hand side of \eqref{eqn: 5.20 c} is 0 if $r=d$. The $d$-dimensional variables $A(g)_{i,j}^{n,\kappa}$, $B(g)_{i,j}^{n,\kappa}$ and $C(g)_{i,j}^{n,\kappa}$ can be written in the form 
\begin{equation*}
\Phi \left(\omega, (W(\omega)_{(3i + \kappa-1)du_n + t} - W(\omega)_{(3i + \kappa-1)du_n})_{t\ge 0} \right),
\end{equation*}
where $\Phi$ is a $(\mathcal H_i^{n,\kappa} \vee  \sigma(W')) \otimes \mathcal C^q$-measurable function on $\Omega \times C(\R_+, \R^q)$. Here $C(\R_+, \R^q)$ is the set of all continuous functions on $\R_+$ with values in $\R^q$ and $\mathcal C^q$ is its Borel $\sigma$-field for the local uniform topology. Notice that for $ \Phi = A(g)_{i,j}^{n,\kappa}$ or $\Phi = C(g)_{i,j}^{n,\kappa}$, the mapping $x \mapsto \Phi(\omega, x)$ is odd, meaning that $\Phi(\omega, -x) = -\Phi(\omega, x)$, and for $\Phi = B(g)_{i,j}^{n,\kappa}$, it is even, meaning that  $\Phi(\omega, -x) = \Phi(\omega, x)$.
We set
\begin{align*}
\Psi &= \det \left( G^{\I}_{A(g)_i^{n,\kappa}, B(g)_i^{n,\kappa}} \right), \\
\Psi' &= \det \left( G^{\I'}_{A(g)_i^{n,\kappa}, B(g)_i^{n,\kappa}} \right), \\
\Psi'' &= \det \left( G^{\I''}_{A(g)_i^{n,\kappa}, B(g)_i^{n,\kappa}, C(g)_i^{n,\kappa}} \right),
\end{align*} 
where $\Psi, \Psi', \Psi''$ are functions similar to $\Phi$. Due to the multilinearity of the determinant
we can deduce that if $r$ is even, then $\Psi$ is even and $\Psi'$, $\Psi''$ are odd. If $r$ is odd, $\Psi$ is odd and $\Psi'$,$\Psi''$ are even. Thus, in all cases, the products $\Psi\Psi'$ and $\Psi\Psi''$ are odd. Now, the $(\mathcal H_i^{n,\kappa} \vee \sigma(W'))$-conditional law of 
$\left( W_{(3i + \kappa-1)du_n + t} - W_{(3i + \kappa-1)du_n}\right)_{t\ge 0}$ is invariant under the map $x \mapsto -x$ on $C(\R_+, \R^q)$, which implies \eqref{eqn: 5.20 b}, and hence \eqref{=0}.

The proof of \eqref{(6.19)} is more direct. We apply the estimate at \eqref{eqn: (6.6)} with $\lambda = \sqrt{\kappa u_n}, \lambda' = \sqrt{\kappa' u_n}$. With the previous notation and Lemma \ref{lemma: boundedness} we obtain
\begin{align*}
\begin{split}
&\E\left[\left|\frac{1}{(\kappa \kappa'u_n^2)^{d-r}}V(g,h)_T^{n,\kappa\kappa'} - V(r,g,h)_T^{n,\kappa\kappa'} \right|\right] \\
&\le 9d^2 u_n \sum_{i=0}^{[T/3du_n]-1} \E\left[\left| \frac{1}{(\kappa \kappa'u_n^2)^{d-r}} \xi(g)_i^{n,\kappa}\xi(h)_i^{n,\kappa'}  - \gamma'_r(A(g)_i^{n,\kappa}, B(g)_i^{n,\kappa})^2 \gamma'_r(A(h)_i^{n,\kappa'}, B(h)_i^{n,\kappa'})^2 \right| \right] \\
&\le 9d^2 KT \sqrt{u_n} \to0,
\end{split}
\end{align*}
which implies \eqref{(6.19)}.
\end{proof}

With respect to Lemma \ref{lemma: 5.4}, Theorem \ref{th2} follows by showing the following lemma.

\begin{lem}\label{lemma: Jacob b}
Assume (A1), (E). Let $r\in\{0,\ldots, d\}$ and $g,h$ be two weight function satisfying the conditions of Proposition \ref{prop1}(ii). Then, on $\Omega_T^{\le r}$, we have the stable convergence
\begin{equation*}
U'(r,g,h)_T^{n} \stab \mathcal{MN}(0,V(r,g,h)_T),
\end{equation*}
where $V(r,g,h)_T$ is defined at \eqref{V} and the two-dimensional statistic $U'(r,g,h)_T^{n} = (U'(r,g,h)_T^{n,1}, U'(r,g,h)_T^{n,2})$ is given via
\begin{equation*}
U'(r,g,h)_T^{n} := \frac{1}{\sqrt{u_n}} \left(  S(r,g)_T^{n,1} - S(r,g)_T^{1}, 
S(r,h)_T^{n,2} - S(r,h)_T^{2}  \right). 
\end{equation*}
\end{lem}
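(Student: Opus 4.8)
The plan is to recognise $U'(r,g,h)_T^n$ as the sum of a negligible predictable bias and an $\R^2$-valued martingale difference array, and then to invoke the stable central limit theorem for triangular arrays \cite[Theorem IX.7.28]{JS02}. Writing $\zeta_r(g)_i^{n,\kappa}=\gamma_r(A(g)_i^{n,\kappa},B(g)_i^{n,\kappa})^2$, I would use the exact conditional moment identities \eqref{conditional exp}--\eqref{conditional var} to split
\[
S(r,g)_T^{n,\kappa}-S(r,g)_T^\kappa = 3du_n\sum_i\big(\zeta_r(g)_i^{n,\kappa}-\E[\zeta_r(g)_i^{n,\kappa}\mid\mathcal F_i^{n,\kappa}]\big) + \Big(3du_n\sum_i\Gamma_r(\sigma_i^{n,\kappa},\dots,\Sigma^n,g^n,\kappa)-S(r,g)_T^\kappa\Big),
\]
and likewise for $(h,2)$. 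The first bracket, divided by $\sqrt{u_n}$, defines martingale differences $\chi(g)_i^{n,1}$ and $\chi(h)_i^{n,2}$, which I would collect into the vector $\zeta_i^n=(\chi(g)_i^{n,1},\chi(h)_i^{n,2})$. Since the $\kappa=1$ block lives on $[3idu_n,(3i+1)du_n]$ and the $\kappa=2$ block on $[(3i+1)du_n,(3i+2)du_n]$, the nesting $\mathcal F_i^{n,1}\subset\mathcal F_i^{n,2}\subset\mathcal F_{i+1}^{n,1}$ shows that $\zeta_i^n$ is $\mathcal F_{i+1}^{n,1}$-measurable and $\mathcal F_i^{n,1}$-conditionally centered, hence a genuine martingale difference array.

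First I would dispose of the bias (the second bracket). Replacing $(\Sigma^n,g^n)$ by $(\Sigma,g)$ costs only $O(\Delta_n^{1/6})+O(k_n^{-1})=o(\sqrt{u_n})$ per summand, by Lemma \ref{lemma: law}, Remark \ref{rem4} and the smoothness of $\tau_{r,\underline u,\kappa}$ from Proposition \ref{prop1}(i); summed over the $O(u_n^{-1})$ blocks and divided by $\sqrt{u_n}$ this is $o_{\mathbb P}(1)$. The remaining Riemann-sum error $3du_n\sum_i\Gamma_r(\sigma_{\tau_i},\dots)-\int_0^T\Gamma_r(\sigma_s,\dots)\,ds$ I would control by viewing $s\mapsto\Gamma_r(\sigma_s,v_s,b_s,\dots)$ as a smooth (polynomial) function of the semimartingales $\sigma,v,b$, applying It\^o's formula on each block and using stochastic Fubini: the drift contribution is $O(u_n)$ after summation and the martingale contribution has variance $O(u_n^2)$, so after division by $\sqrt{u_n}$ both vanish. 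The single boundary summand at $i=[T/3du_n]$ is $O(\sqrt{u_n})$ and also disappears.

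For the martingale part I would verify the three conditions of the stable CLT. The conditional covariance follows from \eqref{conditional var}: the diagonal entries equal $9d^2u_n\sum_i\Gamma'_r(\sigma_i^{n,\kappa},\dots,\Sigma^n,g^n,\kappa)$, which converges in probability to $3d\int_0^T\Gamma'_r(\sigma_s,\dots,\kappa)\,ds$ by the same Riemann-approximation and continuity arguments, reproducing $V(r,g,h)_T$; the off-diagonal entry vanishes identically, because conditioning on $\mathcal F_i^{n,2}$ makes $\chi(g)_i^{n,1}$ measurable while $\chi(h)_i^{n,2}$ remains centered, so $\E[\chi(g)_i^{n,1}\chi(h)_i^{n,2}\mid\mathcal F_i^{n,1}]=0$ (the two components rest on disjoint increments of $W$, $W'$ and $\varepsilon$). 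The Lindeberg requirement follows from a Lyapunov bound: each $\E[|\chi|^4\mid\mathcal F_i^{n,1}]=O(u_n^2)$ by Lemma \ref{lemma: boundedness} (as $\gamma_r$ is a polynomial in conditionally $L^p$-bounded entries), so the sum is $O(u_n)\to0$.

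The delicate condition, and the main obstacle, is the asymptotic orthogonality that pins the limit down as $\mathcal G$-conditionally Gaussian and independent of $\mathcal G$. Orthogonality to the coordinates of $W$ is handled exactly as in the proof of Lemma \ref{lemma: 5.4}: since $A(g)_i^{n,\kappa}$ is odd and $B(g)_i^{n,\kappa}$ even under the sign flip of the local $W$-path, $\gamma_r^2$ is even in $W$, and its product with the odd coarse $W$-increment has vanishing conditional expectation. No orthogonality to $W'$ is needed, because $W'\perp\mathcal G$ and its block increments legitimately feed the mixing variance through the $\beta$-terms (i.e. $\widetilde\sigma$) in $\Gamma'_r$. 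The genuinely new point is orthogonality to the noise martingale $N_t=\sum_{t_l\le t}\varepsilon_{t_l}$, where parity fails since $B(3,g)_i^{n,\kappa}$ is linear, not even, in $\varepsilon$. Here I would instead use Stein's identity for the Gaussian noise to reduce $\E[\chi(g)_i^{n,1}\,\Delta_iN\mid\mathcal F_i^{n,1}]$ to a sum, over grid points $t_l$ in a sub-block, of the derivative factor $\E[\partial_{B(3)}\gamma_r^2\mid\mathcal F_i^{n,1}]$ weighted by the pre-averaging weights $-(g(\tfrac{\mu+1}{k_n})-g(\tfrac{\mu}{k_n}))$ from \eqref{prereturn}; this factor depends only on the sub-block and not on the position of $t_l$ within it, so the total is proportional to $\sum_\mu(g(\tfrac{\mu+1}{k_n})-g(\tfrac{\mu}{k_n}))=g(1)-g(0)=0$. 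Thus the defining boundary conditions $g(0)=g(1)=0$ of a weight function force an exact telescoping cancellation that kills the noise correlation, which is precisely where the structure of pre-averaging rescues the argument in the noisy regime. Combining the verified conditions gives $(\sum_i\chi(g)_i^{n,1},\sum_i\chi(h)_i^{n,2})\stab\mathcal{MN}(0,V(r,g,h)_T)$, and since $U'(r,g,h)_T^n$ equals this vector up to $o_{\mathbb P}(1)$ by the bias estimate, the claim follows.
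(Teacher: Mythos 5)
Most of your proposal retraces the paper's own route: the split into a martingale difference array plus a bias term, the removal of the $(\Sigma^n,g^n)\to(\Sigma,g)$ discrepancy via the smoothness of $\tau_{r,\underline u,\kappa}$, the Riemann/It\^o--Fubini treatment of the centering, the nesting argument $\mathcal F_i^{n,1}\subset\mathcal F_i^{n,2}$ for the vanishing off-diagonal covariance, the Lyapunov bound, and the parity (odd/even) argument for orthogonality to $W$ are all exactly the steps in the paper's Lemmas \ref{lemma: 5.4}--\ref{lemma: Jacod}. (One small slip there: $O(\Delta_n^{1/6})$ is \emph{not} $o(\sqrt{u_n})$, since $\sqrt{u_n}\asymp\Delta_n^{1/6}$; you need the $o(\Delta_n^{1/6})$ rate that \eqref{kn} and \eqref{Sigma^n} actually provide.)

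The genuine gap is in your verification of the last condition of \cite[Theorem IX.7.28]{JS02}, i.e.\ the analogue of \eqref{eqn: Jacod 5 b}. That condition must hold for \emph{all} one-dimensional bounded martingales orthogonal to $(W,W')$, or at least for a class that is total in the sense of \cite[Lemma 5.7]{JLMPV09}; this is what forces the limit to be mixed normal conditionally on $\mathcal G$, which contains the whole noise $\sigma$-field $\sigma(\ep)$. You check only the single martingale $N_t=\sum_{t_l\le t}\ep_{t_l}$, which is unbounded (so not even admissible) and, more importantly, generates far too small a class: orthogonality to \emph{linear} functionals of a Gaussian noise does not give conditional independence of $\sigma(\ep)$. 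The paper instead reduces to the class $\mathcal N^0\cup\mathcal N^1$, where $\mathcal N^1$ consists of martingales $N_t=\E[\tilde f(\ep_{t_1},\ldots,\ep_{t_m})\,|\,\mathcal F^{(1)}_t]$ with $\tilde f$ bounded Borel and \emph{nonlinear}; for these your Stein-identity/telescoping cancellation is unavailable (Gaussian integration by parts needs linearity or smoothness of the factor you differentiate away), and in fact exact per-block orthogonality is false in general. What saves the argument in the paper is not a cancellation at all but a counting bound: only the at most $m$ blocks meeting $\{t_1,\ldots,t_m\}$ contribute, each by $O(\sqrt{u_n})$ via Lemma \ref{lemma: boundedness}, so the sum is $O(m\sqrt{u_n})\to0$. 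Your proposal also silently omits the class $\mathcal N^0$ of bounded martingales of the filtration generated by the coefficient processes and $W'$ that are orthogonal to $(W,W')$ --- assumption (A) does not force $a,a',a'',v',v''$ to be driven by $W$, so this class is nontrivial; the paper handles it by writing $\E[\xi_i^{n,\kappa}\,|\,\mathcal H_i^{n,\kappa}\vee\cdot\,]$ as a stochastic integral against $(W,W')$ via the representation theorem, whence the product with an increment of $N$ has zero expectation. Parity in $W$ cannot substitute for either of these two arguments, so as written your proof of the stable (rather than merely weak) convergence, and of the $\mathcal G$-conditional Gaussianity of the limit, does not go through.
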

We will do the proof of Lemma \ref{lemma: Jacob b} in three steps: 

(i) Recall that due to Proposition \ref{prop1}(ii) we have that $S(r,g)_T^1= S(r,h)_T^2$. By a Riemann approximation argument, one can show that 
\begin{gather}\label{eqn: riemann}
\frac{1}{\sqrt{u_n}} \left( 3du_n \sum_{i=0}^{[T/3du_n]-1} \Gamma_r(\sigma_i^{n,1}, \widetilde \sigma, v_i^{n,1}, b_i^{n,1}, \Sigma, g,1) - \int_0^T \Gamma_r(\sigma_s, \widetilde \sigma, v_s, b_s, \Sigma, g,1)\,ds\right) \toop0, \\
\frac{1}{\sqrt{u_n}} \left( 3du_n \sum_{i=0}^{[T/3du_n]-1} \Gamma_r(\sigma_i^{n,2}, \widetilde \sigma, v_i^{n,2}, b_i^{n,2}, \Sigma, h,2) - \int_0^T \Gamma_r(\sigma_s, \widetilde \sigma, v_s, b_s, \Sigma, h,2)\,ds\right) \toop0. \nonumber
\end{gather}
More precisely, we use the fact that for a fixed weight function $g$ and $\kappa=1,2$, the map $\mathcal{U} \ni \underline u \mapsto \Gamma_r(\uu,g,\kappa)$ is a polynomial (and hence $C^\infty$) as well as the fact that thanks to assumption (A) the processes $\sigma$, $v$ and $b$ are It\^o semimartingales and hence c\`adl\`ag (see section 8 in \cite{BGJPS06} for more details).

(ii) We identify the limit by proving that
\begin{gather} \label{eqn: identification1}
3d \sqrt{u_n}  \sum_{i=0}^{[T/3du_n]-1} \left(\Gamma_r(\sigma_i^{n,1}, \widetilde \sigma, v_i^{n,1}, b_i^{n,1}, \Sigma^n, g^n,1) - \Gamma_r(\sigma_i^{n,1}, \widetilde \sigma, v_i^{n,1}, b_i^{n,1}, \Sigma, g,1) \right) \toop 0, \\
3d \sqrt{u_n}  \sum_{i=0}^{[T/3du_n]-1} \left(\Gamma_r(\sigma_i^{n,2}, \widetilde \sigma, v_i^{n,2}, b_i^{n,2}, \Sigma^n, h^n,2) - \Gamma_r(\sigma_i^{n,2}, \widetilde \sigma, v_i^{n,2}, b_i^{n,2}, \Sigma, h,2) \right) \toop 0.\label{eqn: identification2}
\end{gather}

(iii) We prove the stable convergence 
\begin{equation}\label{eqn: stable}
U''(r,g,h)_T^{n} \stab \mathcal{MN}(0,V(r,g,h)_T),
\end{equation}
for the two-dimensional statistic $U''(r,g,h)_T^{n} = (U''(r,g,h)_T^{n,1}, U''(r,g,h)_T^{n,2})$ with components 
\begin{align*}
U''(r,g,h)_T^{n,1}&=3d\sqrt{u_n}  \sum_{i=0}^{[T/3du_n]-1} \left( \gamma_r(A(g)_i^{n,1},B(g)_i^{n,1})^2 - \Gamma_r(\sigma_i^{n,1}, \widetilde \sigma, v_i^{n,1}, b_i^{n,1}, \Sigma^n, g^n,1)\right), \\
 U''(r,g,h)_T^{n,2}&=3d\sqrt{u_n}  \sum_{i=0}^{[T/3du_n]-1} \left( \gamma_r(A(h)_i^{n,2},B(h)_i^{n,2})^2 - \Gamma_r(\sigma_i^{n,2}, \widetilde \sigma, v_i^{n,2}, b_i^{n,2}, \Sigma^n, h^n,2)\right).
\end{align*}
The following lemma is concerned with the convergence at \eqref{eqn: identification1} and \eqref{eqn: identification2}, respectively. 

\begin{lem}\label{lemma: identification}
Assume (A1), (E). Let $r\in\{0,\ldots, d\}$, $\kappa=1,2$ and $g$ be a weight function. Then, on $\Omega_T^{\le r}$, it holds that
\begin{equation}\label{eqn: identification2b}
 \sqrt{u_n}  \sum_{i=0}^{[T/3du_n]-1} \left(\Gamma_r(\sigma_i^{n,\kappa}, \widetilde \sigma, v_i^{n,\kappa}, b_i^{n,\kappa}, \Sigma^n, g^n,\kappa) - \Gamma_r(\sigma_i^{n,\kappa}, \widetilde \sigma, v_i^{n,\kappa}, b_i^{n,\kappa}, \Sigma, g,\kappa) \right) \toop 0.
\end{equation}
\end{lem}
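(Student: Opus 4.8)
The plan is to reduce the statement to the explicit polynomial structure of $\Gamma_r$ uncovered in the proof of Proposition \ref{prop1} and then to play the resulting Lipschitz estimate against the two quantitative approximations already available, namely $\Sigma^n\approx\Sigma$ and $\psi_l(g^n)\approx\psi_l(g)$. First I would record that the computation leading to \eqref{psidep} in fact proves more than is stated there: inspecting the Leibniz expansion, each of the elementary ``variables'' appearing in it is, via It\^o's isometry, a polynomial in the entries of $\alpha,\beta,\gamma,a,\varphi$ whose coefficients are polynomials in $\psi_1(g),\ldots,\psi_4(g)$. Consequently, for fixed $r$ and $\kappa$, the map
\[
(\underline u,\psi_1,\psi_2,\psi_3,\psi_4)\longmapsto\Gamma_r(\underline u,g,\kappa),\qquad \underline u=(\alpha,\beta,\gamma,a,\varphi),
\]
is a single fixed polynomial $P_{r,\kappa}$ in the entries of $\underline u$ and in $\psi_1,\ldots,\psi_4$; in particular it is $C^\infty$ and hence Lipschitz on every compact set. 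The Gaussianity in (E) enters only through these moment computations.

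Next I would confine all arguments to one fixed compact set. Under (A1) the processes $\sigma,v,b$ are uniformly bounded in $(\omega,t)$, so $\sigma_i^{n,\kappa},v_i^{n,\kappa},b_i^{n,\kappa}$ lie in a fixed bounded set a.s., uniformly in $i,n,\kappa$; the matrix $\widetilde\sigma$ is fixed; and both $\Sigma$ and $\Sigma^n$ are bounded since $\Sigma^n=(1+o(\Delta_n^{1/6}))\Sigma$ by \eqref{Sigma^n} and \eqref{kn}. Likewise $\psi_l(g)$ and $\psi_l(g^n)$ remain bounded by Remark \ref{rem4}. Hence there is a compact set $K_0$ containing all argument tuples occurring in \eqref{eqn: identification2b}, and I let $L$ be the Lipschitz constant of $P_{r,\kappa}$ on $K_0$, a fixed number independent of $n$. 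Since the two tuples in \eqref{eqn: identification2b} agree in the $\alpha,\beta,\gamma,a$ slots and differ only in $\varphi$ ($\Sigma^n$ versus $\Sigma$) and in the $\psi$-slots ($\psi_l(g^n)$ versus $\psi_l(g)$), I obtain, uniformly in $i$,
\[
\left|\Gamma_r(\sigma_i^{n,\kappa},\widetilde\sigma,v_i^{n,\kappa},b_i^{n,\kappa},\Sigma^n,g^n,\kappa)-\Gamma_r(\sigma_i^{n,\kappa},\widetilde\sigma,v_i^{n,\kappa},b_i^{n,\kappa},\Sigma,g,\kappa)\right|\le L\Big(\|\Sigma^n-\Sigma\|+\sum_{l=1}^4|\psi_l(g^n)-\psi_l(g)|\Big).
\]
By the closing remark of Lemma \ref{lemma: law} the first term is $\|\Sigma^n-\Sigma\|=o(\Delta_n^{1/6})$, and by Remark \ref{rem4} (together with \eqref{psi_1(gn)} for $l=1$) the second is $O(k_n^{-1})$; both are deterministic and uniform in $i$.

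It then remains to sum. There are $[T/3du_n]=O(u_n^{-1})$ summands, each bounded by $L\big(o(\Delta_n^{1/6})+O(k_n^{-1})\big)$, so the absolute value of the left-hand side of \eqref{eqn: identification2b} is at most
\[
\sqrt{u_n}\cdot O(u_n^{-1})\cdot L\big(o(\Delta_n^{1/6})+O(k_n^{-1})\big)=O(u_n^{-1/2})\big(o(\Delta_n^{1/6})+O(k_n^{-1})\big).
\]
Inserting $u_n=k_n\Delta_n$ and $k_n\sim\theta\Delta_n^{-2/3}$ from \eqref{kn}, so that $u_n\sim\theta\Delta_n^{1/3}$ and $u_n^{-1/2}=O(\Delta_n^{-1/6})$, the two contributions become $\Delta_n^{-1/6}o(\Delta_n^{1/6})=o(1)$ and $\Delta_n^{-1/6}O(\Delta_n^{2/3})=O(\Delta_n^{1/2})$, respectively. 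Both tend to $0$, and since the bound is deterministic this gives the asserted convergence $\toop0$ on $\Omega_T^{\le r}$ (in fact everywhere under (A1)).

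The only genuinely delicate point, and the step I would verify most carefully, is precisely this balancing of the blow-up $u_n^{-1/2}=O(\Delta_n^{-1/6})$ against the two approximation errors. It is here that the sub-optimal window choice $k_n=O(\Delta_n^{-2/3})$ in \eqref{kn} pays off: it makes $u_n^{-1/2}k_n^{-1}=O(\Delta_n^{1/2})$, and it forces the sharp factor $o(\Delta_n^{1/6})$ in $\Sigma^n=(1+o(\Delta_n^{1/6}))\Sigma$ to beat $\Delta_n^{-1/6}$. Had the covariance approximation only been of order $O(1)$ rather than $o(\Delta_n^{1/6})$, the argument would break down, so the precise rate in \eqref{kn} and the second identity in \eqref{psi_1(gn)} are both essential to the proof.
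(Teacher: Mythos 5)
Your proof is correct and takes essentially the same route as the paper's: both exploit the polynomial (hence locally Lipschitz) dependence of $\Gamma_r$ on $(\underline u,\psi_1(g),\ldots,\psi_4(g))$ extracted from the proof of Proposition \ref{prop1}(i), combine it with the rates $\Sigma^n-\Sigma=o(\Delta_n^{1/6})$ and $\psi_l(g^n)-\psi_l(g)=O(k_n^{-1})$, and then balance this uniform bound against the $O(u_n^{-1/2})=O(\Delta_n^{-1/6})$ blow-up from summing and normalizing. The only cosmetic difference is that the paper obtains the compact-set estimate via a first-order Taylor expansion and phrases the final step through conditional expectations under (A1), whereas you invoke a Lipschitz constant directly and observe that the resulting bound is deterministic.
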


\begin{proof}
Fix $r\in\{0,\ldots, d\}$, $\kappa=1,2$ and a weight function $g$. Recall that by Proposition \ref{prop1}(i) for any $\uu \in \mathcal U$ there is a polynomial $\tau_{r,\uu,\kappa}$ such that $\Gamma_r(\uu,g,\kappa) = \tau_{r,\uu,\kappa}(\psi_1(g), \ldots, \psi_4(g))$. An inspection of the proof of Proposition \ref{prop1}(i) yields that the map
\[
\mathcal U \times \R^4 \to \R, \quad
(\al,\beta, \gamma, a, \Sigma, \psi_1(g), \ldots, \psi_4(g)) \mapsto \tau_{r,(\alpha,\beta, \gamma,a, \Sigma),\kappa}(\psi_1(g), \ldots, \psi_4(g))
\]
is a $C^\infty$-function. Consider the first order partial derivatives in $(\Sigma, \psi_1(g), \ldots, \psi(g))$. For fixed $\Sigma, g$, they are continuous in $(\alpha, \beta, \gamma,a)$. Therefore, by a first order Taylor expansion, we obtain that for any compact set $A\subset \mathcal M' \times \mathcal M \times \R^{dq^2} \times \R^d$
\begin{multline*}
\sup_{(\alpha, \beta, \gamma,a)\in A} | \Gamma_r(\alpha, \beta, \gamma, a, \Sigma^n, g^n, \kappa) - \Gamma_r(\alpha, \beta, \gamma, a, \Sigma, g, \kappa)| \\
 \le K_A \big\|(\Sigma^n, \psi_1(g^n), \ldots, \psi_4(g^n)) - (\Sigma, \psi_1(g), \ldots, \psi_4(g)) \big\|_{\R^{d^2}\times \R^4},
\end{multline*}
where $\| \cdot \|_{\R^{d^2}\times \R^4}$ is the Euclidean norm in $\R^{d^2}\times \R^4$. Combining \eqref{Sigma^n} and \eqref{kn} we get that $(\Sigma^n)_{ij} - \Sigma_{ij} = o(\Delta_n^{1/6})$, $i,j=1,\ldots, d$, and with \eqref{g^n}, \eqref{psi_1(gn)}, we have that $\psi_l(g^n) - \psi_l(g) = O(k_n^{-1})$, $l=1,\ldots, 4$. Again using \eqref{kn} this implies that 
\[
\big\|(\Sigma^n, \psi_1(g^n), \ldots, \psi_4(g^n)) - (\Sigma, \psi_1(g), \ldots, \psi_4(g)) \big\|_{\R^{d^2}\times \R^4} = o(\Delta_n^{1/6}).
\]
Now, we apply assumption (A1) to deduce that 
\[
\sup_{s\in[0,T]} \E \big[|\Gamma_r(\sigma_s, \widetilde \sigma, v_s, b_s, \Sigma^n, g^n, \kappa) - \Gamma_r(\sigma_s, \widetilde \sigma, v_s, b_s, \Sigma, g, \kappa) |\,\big | \,\mathcal F_s\big] = o(\Delta_n^{1/6}),
\]
and hence
\begin{multline*}
 \sqrt{u_n}  \sum_{i=0}^{[T/3du_n]-1} \E \big[|\Gamma_r(\sigma_i^{n,\kappa}, \widetilde \sigma, v_i^{n,\kappa}, b_i^{n,\kappa}, \Sigma^n, g^n,\kappa)\\ - \Gamma_r(\sigma_i^{n,\kappa}, \widetilde \sigma, v_i^{n,\kappa}, b_i^{n,\kappa}, \Sigma, g,\kappa) |\,\big | \,\mathcal F_i^{n,\kappa} \big] 
  = o\left(\frac{\Delta_n^{1/6}}{\sqrt{u_n}}\right) = o(1),
\end{multline*}
which implies \eqref{eqn: identification2b}.
\end{proof}

The next lemma deals with the stable convergence at \eqref{eqn: stable}.

\begin{lem}\label{lemma: Jacod}
Assume (A1), (E). Let $r\in\{0,\ldots, d\}$ and $g,h$ be two weight function satisfying the conditions of Proposition \ref{prop1}(ii). Then, on $\Omega_T^{\le r}$, we have the stable convergence
\begin{equation*}
U''(r,g,h)_T^{n} \stab \mathcal{MN}(0,V(r,g,h)_T),
\end{equation*}
where $V(r,g,h)_T$ is defined at \eqref{V} and the two-dimensional statistic $U''(r,g,h)_T^{n} = (U''(r,g,h)_T^{n,1}, U''(r,g,h)_T^{n,2})$ is given after \eqref{eqn: stable}.
\end{lem}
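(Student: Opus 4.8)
The plan is to realize $U''(r,g,h)_T^n$ as a sum of a bivariate triangular array of martingale differences and to apply a stable central limit theorem for such arrays, namely \cite[Theorem IX.7.28]{JS02}. I would write $U''(r,g,h)_T^n = \sum_{i=0}^{[T/3du_n]-1}(\chi_i^{n,1},\chi_i^{n,2})$ with
\[
\chi_i^{n,1} = 3d\sqrt{u_n}\Big(\gamma_r(A(g)_i^{n,1},B(g)_i^{n,1})^2 - \Gamma_r(\sigma_i^{n,1},\widetilde\sigma,v_i^{n,1},b_i^{n,1},\Sigma^n,g^n,1)\Big),
\]
and $\chi_i^{n,2}$ defined analogously with $(g,1)$ replaced by $(h,2)$, recalling $\Sigma^n$ from \eqref{Sigma^n}. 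The natural discrete filtration is the big-block filtration $(\mathcal F_i^{n,1})_i$ from \eqref{eqn: filtrations}. Throughout one works on $\Omega_T^{\le r}$, where $\rank(\sigma_i^{n,\kappa})\le r$ and hence $\rank(A(g)_i^{n,\kappa})\le r$, so that $\gamma_r$ is the relevant quantity. It then remains to verify the four hypotheses of the stable CLT: conditional centering, convergence of the conditional covariances to the $\mathcal G$-measurable matrix $V(r,g,h)_T$ of \eqref{V}, a conditional Lyapunov bound, and the asymptotic orthogonality conditions that yield stability together with the mixed-normal structure.

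Conditional centering is immediate from \eqref{conditional exp}: each summand has vanishing $\mathcal F_i^{n,\kappa}$-conditional mean, and since $\mathcal F_i^{n,1}\subset\mathcal F_i^{n,2}$ both components are centered given $\mathcal F_i^{n,1}$. For the conditional covariance I would first observe that the off-diagonal term vanishes identically: the $\kappa=1$ and $\kappa=2$ sub-blocks of big block $i$ occupy disjoint time intervals, so $\chi_i^{n,1}$ is $\mathcal F_i^{n,2}$-measurable, whence $\E[\chi_i^{n,1}\chi_i^{n,2}\mid\mathcal F_i^{n,1}] = \E[\chi_i^{n,1}\,\E[\chi_i^{n,2}\mid\mathcal F_i^{n,2}]\mid\mathcal F_i^{n,1}]=0$. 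The diagonal entries follow from \eqref{conditional var}, which gives $\E[(\chi_i^{n,\kappa})^2\mid\mathcal F_i^{n,\kappa}] = 9d^2u_n\,\Gamma'_r(\sigma_i^{n,\kappa},\widetilde\sigma,v_i^{n,\kappa},b_i^{n,\kappa},\Sigma^n,g^n,\kappa)$ (with $h^n$ for $\kappa=2$). Summing over $i$, I would replace $\Gamma'_r(\cdots,\Sigma^n,g^n,\kappa)$ by $\Gamma'_r(\cdots,\Sigma,g,\kappa)$ exactly as in Lemma \ref{lemma: identification}; here the replacement is easier, since the per-term error from Proposition \ref{prop1}(i) together with $\Sigma^n\to\Sigma$ and $\psi_l(g^n)\to\psi_l(g)$ is multiplied only by $u_n$ (not $\sqrt{u_n}$), so it accumulates to $u_n\sum_i o(1)=o(1)$. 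A Riemann-sum approximation using the c\`adl\`ag property of $\sigma,v,b$ then produces $3d\int_0^T\Gamma'_r\,ds$, i.e. the two diagonal entries of $V(r,g,h)_T$, which coincide by Proposition \ref{prop1}(ii).

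The Lyapunov condition $\sum_i\E[\|(\chi_i^{n,1},\chi_i^{n,2})\|^4\mid\mathcal F_i^{n,1}]\toop0$ follows from Lemma \ref{lemma: boundedness}: since $\gamma_r$ is a polynomial in the entries of $A(g)_i^{n,\kappa},B(g)_i^{n,\kappa}$, the conditional fourth moments of $\gamma_r(A,B)^2-\Gamma_r$ are bounded, so the whole sum is $O(u_n^2\cdot[T/3du_n])=O(u_n)\to0$. The genuinely delicate step is the orthogonality condition $\sum_i\E[\chi_i^{n,\kappa}\,\Delta_i^nN\mid\mathcal F_i^{n,1}]\toop0$, to be checked for $N$ ranging over the components of $W$ and over all bounded martingales orthogonal to $W$. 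For $N=W^{(l)}$ I would reuse the parity argument from the proof of \eqref{(6.18)} in Lemma \ref{lemma: 5.4}: conditionally on $\mathcal H_i^{n,\kappa}\vee\sigma(W')$ the matrices $A$ and $C$ are odd and $B$ is even in the block increment of $W$, so $\gamma_r(A,B)^2$ — hence $\chi_i^{n,\kappa}$ — is an even functional of that increment while the increment itself is odd; as the conditional law of the increment is invariant under $x\mapsto-x$, the contribution of the sub-block carrying $\chi_i^{n,\kappa}$ vanishes exactly, and the contribution of the complementary part of the big block vanishes by the martingale property, as in the off-diagonal computation. The orthogonal-complement directions are handled by the standard argument of \cite{JP13}, the structural point being that $W'$ is independent of $\mathcal G$ and supplies the Gaussian mixing, so stability holds with respect to $\mathcal G$ and the limit is $\mathcal G$-conditionally Gaussian with the $\mathcal G$-measurable variance $V(r,g,h)_T$, independent of $\mathcal G$. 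Collecting these verifications, \cite[Theorem IX.7.28]{JS02} delivers $U''(r,g,h)_T^n\stab\mathcal{MN}(0,V(r,g,h)_T)$ with the asserted diagonal covariance. I expect the orthogonality condition to be the main obstacle: the parity argument disposes of the $W$-directions cleanly, but verifying asymptotic orthogonality to the bounded martingales orthogonal to $W$, and above all correctly identifying $\mathcal G$ rather than the larger filtration carrying $W'$ and the blockwise noise increments as the reference $\sigma$-field for stability — so that the $W'$- and $\varepsilon$-driven fluctuations feed the mixed-normal limit rather than breaking stability — is where the bookkeeping requires the most care.
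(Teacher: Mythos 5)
Your overall route is the same as the paper's: realize $U''(r,g,h)_T^n$ as the bivariate array $(\xi_i^{n,1},\xi_i^{n,2})$ adapted to the big-block filtration $(\mathcal F_i^{n,1})$ and check the conditions of \cite[Theorem IX.7.28]{JS02}; your treatment of the centering (via \eqref{conditional exp} and nesting), of the vanishing off-diagonal covariance, of the Lyapunov bound (via Lemma \ref{lemma: boundedness}), and of the orthogonality to $W$ (parity of $\gamma_r(A,B)^2$ in the block increments of $W$, plus the martingale property for the part of the increment preceding the $\kappa=2$ sub-block) coincides with the paper's. The genuine gap is exactly the point you leave open, namely condition \eqref{eqn: Jacod 5 b} for bounded martingales $N$ orthogonal to $(W,W')$. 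Deferring to ``the standard argument of \cite{JP13}'' cannot work here: in \cite{JP13} the relevant filtration is generated by the processes of assumption (A) and $W'$, so orthogonal martingales can be dealt with through the It\^o representation theorem; in the present setting the filtration also carries the noise, and martingales such as $N_t=\E[\tilde f(\ep_{t_1},\ldots,\ep_{t_m})\mid \f^{(1)}_t]$ are bounded, orthogonal to $(W,W')$, and outside the scope of any representation argument. The paper closes this by following \cite[Lemma 5.7]{JLMPV09}: it suffices to verify \eqref{eqn: Jacod 5 b} on a generating class $\mathcal N^0\cup\mathcal N^1$, where for $N\in\mathcal N^0$ (martingales of the continuous filtration $\mathcal F^{(0)}$) one uses the representation theorem and the vanishing covariations, and for $N\in\mathcal N^1$ of the form \eqref{eqn: rep 1} one notes that blocks disjoint from $\{t_1,\ldots,t_m\}$ contribute zero by conditional independence, while the at most $m$ blocks meeting this set each contribute $O(\sqrt{u_n})$ by boundedness of $\tilde f$ and Lemma \ref{lemma: boundedness}, so the total is $O(m\sqrt{u_n})\to0$. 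Without this decomposition-plus-counting argument the convergence is not shown to be \emph{stable}. (Your closing worry about taking $\mathcal G$ as ``the reference $\sigma$-field for stability'' is also slightly off: stability in \cite[Theorem IX.7.28]{JS02} is relative to $\mathcal F$; $\mathcal G$ enters only in the description of the limit, which is conditionally Gaussian given $\mathcal G$ because the integrands $y,z$ in the representation $\int_0^T y_s\,dW'_s+\int_0^T z_s\,d\widetilde W_s$ are $\mathcal G$-measurable and $(W',\widetilde W)$ is independent of $\mathcal G$.)

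A second, smaller gap sits in the covariance condition: the theorem requires $\sum_i\E[(\xi_i^{n,2})^2\mid \mathcal F_i^{n,1}]$, whereas \eqref{conditional var} only yields the $\mathcal F_i^{n,2}$-conditional variance $9d^2u_n\,\Gamma'_r(\sigma_i^{n,2},\widetilde\sigma,v_i^{n,2},b_i^{n,2},\Sigma^n,h^n,2)$, which is not $\mathcal F_i^{n,1}$-measurable. Neither your appeal to Lemma \ref{lemma: identification} (which only swaps $(\Sigma^n,h^n)$ for $(\Sigma,h)$) nor a plain Riemann-sum argument covers the extra conditioning: one still has to show that $\E[\Gamma'_r(\sigma_i^{n,2},\widetilde\sigma,v_i^{n,2},b_i^{n,2},\Sigma^n,h^n,2)\mid\mathcal F_i^{n,1}]$ equals $\Gamma'_r(\sigma_i^{n,1},\widetilde\sigma,v_i^{n,1},b_i^{n,1},\Sigma^n,h^n,2)$ up to errors whose sum vanishes; the paper does this with the Burkholder estimate \eqref{Burkholder} and the multilinearity of the determinant (a Doob-type argument on the centered differences would also do). This step is routine, but it is a step, and your write-up skips it.
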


\begin{proof}
We apply a simplified version of Theorem IX.7.28 in \cite{JS02}. To this end, we introduce the two-dimensional variables $\xi_i^n$ with components
\begin{align*}
\xi_i^{n,1} &= 3d\sqrt{u_n}  \left( \gamma_r(A(g)_i^{n,1},B(g)_i^{n,1})^2 - \Gamma_r(\sigma_i^{n,1}, \widetilde \sigma, v_i^{n,1}, b_i^{n,1}, \Sigma^n, g^n,1)\right), \\
\xi_i^{n,2}&=3d\sqrt{u_n} \left( \gamma_r(A(h)_i^{n,2},B(h)_i^{n,2})^2 - \Gamma_r(\sigma_i^{n,2}, \widetilde \sigma, v_i^{n,2}, b_i^{n,2}, \Sigma^n, h^n,2)\right).\nonumber 
\end{align*}
We must prove the following five statements where $\kappa, \kappa'=1,2$:
\begin{align}
\label{eqn: Jacod 1 b}
& \sum_{i=0}^{[T/3du_n]-1} \E[\xi_i^{n,\kappa}\,|\,\mathcal F_i^{n,1}] \toop 0, \\
\label{eqn: Jacod 2 b}
& \sum_{i=0}^{[T/3du_n]-1} \E[\xi_i^{n,\kappa} \xi_i^{n,\kappa'} \,|\,\mathcal F_i^{n,1}] 
	\toop V(r,g,h)_T^{\kappa,\kappa'}, \\
\label{eqn: Jacod 3 b}
& \sum_{i=0}^{[T/3du_n]-1} \E[\xi_i^{n,\kappa} ( W^m_{3(i+1)du_n} - W^m_{3idu_n})\,|\,\mathcal F_i^{n,1}] 
	\toop 0, \\
\label{eqn: Jacod 4 b}
& \sum_{i=0}^{[T/3du_n]-1} \E[\|\xi_i^n\|^2 \mathbf{1}_{\{\|\xi_i^n\|> \epsilon\}}\,|\,\mathcal F_i^{n,1}] 
	\toop 0 \qquad \forall \epsilon >0, \\
\label{eqn: Jacod 5 b}
& \sum_{i=0}^{[T/3du_n]-1} \E[\xi_i^{n,\kappa} (N_{3(i+1)du_n} - N_{3idu_n})\,|\,\mathcal F_i^{n,1}] 
	\toop 0,
\end{align}
where $W^m$ is any of the components of $W$ and $N$ is a one-dimensional bounded martingale, orthogonal to $(W,W')$ in the sense that the covariation between $N$ and $W^m$, as well as the covariation between $N$ and $W'^m$ vanishes. We will later specify the conditions on $N$. If \eqref{eqn: Jacod 1 b} to \eqref{eqn: Jacod 5 b} hold, then Theorem IX.7.28 in \cite{JS02} yields that 
\begin{equation*}
U''(r,g,h)_T^{n} \stab \mathcal U''(r,g,h)_T,
\end{equation*}
where the random random variable $\mathcal U''(r,g,h)_T$ is defined on an extension $(\widetilde{\Omega}, \widetilde{\mathcal{F}}, \widetilde{\mathbb P})$ 
of the original probability space $(\Omega, \mathcal{F}, \mathbb P)$. It can be realized as
\begin{equation}\label{limit}
\mathcal U''(r,g,h)_T = \int_0^T y_s d W'_s + \int_0^T z_s d \widetilde W_s,
\end{equation}
where $\widetilde W$ is a $d$-dimensional Brownian motion independent of $\mathcal F$, and -- for fixed $\widetilde \sigma, \Sigma, g,h$ -- $y$ and $z$ are c\`adl\`ag processes with values in $\R^{d\times d}$ which are adapted to the filtration generated by $\sigma, b,v$. Moreover, $y$ and $z$ can be characterized by 
\begin{equation*}
\sum_{i=0}^{[T/3du_n]-1} \E[\xi_i^{n,\kappa} ( W'^m_{3(i+1)du_n} - W'^m_{3idu_n})\,|\,\mathcal F_i^{n,1}] 
	\toop \int_0^T y^m_s d s,
\end{equation*}
and
\begin{equation*}
V(r,g,h)_T = \int_0^T \big(y_sy_s^\star + z_s z_s^\star \big)ds.
\end{equation*}
Since $\widetilde W$ and $W'$ are independent of $\mathcal G$ and $y,z$ are $\mathcal G$-measurable, \eqref{limit} yields that $\mathcal U''(r,g,h)_T$ is mixed normal with $\mathcal G$-conditional mean 0 and $\mathcal G$-conditional covariance $V(r,g,h)_T$. Now, we turn to the proof of \eqref{eqn: Jacod 1 b} to \eqref{eqn: Jacod 5 b}.

(i) We use equation \eqref{conditional exp} to derive that $\E[\xi_i^{n,\kappa}\,|\,\mathcal F_i^{n,\kappa}] =0$ for $\kappa=1,2$. Using the nesting property $\mathcal F_i^{n,1} \subseteq \mathcal F_i^{n,2}$ and the tower property, we immediately obtain \eqref{eqn: Jacod 1 b}.

(ii) With equation \eqref{conditional var} one can show that
\begin{align*}\nonumber
\E[\xi_i^{n,1} \xi_i^{n,1} \,|\,\mathcal F_i^{n,1}] 
&= 9d^2 u_n \Gamma'_r(\sigma_i^{n,1}, \widetilde \sigma, v_i^{n,1}, b_i^{n,1}, \Sigma^n, g^n,1), \\
\E[\xi_i^{n,2} \xi_i^{n,2} \,|\,\mathcal F_i^{n,2}] 
&= 9d^2 u_n \Gamma'_r(\sigma_i^{n,2}, \widetilde \sigma, v_i^{n,2}, b_i^{n,2}, \Sigma^n, h^n,2).\label{cond exp}
\end{align*}
Now, we have to carefully evaluate the term $\E[  \Gamma'_r(\sigma_i^{n,2}, \widetilde \sigma, v_i^{n,2}, b_i^{n,2}, \Sigma^n, h^n,2)\,|\, \mathcal F_i^{n,1}]$.
Recall \eqref{Burkholder} which implies that 
\[
\sup_{V=\sigma, v,b} \E[\| V_i^{n,2} - V_i^{n,1} \| \,|\,\mathcal F_i^{n,1} ] \le K\sqrt{u_n}.
\]
Using the multi-linearity property of the determinant and the fact that $\Gamma'_r$ consists of determinants to the power four, we end up with
\begin{align*}
&\E[  \Gamma'_r(\sigma_i^{n,2}, \widetilde \sigma, v_i^{n,2}, b_i^{n,2}, \Sigma^n, h^n,2)\,|\, \mathcal F_i^{n,1}] \\
&= \E[  \Gamma'_r(\sigma_i^{n,1} +(\sigma_i^{n,2} - \sigma_i^{n,1}), \widetilde \sigma, v_i^{n,1} + (v_i^{n,2} - v_i^{n,1}), b_i^{n,1} + (b_i^{n,2} - b_i^{n,1}), \Sigma^n, h^n,2)\,|\, \mathcal F_i^{n,1}] \\
&= \Gamma'_r(\sigma_i^{n,1}, \widetilde \sigma, v_i^{n,1}, b_i^{n,1}, \Sigma^n, h^n,2)+ O(u_n^2).
\end{align*}
Hence, 
\[
\sum_{i=0}^{[T/3du_n]-1} \left(\E[\xi_i^{n,2}\xi_i^{n,2}\,|\,\mathcal F_i^{n,1}] - \Gamma'_r(\sigma_i^{n,1}, \widetilde \sigma, v_i^{n,1}, b_i^{n,1}, \Sigma^n, h^n,2)\right) \toop 0.
\]
Since $\xi_i^{n,1}$ is $\mathcal F_i^{n,2}$-measurable, $\mathcal F_i^{n,1} \subseteq \mathcal F_i^{n,2}$ and $\E[\xi_i^{n,2}\,|\,\mathcal F_i^{n,2}] =0$, we can deduce that $\E[\xi_i^{n,1} \xi_i^{n,2} \,|\,\mathcal F_i^{n,1}] =0$.
It follows along the lines of the proof of Lemma \ref{lemma: identification} that 
\begin{gather*}
9d^2 u_n  \sum_{i=0}^{[T/3du_n]-1} \left( \Gamma'_r(\sigma_i^{n,1}, \widetilde \sigma, v_i^{n,1}, b_i^{n,1}, \Sigma^n, g^n,1) - \Gamma'_r(\sigma_i^{n,1}, \widetilde \sigma, v_i^{n,1}, b_i^{n,1}, \Sigma, g,1)\right) \toop 0, \\
9d^2 u_n  \sum_{i=0}^{[T/3du_n]-1} \left( \Gamma'_r(\sigma_i^{n,1}, \widetilde \sigma, v_i^{n,1}, b_i^{n,1}, \Sigma^n, h^n,2) - \Gamma'_r(\sigma_i^{n,1}, \widetilde \sigma, v_i^{n,1}, b_i^{n,1}, \Sigma, h,2)\right) \toop 0.
\end{gather*}
And by a Riemann approximation argument similar to the one used to show \eqref{eqn: riemann}, one can deduce that
\begin{gather*}
9d^2 u_n  \sum_{i=0}^{[T/3du_n]-1} \Gamma'_r(\sigma_i^{n,1}, \widetilde \sigma, v_i^{n,1}, b_i^{n,1}, \Sigma, g,1) - 3d  \int_0^T \Gamma'_r(\sigma_s, \widetilde{\sigma}, v_s,b_s, \Sigma ,  g,1) ds \toop 0, \\
9d^2 u_n  \sum_{i=0}^{[T/3du_n]-1} \Gamma'_r(\sigma_i^{n,1}, \widetilde \sigma, v_i^{n,1}, b_i^{n,1}, \Sigma, h,2) - 3d  \int_0^T \Gamma'_r(\sigma_s, \widetilde{\sigma}, v_s,b_s, \Sigma ,  h,2) ds \toop 0
\end{gather*}
which gives \eqref{eqn: Jacod 2 b}.

(iii) We will show \eqref{eqn: Jacod 3 b} by proving that 
\begin{gather}\label{eqn: Jacod 3 b proof 1}
\E[\xi_i^{n,1} (W^m_{3(i+1)du_n} - W^m_{3idu_n})\,|\,\mathcal H_i^{n,1}\vee \sigma(W')] =0, \\
\E[\xi_i^{n,2} (W^m_{3(i+1)du_n} - W^m_{(3i+1)du_n})\,|\,\mathcal H_i^{n,2}\vee \sigma(W')] =0.\label{eqn: Jacod 3 b proof 2}
\end{gather}
Indeed, for $\kappa=1$, \eqref{eqn: Jacod 3 b proof 1} directly implies \eqref{eqn: Jacod 3 b}. For $\kappa=2$, we use the relationship
\begin{multline*}
\E[\xi_i^{n,1} (W^m_{3(i+1)du_n} - W^m_{3idu_n})\,|\,\mathcal F_i^{n,1}] 
= \E\big[ (W^m_{(3i+1)du_n} - W^m_{3idu_n} \E[ \xi_i^{n,2} \,|\, \mathcal F_i^{n,2}]
	\,\big|\,\mathcal F_i^{n,1}\big] \\
 + \E\big[ \E[ (W^m_{3(i+1)du_n} - W^m_{(3i+1)du_n} \xi_i^{n,2} \,|\, \mathcal H_i^{n,2}\vee \sigma(W')] \,\big|\,\mathcal F_i^{n,1}\big] .
\end{multline*}
Since $\E[ \xi_i^{n,2} \,|\, \mathcal F_i^{n,2}]=0$, showing \eqref{eqn: Jacod 3 b proof 2} implies \eqref{eqn: Jacod 3 b} in this case.
Similar to the proof of Lemma \ref{lemma: 5.4} one 
can write $\xi_i^{n,\kappa}$ as function of the form
\begin{equation*}
\label{eqn: function Phi}
\Phi \left(\omega, (W(\omega)_{(3i + \kappa-1)du_n + t} - W(\omega)_{(3i + \kappa-1)du_n})_{t\ge 0}\right),
\end{equation*}
where $\Phi$ is a $(\mathcal H_i^{n,\kappa}\vee \sigma(W')) \otimes \mathcal C^q $-measurable function on $\Omega \times C(\R_+, \R^q)$. We have already seen that $A(g)_{i,j}^{n,\kappa}$ and $B(g)_{i,\kappa}^{n,1}$ can also be considered as function of the form \eqref{eqn: function Phi} where $A(g)_{i,j}^{n,\kappa}$ is an odd function and $B(g)_{i,j}^{n,\kappa}$ is an even function. Since  $\xi_i^{n,\kappa}$ consists of squared determinants, the function $\Phi$ in \eqref{eqn: function Phi} is always even in the sense that $\Phi(\omega,(x_1,\ldots, x_q)) = \Phi(\omega,-(x_1,\ldots, x_q))$, no matter if $r$ is even or odd. Consequently the map
\[
(x_1,\ldots, x_q) \mapsto x_m \Phi(\omega,(x_1,\ldots, x_q))
\]
is odd such that \eqref{eqn: Jacod 3 b proof 1}, \eqref{eqn: Jacod 3 b proof 2} follow by a standard argument. 

(iv) Lemma \ref{lemma: boundedness} implies that $\E[\| \xi_i^n\|^4\,|\, \mathcal F_i^{n,1}] \le Ku_n^2$,
such that \eqref{eqn: Jacod 4 b} follows by a standard argument.

(v) The proof of \eqref{eqn: Jacod 5 b} is somewhat more involved than the previous steps. First, we introduce two filtrations: $(\mathcal F^{(0)}_t)_{t\in[0,T]}$ which is generated by all processes appearing in assumption (A) plus the Brownian motion $W'$. In contrast, the filtration $(\mathcal F^{(1)}_t)_{t\in[0,T]}$ is generated by the noise process $\ep$ only. Note that due to assumption (E), $\mathcal F^{(0)}_t$ and $\mathcal F^{(1)}_t$ are independent. Following the proof of \cite[Lemma 5.7]{JLMPV09}, it is sufficient to show \eqref{eqn: Jacod 5 b} for all one-dimensional bounded martingales in a set $\mathcal N = \mathcal N^0 \cup \mathcal N^1$. Here, $\mathcal N^0$ consists of all $(\mathcal F^{(0)}_t)$-martingales which are orthogonal to $(W,W')$. The set $\mathcal N^1$ comprises all $(\mathcal F^{(1)}_t)$-L\'evy-martingales $N$, such that there exists an integer $m\ge1$, time points $0\le t_1 < \cdots <t_m \le T$ and a bounded Borel-function $\tilde f\colon (\R^d)^m \to \R$ with the relation
\begin{equation}\label{eqn: rep 1}
N_t = \E[N_\infty\,|\, \mathcal F^{(1)}_t ] , \qquad N_\infty = \tilde f(\ep_{t_1}, \ldots, \ep_{t_m}).
\end{equation}
Let $N\in\mathcal N^0$. With a similar argumentation like in point (iii), \eqref{eqn: Jacod 5 b} follows by proving that 
\begin{equation}\label{eqn: Jacod 5 b proof}
\E[\xi_i^{n,\kappa} (N_{3(i+1)du_n} - N_{(3i+\kappa-1)du_n})\,|\,\mathcal H_i^{n,\kappa}] =0.
\end{equation}
By assumption, $N$ is independent of $\ep$ so $N$ is also orthogonal to $(W, W')$ conditionally on $\mathcal H_i^{n,\kappa}$. The variable $\xi_i^{n,\kappa}$ can be considered as a $\mathcal H_i^{n,\kappa} \otimes \mathcal C^q\otimes \mathcal C^d$-measurable function on $\Omega\times C(\R_+,\R^q)\times C(\R_+,\R^d)$ of the form
\begin{equation*}
\Phi \left(\omega, (W(\omega)_{(3i+\kappa-1)du_n + t} - W(\omega)_{(3i+\kappa-1)du_n})_{t\ge 0}, 
(W'(\omega)_{(3i+\kappa-1)du_n + t} - W'(\omega)_{(3i+\kappa-1)du_n})_{t\ge 0}\right).
\end{equation*}
By virtue of the representation theorem (see \cite[Proposition V.3.2]{Yor}), we can -- conditionally on $\mathcal H_i^{n,\kappa}$ -- write $\Phi$ as the sum of a constant and a stochastic integral over the interval $((3i+\kappa-1)du_n, 3(i+1)du_n]$ with respect to $(W,W')$ for a suitable $(q+d)$-dimensional predictable integrand. Then, thanks to the It\^o-isometry and the fact that the covariation of $N$ and any component of $(W,W')$ vanishes, one ends up with \eqref{eqn: Jacod 5 b proof}.

\noindent Now, let $N\in \mathcal N^1$ with the representation \eqref{eqn: rep 1}. If $\{t_1, \ldots, t_m\} \cap (3idu_n, 3(i+1)du_n] = \emptyset$, then $\xi_i^{n,\kappa}$ and $(N_{3(i+1)du_n} - N_{3idu_n})$ are independent conditionally on $\mathcal F_i^{n,1}$, so we obtain that $\E[\xi_i^{n,\kappa} (N_{3(i+1)du_n} - N_{3idu_n})\,|\,\mathcal F_i^{n,1}] =0$.
If $\{t_1, \ldots, t_m\} \cap (3idu_n, 3(i+1)du_n] \neq \emptyset$, the fact that $\tilde f$ is bounded plus Lemma \ref{lemma: boundedness} imply that 
\[
\E[\,|\xi_i^{n,\kappa} (N_{3(i+1)du_n} - N_{3idu_n})|\,|\,\mathcal F_i^{n,1}] \le K \sqrt{u_n}.
\]
Since the intervals $(3idu_n, 3(i+1)du_n] $ are disjoint for different $i$, the number of such intervals having a non-empty intersection with $\{t_1, \ldots, t_m\} $ is bounded by $m$. Consequently, we end up with
\[
\sum_{i=0}^{[T/3du_n]-1} \E[\,|\xi_i^{n,\kappa} (N_{3(i+1)du_n} - N_{3idu_n})|\,|\,\mathcal F_i^{n,1}] \le mK\sqrt{u_n},
\]
which gives us \eqref{eqn: Jacod 5 b}. This completes the proof of Lemma \ref{lemma: Jacod} and therefore the proof of Theorem \ref{th2}.
\end{proof}

The proof of Proposition \ref{prop2} is somewhat simpler in comparison to the proof of Theorem \ref{th2}. Regarding Lemma \ref{lemma: 5.4}, part (i) of Proposition \ref{prop2} follows by showing the following lemma.

\begin{lem}
Assume (A1), (E). Let $r\in\{0,\ldots, d\}$, $\kappa, \kappa'=1,2$ and $g,h$ be any weight functions. Then, on $\Omega_T^{\le r}$, we have that 
\begin{multline} \label{eqn: Variance}
V(r,g,h)_T^{n,\kappa\kappa'}  \\ 
\toop 
\begin{cases}
3d \int_0^T \Gamma'_r(\sigma_s, \widetilde \sigma, v_s, b_s, \Sigma, g, 1) 
	+ \Gamma_r(\sigma_s, \widetilde \sigma, v_s, b_s, \Sigma, g, 1)^2 ds, &\text{if } \kappa= \kappa'=1, \\[1.5ex]
3d \int_0^T \Gamma'_r(\sigma_s, \widetilde \sigma, v_s, b_s, \Sigma, h, 2) 
	+ \Gamma_r(\sigma_s, \widetilde \sigma, v_s, b_s, \Sigma, h, 2)^2 ds, &\text{if } \kappa= \kappa'=2, \\[1.5ex]
3d \int_0^T \Gamma_r(\sigma_s, \widetilde \sigma, v_s, b_s, \Sigma, g, 1) 
	\Gamma_r(\sigma_s, \widetilde \sigma, v_s, b_s, \Sigma, h, 2) ds, &\text{if } \kappa=1, \kappa'=2.
\end{cases}
\end{multline}
\end{lem}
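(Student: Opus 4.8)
The plan is to prove \eqref{eqn: Variance} in all three cases by the classical two-step law of large numbers for triangular arrays: first replace $V(r,g,h)_T^{n,\kappa\kappa'}$ by its $\mathcal F_i^{n,1}$-conditional projection, checking that the difference is a sum of martingale differences vanishing in $L^2$, and then identify the limit of the projection. Write $\chi(g)_i^{n,\kappa}:=\gamma_r(A(g)_i^{n,\kappa},B(g)_i^{n,\kappa})^2$, so that by \eqref{eqn: V} each of the three statistics has the form $9d^2u_n\sum_{i=0}^{[T/3du_n]-1}\rho_i^n$, with $\rho_i^n$ equal to $(\chi(g)_i^{n,1})^2$, $(\chi(h)_i^{n,2})^2$, or $\chi(g)_i^{n,1}\chi(h)_i^{n,2}$. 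Set $\mu_i^n:=\E[9d^2u_n\rho_i^n\mid\mathcal F_i^{n,1}]$ and $\Xi_i^n:=9d^2u_n\rho_i^n-\mu_i^n$. Since block $i$ uses only increments of $W,W'$ and values of $\ep$ on $[3idu_n,(3i+2)du_n]$, the variable $\Xi_i^n$ is $\mathcal F_{i+1}^{n,1}$-measurable with $\E[\Xi_i^n\mid\mathcal F_i^{n,1}]=0$, so the $\Xi_i^n$ form martingale differences. Hence, exactly as in \eqref{Doob}, $\E[(\sum_i\Xi_i^n)^2]=\sum_i\E[(\Xi_i^n)^2]$, and by the Cauchy--Schwarz inequality and the moment bounds of Lemma \ref{lemma: boundedness} (which yield $\E[\gamma_r(A,B)^{8}\mid\mathcal F_i^{n,\kappa}]\le K$, as $\gamma_r$ is a polynomial of degree $d$ in the entries of its arguments) we obtain $\E[(\Xi_i^n)^2]\le Ku_n^2$; summing over the $[T/3du_n]=O(u_n^{-1})$ blocks gives $O(u_n)\to0$. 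It therefore suffices to analyse $\sum_i\mu_i^n$, and since the computations below hold pathwise, the asserted limits hold on all of $\Omega$, in particular on $\Omega_T^{\le r}$.

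For the two diagonal cases $\kappa=\kappa'$ the projection is immediate from \eqref{conditional exp} and \eqref{conditional var}: taking $\rho_i^n=(\chi(g)_i^{n,1})^2$ we get $\E[(\chi(g)_i^{n,1})^2\mid\mathcal F_i^{n,1}]=\Var[\chi(g)_i^{n,1}\mid\mathcal F_i^{n,1}]+(\E[\chi(g)_i^{n,1}\mid\mathcal F_i^{n,1}])^2=\Gamma'_r(\sigma_i^{n,1},\widetilde\sigma,v_i^{n,1},b_i^{n,1},\Sigma^n,g^n,1)+\Gamma_r(\sigma_i^{n,1},\widetilde\sigma,v_i^{n,1},b_i^{n,1},\Sigma^n,g^n,1)^2$, and analogously for $\kappa=\kappa'=2$ with $h$, $\sigma_i^{n,2},v_i^{n,2},b_i^{n,2}$. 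I would then remove the discretization: replacing $(\Sigma^n,g^n)$ by $(\Sigma,g)$ costs, by the smoothness of $\Gamma_r,\Gamma'_r$ in $(\psi_1,\ldots,\psi_4,\Sigma)$ from Proposition \ref{prop1}(i), together with $\Sigma^n-\Sigma=o(\Delta_n^{1/6})$ (see \eqref{Sigma^n}, \eqref{kn}) and $\psi_l(g^n)-\psi_l(g)=O(k_n^{-1})$ (see \eqref{psi_1(gn)}), an error of $o(\Delta_n^{1/6})$ per block uniformly; multiplied by $9d^2u_n$ and summed over $O(u_n^{-1})$ blocks this is $o(\Delta_n^{1/6})\to0$, which is the argument of Lemma \ref{lemma: identification} now without the $u_n^{-1/2}$ factor. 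Finally, as the block times $(3i+\kappa-1)du_n$ are spaced by $3du_n$ and $\sigma,v,b$ are c\`adl\`ag, a Riemann-sum argument as in \eqref{eqn: riemann} gives $9d^2u_n\sum_i[\Gamma'_r+\Gamma_r^2](\sigma_i^{n,1},\ldots,\Sigma,g,1)=3d\sum_i(3du_n)[\Gamma'_r+\Gamma_r^2]_{t_i}\toop 3d\int_0^T[\Gamma'_r(\sigma_s,\widetilde\sigma,v_s,b_s,\Sigma,g,1)+\Gamma_r(\sigma_s,\widetilde\sigma,v_s,b_s,\Sigma,g,1)^2]\,ds$, the first case of \eqref{eqn: Variance}; the case $\kappa=\kappa'=2$ is identical with $h$.

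The main obstacle is the cross term $\kappa=1,\kappa'=2$, where the two factors sit on distinct sub-blocks at distinct frequencies and the coefficients $\sigma_i^{n,2},v_i^{n,2},b_i^{n,2}$ entering the second factor depend on the increments of the first sub-block. Here I would follow step (ii) of the proof of Lemma \ref{lemma: Jacod}: using the nesting $\mathcal F_i^{n,1}\subset\mathcal F_i^{n,2}$ and that $\chi(g)_i^{n,1}$ is $\mathcal F_i^{n,2}$-measurable (its sub-block lies in $[3idu_n,(3i+1)du_n]$ and $W'$ is adapted to $(\mathcal F_t)$), condition first on $\mathcal F_i^{n,2}$ and apply \eqref{conditional exp} to the second factor, whose randomness on the second sub-block is independent of $\mathcal F_i^{n,2}$, to obtain $\E[\chi(g)_i^{n,1}\chi(h)_i^{n,2}\mid\mathcal F_i^{n,2}]=\chi(g)_i^{n,1}\,\Gamma_r(\sigma_i^{n,2},\widetilde\sigma,v_i^{n,2},b_i^{n,2},\Sigma^n,h^n,2)$. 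Freezing the coefficients at time $3idu_n$ through the Burkholder--Gundy estimate \eqref{Burkholder} (so that $\Gamma_r(\sigma_i^{n,2},\ldots,h^n,2)=\Gamma_r(\sigma_i^{n,1},\ldots,h^n,2)+O(\sqrt{u_n})$ in $L^1$, exactly as in Lemma \ref{lemma: Jacod}(ii)) and taking the $\mathcal F_i^{n,1}$-conditional expectation, the $O(\sqrt{u_n})$ remainder contributes $9d^2u_n\cdot O(u_n^{-1})\cdot O(\sqrt{u_n})=O(\sqrt{u_n})\to0$ after summation (by Cauchy--Schwarz and Lemma \ref{lemma: boundedness}), while the leading term factorizes by \eqref{conditional exp} into $\mu_i^n=9d^2u_n\,\Gamma_r(\sigma_i^{n,1},\ldots,g^n,1)\,\Gamma_r(\sigma_i^{n,1},\ldots,h^n,2)$. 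The same discretization-removal and Riemann-sum steps as above then deliver $3d\int_0^T\Gamma_r(\sigma_s,\widetilde\sigma,v_s,b_s,\Sigma,g,1)\,\Gamma_r(\sigma_s,\widetilde\sigma,v_s,b_s,\Sigma,h,2)\,ds$, completing the proof of \eqref{eqn: Variance}.
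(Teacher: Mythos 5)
Your proof is correct and takes essentially the same route as the paper's: the paper likewise splits each summand $\rho_i^n$ into its $\mathcal F_i^{n,1}$-conditional expectation plus a martingale-difference remainder killed by the orthogonality computation of \eqref{Doob}, identifies the projection through Lemma \ref{lemma: law} (i.e.\ \eqref{conditional exp}--\eqref{conditional var}), and then invokes the same nesting/Burkholder coefficient-freezing, discretization-removal and Riemann-sum steps (``just as in the proof of \eqref{eqn: Jacod 2 b}'') that you carry out explicitly for the cross term $\kappa=1,\kappa'=2$. Your write-up merely spells out details the paper delegates to earlier lemmas, including the harmless observation that the convergence actually holds on all of $\Omega$.
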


\begin{proof} 
Define the variables
\begin{equation*}
\rho(g,h)_i^{n,\kappa\kappa'} = 
\begin{cases}
\gamma_r(A(g)_i^{n,1}, B(g)_i^{n,1})^4, & \text{if } \kappa = \kappa'=1,\\
\gamma_r(A(h)_i^{n,2}, B(h)_i^{n,2})^4, & \text{if } \kappa = \kappa'=2,\\
\gamma_r(A(g)_i^{n,1}, B(g)_i^{n,1})^2 \gamma_r(A(h)_i^{n,2}, B(h)_i^{n,2})^2, & \text{if } \kappa = 1, \kappa'=2,
\end{cases}
\end{equation*}
which is the $i$th summand in the right-hand side of \eqref{eqn: V}. Define the variables
\[
\rho'(g,h)_i^{n,\kappa\kappa'}= \E[\rho(g,h)_i^{n,\kappa\kappa'}\,|\,\mathcal F_i^{n,1}], \qquad
\rho''(g,h)_i^{n,\kappa\kappa'} = \rho(g,h)_i^{n,\kappa\kappa'} - \rho'(g,h)_i^{n,\kappa\kappa'}.
\]
Using Lemma \ref{lemma: law}, we get that 
\[
\quad \rho'(g)_i^{n,\kappa} = \Gamma'_r(\sigma_i^{n,\kappa}, \widetilde \sigma, v_i^{n,\kappa}, b_i^{n,\kappa}, \Sigma^n, g^n, \kappa) + \Gamma_r(\sigma_i^{n,\kappa}, \widetilde \sigma, v_i^{n,\kappa}, b_i^{n,\kappa}, \Sigma^n, g^n, \kappa)^2.
\]
Just as in the proof of \eqref{eqn: Jacod 2 b} we can deduce that $9d^2 u_n \sum_{i=0}^{[T/3du_n]-1} \rho'(g,h)_i^{n,\kappa\kappa'}$ converges in probability to the right hand side of \eqref{eqn: Variance}. 
By construction, the sequence $(\rho''(g,h)_i^{n,\kappa\kappa'})_{i\ge0}$ is a $(\mathcal F_i^{n,1})$-martingale. Hence, we can use Doob's inequality and a calculation similar to the one in \eqref{Doob} to end up with
\[
9d^2 u_n \sum_{i=0}^{[T/3du_n]-1} \quad \rho''(g,h)_i^{n,\kappa\kappa'} \toop 0,
\]
which completes the proof of \eqref{eqn: Variance}.
\end{proof}

Part (ii) of Proposition \ref{prop2} essentially follows by the following lemma.

\begin{lem}
Assume (A1), (E). Let $r\in\{0,\ldots, d\}$ and $g,h$ be two weight function satisfying the conditions of Proposition \ref{prop1}(ii). Then, on $\Omega_T^{r}$, we have that
\begin{equation}\label{toop}
\frac{ \widehat R(g,h)_T^n - r}{\sqrt{u_n}} - \frac{U(r,g,h)_T^{n,1} - U(r,g,h)_T^{n,2}}{\log 2 \, S(r,g)_T^1} \toop 0.
\end{equation}
\end{lem}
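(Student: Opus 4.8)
The plan is to reduce the statement to a first-order Taylor expansion of the logarithm around the common limit $S(r,g)_T^1 = S(r,h)_T^2$, which is guaranteed by Proposition \ref{prop1}(ii), and then to combine the tightness of $U(r,g,h)_T^n$ furnished by Theorem \ref{th2} with the law of large numbers from Theorem \ref{th1}.

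First I would rewrite the estimator algebraically. Setting $a_n := u_n^{r-d} S(g)_T^{n,1}$ and $b_n := (2u_n)^{r-d} S(h)_T^{n,2}$, a direct computation from the definition \eqref{rt} gives $S(h)_T^{n,2}/S(g)_T^{n,1} = 2^{d-r} b_n/a_n$, whence the purely algebraic identity
\begin{equation*}
\widehat R(g,h)_T^n - r = \frac{\log a_n - \log b_n}{\log 2}.
\end{equation*}
By the definition \eqref{U} we have $a_n = S(r,g)_T^1 + \sqrt{u_n}\, U(r,g,h)_T^{n,1}$ and $b_n = S(r,h)_T^2 + \sqrt{u_n}\, U(r,g,h)_T^{n,2}$, and by Proposition \ref{prop1}(ii) the two centering constants coincide; denote their common value by $S$. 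On $\Omega_T^r$ we have $S>0$ by Theorem \ref{th1}, so that $a_n, b_n \geq 0$ converge to a strictly positive limit and $\widehat R(g,h)_T^n$ is well defined on an event of probability tending to one.

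Next I would Taylor-expand. Since $U(r,g,h)_T^n$ converges stably in law by Theorem \ref{th2}, it is tight, so $U(r,g,h)_T^{n,\kappa} = O_{\mathbb P}(1)$ for $\kappa=1,2$ and consequently $a_n, b_n \toop S$. By the mean value theorem, $\log a_n - \log S = \sqrt{u_n}\, U(r,g,h)_T^{n,1}/\xi_n^1$ for some $\xi_n^1$ between $S$ and $a_n$, and analogously $\log b_n - \log S = \sqrt{u_n}\, U(r,g,h)_T^{n,2}/\xi_n^2$ for some $\xi_n^2$ between $S$ and $b_n$; since $a_n, b_n \toop S$ we have $\xi_n^1, \xi_n^2 \toop S$. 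Dividing the algebraic identity by $\sqrt{u_n}$ therefore yields
\begin{equation*}
\frac{\widehat R(g,h)_T^n - r}{\sqrt{u_n}} = \frac{1}{\log 2}\left( \frac{U(r,g,h)_T^{n,1}}{\xi_n^1} - \frac{U(r,g,h)_T^{n,2}}{\xi_n^2} \right).
\end{equation*}
Subtracting the target $(U(r,g,h)_T^{n,1} - U(r,g,h)_T^{n,2})/(\log 2 \, S)$ and regrouping, the remainder equals $\frac{1}{\log 2}\big[ U(r,g,h)_T^{n,1}(1/\xi_n^1 - 1/S) - U(r,g,h)_T^{n,2}(1/\xi_n^2 - 1/S)\big]$. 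In each summand the factor $1/\xi_n^\kappa - 1/S \toop 0$ by the continuous mapping theorem, while $U(r,g,h)_T^{n,\kappa} = O_{\mathbb P}(1)$, so by Slutsky's theorem the whole remainder converges to zero in probability, which is exactly \eqref{toop}.

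The computation is routine; the genuine content lies in the two structural inputs. Without Proposition \ref{prop1}(ii) the leading constants in $\log a_n$ and $\log b_n$ would fail to cancel and $\log a_n - \log b_n$ would not be of order $\sqrt{u_n}$, so the matching pair $(g,h)$ is indispensable here. The positivity $S>0$ on $\Omega_T^r$, needed to legitimise the logarithm and its Taylor expansion, is precisely the strict inequality asserted in Theorem \ref{th1}. I expect the only delicate point to be the bookkeeping of working on $\Omega_T^r \subseteq \Omega_T^{\leq r}$ so that Theorem \ref{th2} (stated on $\Omega_T^{\leq r}$) supplies tightness while Theorem \ref{th1} (on $\Omega_T^r$) supplies the positive limit and the consistency $a_n, b_n \toop S$.
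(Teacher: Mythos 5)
Your proof is correct and follows essentially the same route as the paper: the same algebraic identity expressing $\widehat R(g,h)_T^n - r$ as a difference of logarithms around the common value $S(r,g)_T^1 = S(r,h)_T^2$ (guaranteed by Proposition \ref{prop1}(ii)), followed by a first-order expansion of the logarithm using tightness of $U(r,g,h)_T^n$ from Theorem \ref{th2} and positivity of the limit from Theorem \ref{th1}. The only cosmetic difference is that you control the remainder via the mean value theorem with a random intermediate point, whereas the paper invokes $\log(1+x) = x + O(x^2)$ directly; the inputs and the structure of the argument are identical.
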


\begin{proof}
Using the fact that $S(r,g)_T^1 = S(r,h)_T^2$, we obtain by an elementary calculation that, on $\Omega_T^r$,
\begin{equation*}
\widehat R(g,h)_T^n - r = \frac{\log(1+\sqrt{u_n} U(r,g,h)_T^{n,1}/S(r,g)_T^1) - \log(1+\sqrt{u_n} U(r,g,h)_T^{n,2}/S(r,g)_T^1)}{\log 2}.
\end{equation*}
Due to \eqref{clt} the sequence $U(r,g,h)_T^n$ is tight. By a Taylor expansion, one obtains that $\log(1+x) = x + O(x^2)$ for $|x|<1$, so we get (for $n$ sufficiently large) 
\[
\log(1+\sqrt{u_n} U(r,g,h)_T^{n,\kappa}/S(r,g)_T^1) = \sqrt{u_n} U(r,g,h)_T^{n,\kappa}/S(r,g)_T^1) + O_{\PP}(u_n).
\]
This readily implies \eqref{toop}.
\end{proof}
The continuous mapping theorem for stable convergence then implies that, on $\Omega_T^{r}$,
\begin{equation}\label{stable}
\frac{U(r,g,h)_T^{n,1} - U(r,g,h)_T^{n,2}}{\log 2 \, S(r,g)_T^1} \stab \frac{\mathcal U''(r,g,h)_T^{1} - \mathcal U''(r,g,h)_T^{2}}{\log 2 \, S(r,g)_T^1},
\end{equation}
where $\mathcal U''(r,g,h)_T$ is the limit in \eqref{clt} (see also equation \eqref{limit}). The right-hand side of \eqref{stable} is mixed normal with $\mathcal G$-conditional mean 0 and $\mathcal G$-conditional variance
\begin{equation*}\label{conditional variance}
\frac{3d \int_0^T \Gamma'_r(\sigma_s, \widetilde{\sigma}, v_s,b_s, \Sigma ,  g,1) ds +
3d \int_0^T \Gamma'_r(\sigma_s, \widetilde{\sigma}, v_s,b_s, \Sigma ,  h,2) ds}{ (S(r,g)_T^1 \, \log 2)^2}>0.
\end{equation*}
The positivity of the variance is a consequence of \eqref{eqn: positive} in Lemma \ref{lemma:Gamma}.
At this stage, \eqref{finalclt} follows by part (i) of Proposition \ref{prop2}, Theorem \ref{th1} and the delta method for stable convergence.
\qed

\subsection{Proof of Corollary \ref{corollary: testing}} \label{sec5.5}

The implication at \eqref{test 1a} is a direct consequence of the stable convergence at \eqref{finalclt}. To prove the consistency at \eqref{test 1b}, it is sufficient to show that for any $r'\neq r$ we have that
\[
\mathbb P(\mathcal{C}_{\alpha}^{n, =r} \cap \Omega_T^{r'}) \rightarrow \mathbb P(\Omega_T^{r'}).
\]
Let $\Phi$ be the right-hand side of \eqref{finalclt}. Then we have by Proposition \ref{prop2}(ii) that 
\[
\mathbb P(\mathcal{C}_{\alpha}^{n, =r} \cap \Omega_T^{r'}) - \widetilde{\mathbb{P}} \Big( \big\{ \big|\Phi + \frac{r'-r}{\sqrt{u_n V(n,T,g,h)}}\big|> z_{1-\alpha/2} \big\} \cap \Omega_T^{r'} \Big) \to0.
\]
By Proposition \ref{prop2}, Theorem \ref{th1} and Lemma \ref{lemma:Gamma}, $V(n,T,g,h)$ converges in probability to a positive-valued limit, such that $u_n V(n,T,g,h) \toop 0$ and hence
\[
\widetilde{\mathbb{P}} \Big( \big\{ \big|\Phi + \frac{r'-r}{\sqrt{u_n V(n,T,g,h)}}\big|> z_{1-\alpha/2} \big\} \cap \Omega_T^{r'} \Big) \to \mathbb P(\Omega_T^{r'}),
\]
which shows \eqref{test 1b}.
To show \eqref{test 2a}, let $A\subset \Omega_T^{\le r}$ with $\mathbb P(A)>0$. Then we obtain
\begin{multline*}
\mathbb P(\mathcal{C}_{\alpha}^{n, \le r} \,|\,A) 
= \sum_{r'\le r} \mathbb{P}(\mathcal{C}_{\alpha}^{n, \le r} \cap \Omega_T^{r'} \,|\,A)
\le  \sum_{r'\le r} \mathbb P(\mathcal{C}_{\alpha}^{n, \le r'} \cap \Omega_T^{r'} \,|\,A) \\
\to\sum_{r'\le r} \widetilde{\mathbb{P}} (\{\Phi > z_{a-\alpha} \} \cap \Omega_T^{r'} \,|\,A)
= \alpha \mathbb{P}( \Omega_T^{\le r}\,|\,A) = \alpha.
\end{multline*}
We essentially used the convergence at \eqref{finalclt} as well as the fact that $\Phi$ is independent of $\mathcal G$ and $\Omega_T^{r'} \in \mathcal G$.
The consistency result at \eqref{test 2b} follows in the same manner as \eqref{test 1b}.
\qed


\begin{thebibliography}{1}

\bibitem{AE78} D.J. Aldous and G.K. Eagleson (1978):  On mixing and stability of limit theorems.
\textit{Annals of Probability} 6(2), 325--331.

\bibitem{BGJPS06}  O.E. Barndorff-Nielsen,  S.E. Graversen,
J. Jacod, M. Podolskij and N. Shephard (2006): A central limit theorem
for realised power and bipower variations of continuous
semimartingales. In: Yu. Kabanov, R. Liptser and J. Stoyanov (Eds.),
From Stochastic Calculus to Mathematical Finance. Festschrift in
Honour of A.N. Shiryaev, Heidelberg: Springer, 2006, 33--68.

\bibitem{BHLS08} O.E. Barndorff-Nielsen, P. R. Hansen, A. Lunde and N. Shephard (2008): Designing realised kernels to measure the
ex-post variation of equity prices in the presence of noise. \textit{Econometrica} 76(6), 1481-–1536.

\bibitem{DS94} F. Delbaen and W. Schachermayer (1994): A general version of the fundamental
theorem of asset pricing. {\em Mathematische Annalen} 300, 463--520.


\bibitem{JLT08} J. Jacod, A. Lejay and D. Talay (2008):
Estimation of the Brownian dimension of a continuous It\^o process. {\em
Bernoulli}. {\bf 14}, 469-498.

\bibitem{JLMPV09} J. Jacod,  Y. Li, P. Mykland, M. Podolskij and M. Vetter (2009):
Microstructure noise in the continuous case: the pre-averaging approach. \textit{Stochastic Processes and Their Applications} 119, 2249--2276.

\bibitem{JP13} J. Jacod and M. Podolskij (2013): A test for the rank of the volatility process: The random perturbation
approach. \textit{Annals of Statistics} 41(5), 2391--2427.

\bibitem{JP12}
J. Jacod and P. Protter (2012):
{\em Discretization of processes}.
Springer-Verlag, Berlin - Heidelberg - New York.

\bibitem{JS02} J. Jacod and A.N. Shiryaev (2002):
{\em Limit theorems for stochastic
  processes}, 2nd Edition. Springer Verlag: Berlin.
  
\bibitem{PV09b} M. Podolskij and M. Vetter (2009): Bipower-type estimation in a noisy diffusion setting. 
\textit{Stochastic Processes and Their Applications} 119, 2803--2831 
  
\bibitem{PV09} M. Podolskij and M. Vetter (2009): Estimation of volatility functionals in the simultaneous presence of microstructure noise and 
jumps. \textit{Bernoulli} 15(3), 634--658. 

\bibitem{R63} A. R\'enyi (1963): On stable sequences of events. \textit{Sankhy\=a Ser. A} 25, 293--302.

\bibitem{Yor}
Revuz, D. and M. Yor (2005): 
\textit{Continuous Martingales and Brownian Motion,} 
3rd corrected edition.
Springer-Verlag, Berlin - Heidelberg - New York.
  
  \bibitem{ZMA05} L. Zhang,  P. A. Mykland, and Y. A\"it-Sahalia (2005): A tale of two time scales: determining integrated volatility with
noisy high-frequency data. \textit{Journal of the American Statistical Association} 100(472), 1394--1411.  



\end{thebibliography}
\end{document}